\providecommand{\varitem}{} 
\newenvironment{hypotheses}[1]
 {\renewcommand\varitem[1]{\item[
 \textbf{#1\arabic{enumi}\rlap{$##1$}.}]%
    \edef\@currentlabel{#1\arabic{enumi}{$##1$}}}%
  \enumerate[label=\bf{({#1\arabic*})}, ref=#1\arabic*]}
 {\endenumerate}
\newcommand{\tr}{\operatorname{tr}}
\renewcommand{\div}{\operatorname{div}}
\newcommand{\Rr}{{\mathbb{R}}}
\newcommand{\Nn}{{\mathbb{N}}}
\newcommand{\Tt}{{\mathbb{T}}}
\newcommand{\Hh}{{\overline{H}}}
\newcommand{\Ll}{{\mathcal{L}}}
\newcommand{\Dd}{{\mathcal{D}}}
\newcommand{\Mm}{{\mathcal{M}}}
\newcommand{\epsi}{\varepsilon}
\def\dx{{\rm d}x}
\def\leq{\leqslant}
\def\geq{\geqslant}
\newcommand{\aev}{{a.e.}}
\def\bfm{\boldsymbol{m}}
\def\bfeta{\boldsymbol{\eta}}
\let\weakly\rightharpoonup
\def\weaklystar{\buildrel{\hskip-.6mm\star}\over\weakly}
\newcommand{\RR}{\mathbb{R}}
\newcommand{\NN}{\mathbb{N}}
\newcommand{\ZZ}{\mathbb{Z}}
\newcommand{\EE}{\mathbb{E}}
\newcommand{\intt}{\int_{\Tt^d}}
\newcommand{\fc}{\mathfrak{c}}
\numberwithin{equation}{section}
\newtheoremstyle{thmlemcorr}{10pt}{10pt}{\itshape}{}{\bfseries}{.}{10pt}{{\thmname{#1}\thmnumber{
#2}\thmnote{ (#3)}}}
\newtheoremstyle{thmlemcorr*}{10pt}{10pt}{\itshape}{}{\bfseries}{.}\newline{{\thmname{#1}\thmnumber{
\newtheoremstyle{defi}{10pt}{10pt}{\itshape}{}{\bfseries}{.}{10pt}{{\thmname{#1}\thmnumber{
#2}\thmnote{ (#3)}}}
\newtheoremstyle{remexample}{10pt}{10pt}{}{}{\bfseries}{.}{10pt}{{\thmname{#1}\thmnumber{
#2}\thmnote{ (#3)}}}
\newtheoremstyle{ass}{10pt}{10pt}{}{}{\bfseries}{.}{10pt}{{\thmname{#1}\thmnumber{
A#2}\thmnote{ (#3)}}}
\theoremstyle{thmlemcorr}
\newtheorem{theorem}{Theorem}
\numberwithin{theorem}{section}
\newtheorem{lemma}[theorem]{Lemma}
\newtheorem{corollary}[theorem]{Corollary}
\theoremstyle{thmlemcorr*}
\newtheorem{theorem*}{Theorem}
\newtheorem{lemma*}[theorem]{Lemma}
\newtheorem{corollary*}[theorem]{Corollary}
\newtheorem{proposition*}[theorem]{Proposition}
\newtheorem{problem*}[theorem]{Problem}
\newtheorem{conjecture*}[theorem]{Conjecture}
\theoremstyle{defi}
\theoremstyle{remexample}
\newtheorem{remark}[theorem]{Remark}
\newtheorem{teo}[theorem]{Theorem}
\newtheorem{lem}[theorem]{Lemma}
\newtheorem{pro}[theorem]{Proposition}
\theoremstyle{ass}
\address{King Abdullah University
of Science
and Technology (KAUST), CEMSE Division \&
KAUST SRI, Center for
Uncertainty Quantification  in Computational
Science and Engineering,  Thuwal 23955-6900,
Saudi Arabia.}
\email[R.~Ferreira]{rita.ferreira@kaust.edu.sa}
\email[D.~Gomes]{diogo.gomes@kaust.edu.sa}
\thanks{
The authors were partially supported by KAUST baseline and start-up funds and by 
KAUST SRI, Center for Uncertainty Quantification in Computational Science and Engineering.}
\begin{document}

\title[Mean-Field Games through Variational
Inequalities]{Existence of Weak Solutions
to Stationary Mean-Field Games through Variational
Inequalities}
\author[\hfill Rita Ferreira and Diogo Gomes 
\hfill]{Rita Ferreira and Diogo Gomes}

\begin{abstract}
Here, we consider stationary monotone mean-field games (MFGs) and study the existence of weak solutions. First, we introduce a regularized problem that preserves the monotonicity. Next, using variational inequality techniques, we prove the existence of solutions to the regularized problem. Then, using Minty's method, we establish the existence of solutions for the original MFG. Finally, we examine the properties of these weak solutions in several examples. Our methods provide a general framework
to construct weak solutions to stationary MFGs 
 with local, nonlocal, or congestion terms.

\vspace{8pt}
%
\begin{center}\today\end{center}

\end{abstract}

\maketitle

\section{Introduction}

In the last years, mean-field games (MFGs) have become an active area of research in both the mathematics \cite{ll1,ll2,ll3}
and engineering communities \cite{Caines2, Caines1}.
In spite of substantial progress, many questions remain open.
Some of the more fundamental issues regard the existence and uniqueness of solutions. Various authors have attempted to answer these questions through explicit solutions and transformations
\cite{MR2928378, BP2, BP, Ge, MR2974160},
a priori estimates \cite{GMit,GPatVrt,  GP2,GPim1,
 GP13,GPim2,GPM1,GR, GM, GVrt2,
  PV15}, penalization ideas \cite{GPat2, GPat},
random-variable techniques
\cite{MR3045029, lacker1,GVrt,
lackerC, MR3113415},
weak and renormalized solutions \cite{porretta,
Por2015},
and variational methods \cite{Cd1, Cd2, cgbt, MR3358627}.
Here, we develop a new approach to investigate the existence of weak solutions to stationary MFGs using variational inequalities. 

We consider MFGs given
by the system of partial differential equations
{\setlength\arraycolsep{0.5pt}
\begin{equation}
\label{P}
\begin{cases}
\displaystyle -u -H(x, Du, D^2 u, m,h(\bfm))
=0,\\
m - \div(m D_p H(x, Du, D^2 u,m, h(\bfm)))
+ \big(m D_{M_{ij}} H (x, Du, D^2 u, m,h(\bfm)) \big)_{x_i
x_j} =1. 
\end{cases}
\end{equation}}%
For convenience,  we take periodic boundary conditions for \eqref{P} and work in the $d$-dimensional torus, $\Tt^d$, $d\in\Nn$. 
Moreover, \(i,j \in \{1, ..., d\}\) and we use the Einstein convention on repeated indices. In \eqref{P}, the Hamiltonian
{\setlength\arraycolsep{0.5pt}%
\begin{eqnarray}
\begin{aligned}\label{H}
H:\Tt^d \times \RR^d \times \RR^{d\times
d} \times \mathfrak{\mathbb{E} \times\RR} & \to\Rr \\
(x,p,M,m,\theta)
&\mapsto H(x,p,M,m,\theta) 
\end{aligned}
\end{eqnarray}}%
satisfies the regularity and growth assumptions 
discussed in Section \ref{Sect:assp}. The set $\mathbb{E}$ is either
 \(\RR^{+}\) or \(\RR^{+}_0\), and  \(h:\Mm_{ac}(\Tt^d) \to
 C(\Tt^d)\)  is a non-local operator also examined later. 
 Here, \(\Mm_{ac}(\Tt^d)\) stands for the space of  positive measures on \(\Tt^d\), absolutely continuous with respect to the Lebesgue measure. We use a boldface
font to denote elements of \(\Mm_{ac}(\Tt^d)\)
and represent their densities with the same non-boldface letter.

Our assumptions encompass a broad class of Hamiltonians that includes
 the following important examples:
 {\setlength\arraycolsep{0.5pt}%
        \begin{eqnarray}\label{H's}
        \begin{aligned}
        &H(x,p,M,m,\theta):= \bar H(x,p,m) - V(x) - g(m,\theta) - \sigma(x) \sum_{i=1}^d M_{ii},\\
        & h(\bfm):= c_1  \,\zeta * \bfm +c_2\, \zeta * \big( (\zeta*\bfm)^{\bar\alpha} \big),
        \end{aligned}
        \end{eqnarray}}%
 where 
 {\setlength\arraycolsep{0.5pt}%
        \begin{eqnarray}\label{H0's}\begin{aligned}
       \bar H(x,p,m) := a(x) (1 + |p|^2)^{\frac{\gamma}{2}} \quad \hbox{or} \quad \bar H(x,p,m) := a(x) \frac{|p|^2}{2 m^{\tau}},
        \end{aligned}
        \end{eqnarray}}%
 and 
 {\setlength\arraycolsep{0.5pt}%
        \begin{eqnarray}\label{g's}
        \begin{aligned}
        g(m,\theta):= m^\alpha + \theta
        \quad \hbox{or} \quad g(m,\theta):= \ln(m) + \theta,
        \end{aligned}
        \end{eqnarray}}%
 with $V,\, \sigma,\,\zeta,\,  a:\Tt^d \to \RR$ smooth, periodic functions such that  $\sigma\geq 0$, $\inf_{\Tt^d}
 \zeta \geq 0$, $\inf_{\Tt^d} a >0$, $c_1,\, c_2 \geq 0$,   $\gamma > 1$, $0 \leq \tau < 1$, and $\bar \alpha,\, \alpha >0$.

MFGs model problems with  large numbers of competing rational agents who seek to optimize an individual utility, see, for example, the lectures in
\cite{LCDF}.  In the time-dependent case, these games are given by a (time-dependent) Hamilton-Jacobi equation coupled with a transport or Fokker-Planck equation.  
The stationary case captures ergodic 
equilibria  and corresponds to the long-time limit of time-dependent MFGs \cite{CLLP, cllp13}. For up-to-date developments  on  MFGs,
 we refer the reader to the recent monographs \cite{bensoussan, GPV},
 the survey paper \cite{GS}, and the notes \cite{cardaliaguet}.
For numerical aspects, we recommend
\cite{achdou2013finite} and the references
therein.  

Uniformly elliptic MFGs and their corresponding
weak solutions were introduced in \cite{ll1}. 
The systematic study of the regularity 
theory for these MFGs was developed
in \cite{GPatVrt, GPM1,GM,  PV15}.
Those references establish the existence of classical solutions
for local MFGs with logarithmic nonlinearities  and power-like nonlinearities with certain growth conditions.  A particular stationary  congestion model was considered in \cite{GMit}.
Little is known about the general stationary
congestion problem.
Finally, we point out that some results
for time-dependent problems  relying on the
variational structure of MFGs  \cite{Cd1, Cd2, cgbt} and some weak solution methods \cite{porretta,
Por2015} may be extended
to the stationary case. However, to the best of
our knowledge, this has not been pursued in the literature.

In the MFGs literature, there are several gaps in the existence of solutions
that we try to address here. First, to obtain smooth solutions of local MFGs, 
 state-of-the-art methods \cite{PV15}
require growth conditions
in the non-linearities. These conditions seem to be of a technical
nature rather than of a fundamental nature. Second, regarding weak solutions, the uniformly parabolic case
with subquadratic or quadratic Hamiltonians
is well understood \cite{porretta, Por2015}.
In contrast, 
the degenerate parabolic case and  the 
uniformly parabolic with superquadratic Hamiltonians case
are  well understood only for variational problems  \cite{Cd1, Cd2, cgbt}.
Moreover, we expect
analogous results to hold for degenerate elliptic problems. 
Unfortunately, variational MFGs are
a restricted class
of problems
that is unstable under perturbations. For example, 
adding a small, non-local perturbation to a variational
MFG should not change the theory substantially, but 
it destroys the variational structure. Third,
apart from the preliminary results in \cite{GMit}
and the short-time problems in \cite{GVrt2, Graber2}, 
little is known about weak or classical solutions of congestion
models.

Here, we present a unified approach to studying
these problems and construct weak solutions based
on monotone operator methods. Monotonicity assumptions
are central in MFG theory and  are at the heart of the
uniqueness proof by Lasry and Lions \cite{LCDF}; also, the numerical methods in \cite{AFG} (for stationary MFGs) and  \cite{GJS2} (for
finite state MFGs) rely on monotonicity ideas. 
Monotone operator methods have several advantages. First, monotonicity is stable under
perturbations. Second, there is a well-developed
theory of weak solutions that, when combined with the 
Minty method, makes it possible to consider various limit
problems. Finally, our approach to MFGs through monotonicity
answers the earlier existence questions
and suggests new computational approaches. 

Next, we put forward the basic definitions. A  weak solution to \eqref{P} is a pair $(m,u)\in \mathcal{D}'
(\Tt^d) \times \mathcal{D}'(\Tt^d)$ with $m\geq 0$ 
that 
satisfies the variational inequality
\begin{eqnarray}\label{ws}
\begin{aligned}
\left\langle  \begin{bmatrix}\eta \\
v \\
\end{bmatrix}
 -  \begin{bmatrix}m \\
u \\
\end{bmatrix}, A \begin{bmatrix}\eta \\
v \\\end{bmatrix} \right\rangle_{\mathcal{D}'(\Tt^d) \times
\mathcal{D}'(\Tt^d), C^\infty(\Tt^d) \times
C^\infty(\Tt^d)} \geq 0
\end{aligned}
\end{eqnarray}
for all $(\eta, v) \in {C^\infty}(\Tt^d;\EE) \times
{C^\infty}(\Tt^d)$, where
\begin{eqnarray}\label{A}
\begin{aligned}
A \begin{bmatrix}\eta \\
v \\
\end{bmatrix}
:= \begin{bmatrix}\displaystyle -v- H(x, Dv, D^2 v, \eta, h(\bfeta)) \\
\eta - \div(\eta D_p H(x, Dv, D^2 v, \eta, h(\bfeta))) + \big(\eta
D_{M_{ij}} H (x, Dv, D^2 v, \eta,
h(\bfeta))\big)_{x_i x_j} -1  \\
\end{bmatrix}.
\end{aligned}
\end{eqnarray}%
Any solution to \eqref{P} is a weak solution. 
Moreover, suppose that $H$ is regular and let $(m,u)\in C^\infty(\Tt^d) \times C^\infty(\Tt^d)$ with $m>0$  a weak solution. A straightforward argument shows that $(m,u)$ solves \eqref{P}.
To see this, fix a pair 
$(\psi, \phi)\in C^\infty(\Tt^d) \times C^\infty(\Tt^d)$. Then, for all sufficiently small $\epsilon>0$, $m+\epsilon \psi>0$;  taking $(\eta,v)=(m+\epsilon \psi, u+\epsilon \phi)$ in \eqref{ws}, dividing by $\epsilon$, and letting $\epsilon\to 0^+$ yield
\begin{eqnarray*}
\begin{aligned}
\left\langle  \begin{bmatrix}\psi \\
\phi \\
\end{bmatrix}
 , A \begin{bmatrix}m \\
u \\
\end{bmatrix} \right\rangle_{\mathcal{D}'(\Tt^d) \times
\mathcal{D}'(\Tt^d), C^\infty(\Tt^d) \times
C^\infty(\Tt^d)} \geq 0.
\end{aligned}
\end{eqnarray*}
Because $\phi$ and $\psi$ are arbitrary, we conclude
that $(m,u)$ solves \eqref{P}.
 
Our goal is to prove the existence of weak solutions and  to
study their properties. Our main result is the following.
\begin{teo}\label{Thm:main}
Suppose that Assumptions \eqref{h}, \eqref{g},
\eqref{G:g ei}, and \eqref{bddH}--\eqref{H:mon} are satisfied. Then, there exists a weak solution, $(m,u)\in
\mathcal{D}'(\Tt^d) \times \mathcal{D}'(\Tt^d)$,
with $m\geq 0$   to \eqref{P}. Moreover, \((\bfm,u) \in \Mm_{ac}(\Tt^d) \times W^{1,\gamma} (\Tt^d)  \) and \(\int_{\Tt^d} m \,\dx = 1\).
\end{teo}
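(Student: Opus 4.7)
The plan is to prove Theorem~\ref{Thm:main} through the three-step scheme announced in the abstract: regularize the operator $A$ in \eqref{A} into a family $\{A_\epsilon\}_{\epsilon>0}$ that is monotone and coercive on a suitable reflexive space; solve each regularized variational inequality by a standard monotone-operator argument; and extract a weak limit by Minty's method.

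The first step is to replace $A$ by $A_\epsilon := A + \epsilon B$, where $B$ is a strictly monotone, coercive perturbation designed to match the natural growth of the Hamiltonian visible in \eqref{H's}--\eqref{H0's}. A natural choice acts on the second coordinate as a $\gamma$-Laplacian--type term in $v$, producing coercivity in $W^{1,\gamma}(\Tt^d)$, and on the first coordinate as an $\eta|\eta|^{\alpha-1}$-type term, producing coercivity in $L^{\alpha+1}(\Tt^d;\EE)$. The monotonicity assumption \eqref{H:mon} guarantees that $A$ is itself monotone as a block operator between $(\eta,v)$-copies; choosing $B$ to be monotone and compatible with the constraint $\eta\in\EE$ then keeps $A_\epsilon$ monotone on the reflexive space $X := W^{1,\gamma}(\Tt^d)\times L^{\alpha+1}(\Tt^d;\EE)$.

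Next, on the closed convex set $K := \bigl\{(\eta,v)\in X : \eta\geq 0,\ \int_{\Tt^d}\eta\,\dx = 1\bigr\}$, I would invoke a Browder--Lions--Stampacchia-type existence theorem for variational inequalities driven by bounded, coercive, hemicontinuous, monotone operators. This yields a pair $(m_\epsilon,u_\epsilon)\in K$ such that $\langle (\eta,v)-(m_\epsilon,u_\epsilon),\, A_\epsilon(\eta,v)\rangle \geq 0$ for every $(\eta,v)\in K$. Inserting canonical perturbations $(\eta,v) = (m_\epsilon + \delta(\psi-m_\epsilon),\, u_\epsilon + \delta\phi)$ for smooth $(\psi,\phi)$ and exploiting \eqref{H:mon} together with the growth assumptions of Section~\ref{Sect:assp} delivers $\epsilon$-independent a priori bounds of the form $\|u_\epsilon\|_{W^{1,\gamma}} + \|m_\epsilon\|_{L^{\alpha+1}} \leq C$, together with the integrability of the nonlinear entries of $A$.

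Minty's trick then closes the argument. Along a subsequence $(m_\epsilon,u_\epsilon)\weakly (m,u)$ in $X$; because the test pair $(\eta,v)$ in \eqref{ws} is smooth and $h$ is smoothing by \eqref{h}, $A(\eta,v)$ is a fixed element of $C^\infty(\Tt^d)\times C^\infty(\Tt^d)$, and the duality pairing of $(\eta,v)-(m_\epsilon,u_\epsilon)$ against $A(\eta,v)$ converges to the corresponding pairing involving $(m,u)$, while the $\epsilon B$ contribution vanishes. The normalization $\int_{\Tt^d} m\,\dx = 1$ and the nonnegativity of $m$ are preserved in the limit by inserting $\eta\equiv 1$ in the inequality and by weak lower semicontinuity. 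The main obstacle I anticipate is the design of $B$: it must be monotone, coercive on a reflexive space that accommodates both the local terms ($m^\alpha$, $\ln m$, $m^{-\tau}$) and the nonlocal coupling $h(\bfm)$, and compatible with the block structure that makes $A$ monotone in the MFG sense, since a naive symmetric perturbation would break it. A secondary difficulty is preventing concentration or mass loss in the weak limit when the degenerate cases of \eqref{H0's} or \eqref{g's} are active, which may force an additional lower truncation $m_\epsilon \geq c_\epsilon > 0$ inside the regularization in order to keep the Hamiltonian well-defined.
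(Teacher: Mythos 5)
The high-level architecture you propose---monotone regularization, solve the regularized problem, pass to the limit with Minty---matches the paper's strategy, but two of your steps do not work as stated and a third is left essentially unresolved.

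First, the regularization you propose is too weak to make the operator $A$ from \eqref{A} meaningful. The first component of $A$ contains $H(x,Dv,D^2v,\eta,h(\bfeta))$ and the second contains $\big(\eta D_{M_{ij}}H\big)_{x_ix_j}$, so to evaluate $A$ on an argument $(\eta,v)$ rather than on a smooth test pair you need $v$ to possess pointwise second derivatives at least. A $\gamma$-Laplacian perturbation in $v$ produces coercivity only in $W^{1,\gamma}(\Tt^d)$, which leaves $D^2 v$ a distribution that cannot be plugged into the nonlinear map $H$. This is precisely why the paper regularizes with $\epsi_1(\cdot+\Delta^{2p}\cdot)$ on \emph{both} unknowns, with $2p-4>\tfrac d2+1$, so that the a priori $W^{2p,2}$ bound combined with Morrey's embedding \eqref{SobEmb} gives $C^{4,\lambda}$ regularity, and then bootstraps to $C^\infty$ via elliptic theory (Lemma~\ref{Lem:Wkinfty}). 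With classical solutions in hand, all the terms of $A$ and of the regularized system make pointwise sense and the Minty pairing in \eqref{ws} can be computed directly. Your lower-order perturbation does not buy this; you would be feeding a $W^{1,\gamma}$ function into a map that needs $D^2v$.

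Second, the existence step you invoke---a Browder--Lions--Stampacchia theorem for VIs on the convex set $K=\{\eta\geq 0,\ \int\eta=1\}$---cannot be applied without first establishing that $A_\epsilon$ is a bounded, coercive, hemicontinuous map from a reflexive space $X$ into $X^*$, and that is exactly what fails here (again because of $D^2v$ and the congestion/logarithmic singularities). The paper sidesteps this by abandoning the abstract VI-existence route entirely: it uses a \emph{continuation method}, interpolating between $H$ and a model Hamiltonian $\widetilde H$ via $H_\mu=(1-\mu)H+\mu\widetilde H$, proves the solution set $\Lambda^\epsi$ defined in \eqref{Lambda} is nonempty (explicit constant solution at $\mu=1$), closed (uniform $W^{k,\infty}$ estimates from Lemma~\ref{Lem:Wkinfty}), and open (implicit function theorem plus Lax--Milgram, with monotonicity giving coercivity of the linearized bilinear form via \eqref{B1pos}). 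In particular, the $L^2$-monotonicity in \eqref{H:mon} is exploited at the \emph{linearized} level, not as a hypothesis of a general VI theorem. Your ``naive symmetric perturbation would break it'' worry is also off: a diagonal regularization acting identically on both slots, like $\epsi_1(\mathrm{Id}+\Delta^{2p})$, is manifestly monotone in the $L^2\times L^2$ inner product and is exactly what the paper uses (Lemma~\ref{Lem:Ae mon}).

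Third, the positivity of $m$ is not a secondary difficulty; for $m^{-\tau}$ or $\ln m$ terms it is the central one, and it is left open in your proposal. The paper introduces the penalization $\beta_{\epsi_1}(m)$, which blows up like $-m^{-q}$ with $q>d$ near zero. A strictly positive lower bound for $m^\epsi_\mu$ \emph{uniform in $\mu\in\Lambda^\epsi$} is then proved in Lemma~\ref{Lem:m>0 mu} by combining the $L^1$ control of $\beta_{\epsi_1}(m^\epsi_\mu)$ from the energy estimate \eqref{ape1} with a Lipschitz bound on $m^\epsi_\mu$; without an ingredient of this kind the continuation argument cannot close. An \emph{ad hoc} truncation $m_\epsilon\geq c_\epsilon$, as you suggest, is not automatically compatible with the monotone structure and would create a free boundary in the VI; the penalization is chosen precisely to avoid that. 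Finally, imposing the hard constraint $\int\eta\,\dx=1$ inside $K$ introduces a Lagrange multiplier you would have to control in the limit; the paper instead recovers $\int m\,\dx=1$ by integrating the regularized Fokker--Planck equation \eqref{solmu=0} over $\Tt^d$ and passing to the limit, which is cleaner and requires no constraint on the test set in \eqref{ws}.
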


This foregoing theorem gives the existence of solutions
for a minimal set of assumptions and  
is a substantial improvement on prior results.  
First, the theorem is valid for
degenerate elliptic MFGs. These are technically challenging
since  various analytical techniques for first-order
MFGs do not apply,
and the regularizing effects due to ellipticity are mild to non-existent. In this case,
prior results
only apply to problems with a variational structure.  
Second, the theorem holds
for congestion MFGs. These were studied 
before for a particular problem
in \cite{GMit}; little is known about the existence
of solutions for general stationary problems. In the time-dependent
setting, only the short-time problem
has been considered in the literature
\cite{GVrt2, Graber2}.

While Theorem~\ref{Thm:main} gives the existence of solutions
 for MFGs, these solutions have low regularity. In Section~\ref{SOE}, we consider the degenerate
diffusion case and investigate further properties
of the  weak solutions constructed in Theorem~\ref{Thm:main}.
Under appropriate
conditions, we prove that such  solutions are,
in the sense of distributions, subsolutions of the first equation in \eqref{P}
(Proposition~\ref{subsolprop}), relaxed solutions of the
second equation in  \eqref{P}
(Proposition~\ref{wfp}), and  relaxed supersolutions of
the first equation in \eqref{P}
(Proposition~\ref{supersolprop}).
The quadratic case is examined in detail
in  Section~\ref{Sect:eaa},
where we establish higher integrability and Sobolev
estimates for \(m\). Moreover, under appropriate
assumptions, we prove that the second equation
in \eqref{P} is satisfied pointwise in \(\Tt^d\)
(Theorem~\ref{Thm:quad})
 and that the first equation
in \eqref{P} is satisfied pointwise in the set
where \(m\) is positive (Corollary~\ref{Cor:quad}).

This paper is structured as follows. 
Our main assumptions  are discussed in Section~\ref{Sect:assp}. 
Then, in Section \ref{Sect:pert}, we begin the study
of \eqref{P}. First, we introduce a regularized
problem that involves two positive, small parameters.
Next, we prove a priori estimates for solutions
to the regularized problem. 
Combining these estimates with a continuation argument,
we show the existence of solutions to the regularized problem. In Section~\ref{Sect:pmthm}, we  establish further uniform estimates
with respect to the  parameters that allow
us to pass  the regularized problem
to the limit as these parameters tend to zero.
Then,  this limiting
procedure enables us to prove Theorem~\ref{Thm:main}
by using the Minty device.
Finally, as mentioned before,   Sections~\ref{SOE} and \ref{Sect:eaa} are devoted to establishing
further properties of weak solutions for 
specific Hamiltonians. We conclude this paper
in Section~\ref{final} with some final remarks.

\section{Main assumptions}
\label{Sect:assp}

Here, we discuss the main assumptions of this
paper.
In our choice of assumptions, we have attempted to balance generality with brevity and clarity.
Naturally, this requires some compromises in illustrating the core ideas, and several extensions and variations of our results are possible though not included
in our discussion. Nevertheless, our assumptions encompass a broad range of examples, including variational mean-field games with power and logarithmic nonlinearities, congestion problems, elliptic, degenerate elliptic, and first-order Hamiltonians with both sub- and superquadratic growth.

We recall that \(\Tt^d\) represents the \(d\)-dimensional torus, which we identify with the quotient space \(\RR^d/\ZZ^d\). As before,  \(\Mm_{ac}(\Tt^d)\)
is the space of  positive measures
on \(\Tt^d\), absolutely continuous with respect
to the Lebesgue measure. Moreover, elements of \(\Mm_{ac}(\Tt^d)\) are denoted with boldface
font while their densities are represented
with the same letter in a non-boldface font.
For instance, if $\bfm\in \Mm_{ac}(\Tt^d)$, then $m \in L^1(\Tt^d)$ represents the corresponding density function.
Similarly,
if $m\in L^1(\Tt^d)$ is such that $m \geq 0$ \aev\ in $\Tt^d$ and \({\mathcal{L}^d}\) represents the \(d\)-dimensional Lebesgue measure, then $\bfm:= m{\mathcal{L}^d}_{\lfloor \Tt^d}$ denotes the corresponding measure in \(\Mm_{ac}(\Tt^d)\).
We also recall that the set
$\mathbb{E}$ is either
 \(\RR^{+}\) or \(\RR^{+}_0\).

To avoid longer expressions, we  often omit
some explicit dependencies on the space variable.
For example, we  write \(H(x, Du, D^2
u, m, h(\bfm))\) in place of \(H(x, Du(x), D^2
u(x), m(x), h(\bfm)(x))\); similarly, we write
\(g(x, m, h(\bfm))\) in place of \(g(x, m(x), h(\bfm)(x))\),
and so on.
 
In what follows,
{\setlength\arraycolsep{0.5pt}%
\begin{eqnarray*}
\begin{aligned}
h: \Mm_{ac}(\Tt^d)
 &\to C(\Tt^d)\\
 \bfm &\mapsto h(\bfm)
 \end{aligned}
\end{eqnarray*}}%
is a (possibly nonlinear) operator. In most
of our statements, we suppose that:
\begin{hypotheses}{h}
\item \label{h}
For all  $\kappa > \frac{d}{2}+1$:
\begin{enumerate}
    \item \( \{h(\bfm)\!: \, m \in
W^{\kappa,2}(\Tt^d)\} \subset W^{\kappa,2}
(\Tt^d) \);
    \item \(m\in  W^{\kappa,2}(\Tt^d) \mapsto h(\bfm)
\in  W^{\kappa,2}(\Tt^d)\)
  defines  a Fr\'echet differentiable map.
  \newcounter{hyph}
  \setcounter{hyph}{\arabic{enumi}}
  \end{enumerate}
\end{hypotheses}

If \(h\) satisfies the Assumption~\eqref{h}, then  for all $m_0 \in W^{\kappa,2}(\Tt^d)$, there exists a bounded linear operator, \(\mathfrak{H}_{m_0} \in \mathcal{L} (W^{\kappa,2}(\Tt^d) ; W^{\kappa,2}(\Tt^d)) \), such that, for all $m \in  W^{\kappa,2}(\Tt^d)$,
{\setlength\arraycolsep{0.5pt}
\begin{eqnarray}\label{hFD}
\begin{aligned}
&\Vert h(\bfm) - h(\bfm_0)\Vert_{W^{\kappa,2}(\Tt^d)} \\
&\qquad\leq \Vert \mathfrak{H}_{m_0}\Vert_{\mathcal{L}(W^{\kappa,2}(\Tt^d); W^{\kappa,2}(\Tt^d))} \Vert m - m_0
\Vert_{W^{\kappa,2}(\Tt^d)} + o\big(\Vert m - m_0
\Vert_{W^{\kappa,2}(\Tt^d)} \big).
\end{aligned}
\end{eqnarray}}%
Moreover, taking $m_0 =0$ in \eqref{hFD}, we get
{\setlength\arraycolsep{0.5pt}
\begin{eqnarray}\label{on h(m)}
\begin{aligned}
\Vert h(\bfm)\Vert_{W^{\kappa,2}(\Tt^d)} \leq C_0 \big( 1+\Vert m\Vert_{W^{\kappa,2}(\Tt^d)} \big)
\end{aligned}
\end{eqnarray}}%
for some positive constant $C_0= C_0
\big (\kappa,\Tt^d, \Vert \mathfrak{H}_0\Vert_{\mathcal{L}
(W^{\kappa,2}(\Tt^d);
W^{\kappa,2}(\Tt^d))} ,\Vert \mathfrak{H}(\boldsymbol{0})
\Vert_{W^{\kappa,2}(\Tt^d)} \big)$.

Examples of operators  satisfying the previous
assumption are those in \eqref{H's}. 

To describe the behavior of the Hamiltonian \eqref{H} in the variables \((x,m,\theta)\), we introduce a continuous function, $g:\Tt^d \times \EE \times\RR\to\RR$, and the following  assumptions:
 \medskip
\begin{hypotheses}{g}
\item \label{g}
  \begin{enumerate}
    \item \label{G:sumg}  There exist functions,
\(g_1\) and \(g_2\), such that, for all $(x, m,\theta)
        \in \Tt^d \times  \EE \times \RR$,
        {\setlength\arraycolsep{0.5pt}%
        \begin{eqnarray*}
        \begin{aligned}
        g(x,m, \theta) = g_1(x,m, \theta) +         g_2(x,m, \theta)
        \end{aligned}
        \end{eqnarray*}}%
         and, for all \(\bfm\in  \Mm_{ac}(\Tt^d) \), \[g_2(x,m(x), h(\bfm)(x)) \geq 0.\]
    
        \item \label{G:g+} 
        There exists a constant, $c>0$,              such that, for all $\bfm  \in \Mm_{ac}(\Tt^d)$,
        {\setlength\arraycolsep{0.5pt}%
        \begin{eqnarray*}
        \begin{aligned}
        \qquad \int_{\Tt^d} g_2(x ,m,h(\bfm))\,\dx
        \leq c \bigg(1+\int_{\Tt^d} m\,\dx\bigg).
        \end{aligned}
        \end{eqnarray*}}%
    \item \label{G:g ub}
        For all $\delta>0$, there exists            a constant, $C_\delta >0$, such that,
        for all $\bfm  \in \Mm_{ac}(\Tt^d)$,
        {\setlength\arraycolsep{0.5pt}%
        \begin{eqnarray*}
        \begin{aligned}
        \max\bigg\{ \int_{\Tt^d} g_1(x,m,h(\bfm))         \,\dx, \int_{\Tt^d} m\,\dx \bigg\}         \leq \delta \int_{\Tt^d}
        mg_1(x,m,h(\bfm))\,\dx + C_\delta.
        \end{aligned}
        \end{eqnarray*}}%
     
    \item \label{G:g lb}
        There exists $R\geq 0$ such that,           for all $\bfm \in \Mm_{ac}(\Tt^d)$,
        {\setlength\arraycolsep{0.5pt}%
        \begin{eqnarray*}
        \begin{aligned}
        \int_{\Tt^d} mg_1(x,m,h(\bfm))\,\dx         \geq - R.
        \end{aligned}
        \end{eqnarray*}}%
  \end{enumerate}
 
\item  \label{G:g ei}
 If  \((\bfm_j)_{j\in\NN} \subset  \Mm_{ac}(\Tt^d)\)
is such that \[\sup_{j\in\NN} \int_{\Tt^d}
m_jg_1(x,m_j,h(\bfm_j))\,\dx < +\infty,\]
then there exists a  subsequence 
of  $(m_j)_{j\in\NN}$ that  converges weakly
in $L^1(\Tt^d)$.
\end{hypotheses} 
\medskip

The functions  $g$  in \eqref{g's} with \(h\)
as in \eqref{H's} satisfy
Assumptions~\eqref{g} and \eqref{G:g ei} for \(g_1(x,m,\theta):=
m^\alpha\) or \(g_1(x,m,\theta) :=
\ln(m)\), and \(g_2(x,m,\theta):=
\theta\).

Next, we enumerate  hypotheses on the Hamiltonian.
Recall that $M_1:M_2=\tr (M_1 M_2)$ whenever \(M_1\)
and \(M_2\) are  symmetric matrices. \medskip

\begin{hypotheses}{H}
\item \label{bddH}
There exist constants, $\gamma >1$, $0 \leq \tau < 1$, \(C_1>0\), \(C_2>0\), \(C_3>0\), and \(C_4>0\),
and a  continuous 
function, $g:\Tt^d \times \EE \times\RR\to\RR$,  such that:
        \begin{enumerate}
        \item  \label{H:ae lb}
        for all $(x,p,M,m,\theta) \in        
     \Tt^d \times \RR^d \times \RR^{d\times
        d} \times\EE \times \RR$,
        {\setlength\arraycolsep{0.5pt}%
        \begin{eqnarray*}
        &&-H(x,p,M,m,\theta) + D_p H(x,p,M,m,\theta)         \cdot p + D_MH(x,p,M,m,\theta) : M         \\ 
        &&\qquad\geq\frac{1}{C_1}   m^{-\tau} |p|^\gamma         +C_2g(x,m,\theta)-  C_1;
        \end{eqnarray*}}%
        \item \label{H:int b}
        for all $(\bfm,u) \in      \mathcal{M}_{ac}(\Tt^d)
        \times W^{1, \gamma}(\Tt^d)$ with         $D^2u$ a measurable function,
        {\setlength\arraycolsep{0.5pt}%
        \begin{eqnarray*}
        \begin{aligned}
        & - C_3\int_{\Tt^d}  g(x,m,         h(\bfm))\,\dx  +\frac{1}{C_1}    \intt |Du|^\gamma m^{-\tau}\,\dx          - C_1 \\&\quad\leq \int_{\Tt^d}         H(x, Du, D^2 u, m, h(\bfm))\,\dx         \\  &\quad\leq - C_4\int_{\Tt^d}  g(x,m,      
  h(\bfm))\,\dx + C_1\bigg( 1+\int_{\Tt^d}          |Du|^\gamma         m^{-\tau} \,\dx \bigg). \end{aligned}
        \end{eqnarray*}}%
        \end{enumerate}

\item \label{H:smooth}
The map $(x,p,M,m,\theta) \mapsto H(x,p,M,m,
\theta) $  from \(\Tt^d \times \RR^d \times \RR^{d\times
d} \times \EE\times \RR\) to \(\RR\) is  of class $C^\infty$.

\item \label{H:mon}
The operator, $A$, defined in \eqref{A} is monotone
with respect to the $L^2 \times L^2$-inner product. More precisely,
for all  $(\eta, v), \, (\bar\eta, \bar
v)  \in {C^\infty}(\Tt^d; \EE) \times
{C^\infty}(\Tt^d)$, we have 
{\setlength\arraycolsep{0.5pt}%
\begin{eqnarray*}
\begin{aligned}
\left( A \begin{bmatrix}\eta \\
v \\
\end{bmatrix} -  A \begin{bmatrix} \bar\eta
\\
\bar v \\
\end{bmatrix}, \begin{bmatrix}\eta \\
v \\
\end{bmatrix}
 -  \begin{bmatrix} \bar \eta \\
\bar v \\
\end{bmatrix} \right)_{L^2(\Tt^d) \times L^2(\Tt^d)}
\geq 0.
\end{aligned}
\end{eqnarray*}}%
\newcounter{hypH}
  \setcounter{hypH}{\arabic{enumi}}
\end{hypotheses}
\medskip

Examples of Hamiltonians satisfying 
\eqref{bddH}--\eqref{H:mon} are
given by \eqref{H's}--\eqref{g's}.
Virtually all nonlinear
Hamiltonians in the MFGs literature satisfy these
assumptions, and 
Theorem \ref{Thm:main} holds under them.

As mentioned in the Introduction, we also  investigate
 further properties of the weak solutions
constructed in Theorem~\ref{Thm:main}. This study
requires
 additional assumptions on the Hamiltonian.
 The precise statements of the additional assumptions  are given in Section~\ref{SOE}
(see Assumption~\eqref{dege}) and in
Section~\ref{Sect:eaa} (see Assumption~\eqref{CC}).

\section{Regularized mean-field game}\label{Sect:pert}

To construct solutions to \eqref{P}, we introduce the regularized problem
\begin{equation}
\label{RP}
\begin{bmatrix} \displaystyle-u- H+\epsi_1(m
+\Delta^{2p}m) +\epsi_2(m -\Delta m)+ 
\beta_{\epsi_1}(m)\\
m - \div(m D_p H) + \big(m
D_{M_{ij}} H\big)_{x_i x_j} -1 + \epsi_1(u
+\Delta^{2p}u) +\epsi_2(u -\Delta u)
\end{bmatrix}
=
\begin{bmatrix} 
0\\
0
\end{bmatrix}, 
\end{equation}
where $\epsi_1,\, \epsi_2>0$,  $p\in \Nn$ satisfies
$2p-4>\frac{d}{2}+1$, and 
$\beta_{\epsi_1}:(0,+\infty)\to (-\infty,0]$ is a non-decreasing, \(C^\infty\) function satisfying $\beta_{\epsi_1}(s)=0$ if $s\geq \epsi_1$ and $\beta_{\epsi_1}(s) = -\frac{1}{s^q}$ if $0<s \leq \frac{\epsi_1}{2}$, where \(q>d\).
Moreover, the function
$H$
(and, analogously, $D_p H$ and $
D_{M_{ij}} H$) is 
identified with the map
\[
x\mapsto H(x, Du(x), D^2 u(x), m(x),h(\bfm)(x)).\]
 Here, we prove the following result.

\begin{pro}
\label{RPSol}
Suppose that Assumptions \eqref{h}, \eqref{g}, and \eqref{bddH}--\eqref{H:mon} hold. Then, for all $\epsi_1,\, \epsi_2 \in (0,1)$,  \eqref{RP} admits a $C^\infty$ 
solution $(m,u)$ with $\inf_{\Tt^d} m>0$. 
\end{pro}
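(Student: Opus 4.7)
The plan is to combine a continuation argument with a priori estimates, working at a Sobolev level dictated by the strong regularization $\epsi_1\Delta^{2p}$. I would set $\kappa=2p$, which by the choice $2p-4>d/2+1$ gives the Sobolev embedding $W^{\kappa,2}(\Tt^d)\hookrightarrow C^4(\Tt^d)$, so that for any $(m,u)$ in the Banach space $X:=W^{\kappa,2}(\Tt^d)\times W^{\kappa,2}(\Tt^d)$ the Hamiltonian and its first two derivatives may be evaluated pointwise. Working inside the open subset $X_+:=\{(m,u)\in X:\inf_{\Tt^d}m>0\}$, I would introduce a continuous homotopy $\mathcal A_\lambda\colon X_+\to X^*$, $\lambda\in[0,1]$, interpolating between $\mathcal A_0$, obtained by deleting the nonlinear terms $H$ and $\beta_{\epsi_1}$ and keeping only the coercive, linear pieces $\epsi_1(\mathrm{Id}+\Delta^{2p})$ and $\epsi_2(\mathrm{Id}-\Delta)$ (which admits a unique smooth solution with $\inf m>0$ by standard elliptic theory), and $\mathcal A_1$, the operator in \eqref{RP}. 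The goal is to prove that the set $\Lambda:=\{\lambda\in[0,1]:\mathcal A_\lambda=0\text{ has a solution in }X_+\}$ is nonempty, open, and closed, hence all of $[0,1]$.

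The heart of the argument is a $\lambda$-uniform a priori estimate. I would test the first equation of $\mathcal A_\lambda=0$ against $m-1$ and the second against $u$, then sum and integrate by parts. After the $-um$ and $+mu$ terms cancel, the regularization contributes
\[
\epsi_1\big(\|\Delta^p m\|_{L^2}^2+\|\Delta^p u\|_{L^2}^2\big)+\epsi_2\big(\|\nabla m\|_{L^2}^2+\|\nabla u\|_{L^2}^2\big)+\text{positive }L^2\text{ terms},
\]
while the Hamiltonian combines into
\[
\int_{\Tt^d}\!\big(-Hm+m\,D_pH\cdot Du+m\,D^2u:D_MH\big)\,\dx\;-\;\int_{\Tt^d}\! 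H\,\dx,
\]
which by Assumptions~\eqref{H:ae lb}--\eqref{H:int b} is bounded below by $c\int m^{1-\tau}|Du|^\gamma\,\dx+c'\int mg_1(x,m,h(\bfm))\,\dx$ minus a constant. Assumption~\eqref{g} then converts these into a uniform $L^1$-bound on $m$, on $mg_1$, and on $g$. Feeding this back, I obtain uniform $H^{2p}$-estimates on $m$ and $u$, which by Sobolev embedding yield $L^\infty$-bounds for $m$, $u$, and their derivatives of order up to $\kappa-1-\lfloor d/2\rfloor$.

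To close the argument I still need $\inf m>0$ uniformly in $\lambda$: rearranging the first equation gives $\beta_{\epsi_1}(m)=u+H-\epsi_1(m+\Delta^{2p}m)-\epsi_2(m-\Delta m)$, whose right-hand side is now controlled in $L^\infty$ by the previous bounds; since $\beta_{\epsi_1}(s)=-s^{-q}$ for $s\leq\epsi_1/2$, this forces $m\geq c(\epsi_1)>0$. With $(\bfm,u)$ staying in a fixed bounded subset of $X_+$, openness of $\Lambda$ follows from the implicit function theorem (the linearization of $\mathcal A_\lambda$ is invertible on $X$ because of Assumption~\eqref{H:mon} and the coercive principal part $\epsi_1\Delta^{2p}$), and closedness follows from the compact embedding $W^{\kappa,2}\hookrightarrow W^{\kappa-1,2}$ combined with continuity of $\mathcal A_\lambda$. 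Hence $1\in\Lambda$. A standard elliptic bootstrap, using the $C^\infty$-smoothness of $H$, $h$, and $\beta_{\epsi_1}$ from Assumptions~\eqref{h} and \eqref{H:smooth}, then promotes the $W^{\kappa,2}$ solution to a $C^\infty$ solution.

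The hardest part will be closing the feedback loop that underlies the a priori estimate: the lower bound $\inf m>0$ is needed to make sense of the term $\int m^{1-\tau}|Du|^\gamma$ (and indirectly of the Hamiltonian itself when $\tau>0$), while that lower bound is itself extracted from the rearranged first equation and therefore requires $L^\infty$-control of the other summands, which in turn rests on the integral estimates. The barrier $\beta_{\epsi_1}$ is tuned precisely to break this circularity, but executing the bootstrap so that all constants are independent of $\lambda$—so that the continuation step can be carried out—will be the most delicate part.
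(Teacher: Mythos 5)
Your overall architecture — continuation method, energy estimate from testing against $(m-1,u)$, barrier $\beta_{\epsi_1}$, Lax--Milgram plus the implicit function theorem for openness, elliptic bootstrap at the end — is the same as the paper's (Section~3). But there are two points worth flagging, one of which is a genuine gap.

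The minor point is the choice of homotopy. The paper does not interpolate to the trivial linear operator; it interpolates $H$ to the explicit \emph{model Hamiltonian} $\widetilde H(p,m)=\frac{(1+|p|^2)^{\gamma/2}}{\gamma m^\tau}-m$ via $H_\mu=(1-\mu)H+\mu\widetilde H$, and Remark~\ref{Rmk: oH} is invoked precisely so that $H_\mu$ satisfies Assumptions~\eqref{bddH}--\eqref{H:mon} with $\mu$-independent constants. This makes the a priori estimate (Lemma~\ref{Lem:ape}) genuinely uniform along the path. With your homotopy $\lambda H$, the coercive lower bound from~\eqref{H:ae lb} and the $L^1$ control on $\beta_{\epsi_1}(m)$ degrade by a factor $1/\lambda$ as $\lambda\to 0^+$, so the uniformity you need for the closedness step has to be argued more carefully (restricting to $\lambda\geq\delta$ and handling a neighborhood of $\lambda=0$ separately). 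This is probably repairable, but the paper's choice of $\widetilde H$ is designed to sidestep it.

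The genuine gap is your proof of the uniform lower bound $\inf_{\Tt^d}m>0$. You propose to rearrange the first equation to $\beta_{\epsi_1}(m)=u+H-\epsi_1(m+\Delta^{2p}m)-\epsi_2(m-\Delta m)$ and read off an $L^\infty$ bound on $\beta_{\epsi_1}(m)$. This is circular in two ways. First, when $\tau>0$ (congestion), $H(x,Du,D^2u,m,h(\bfm))$ contains $m^{-\tau}$ and is \emph{not} bounded in $L^\infty$ without an a priori lower bound on $m$ — the very thing you are trying to establish. Second, $\Delta^{2p}m$ is not controlled in $L^\infty$ by the $W^{2p,2}$ bound coming from the energy estimate; to control it you would need the full $W^{k,\infty}$ bootstrap of Lemma~\ref{Lem:Wkinfty}, and that bootstrap itself requires $\inf m>0$ as an input (it needs the argument of $H$ to range over a compact subset of $\Tt^d\times\RR^d\times\RR^{d\times d}\times\EE\times\RR$, and $\beta_{\epsi_1}$ restricted to a compact interval $[\underline m^\epsi,\overline m^\epsi]\subset(0,\infty)$). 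The paper avoids the circularity by arguing differently (Lemma~\ref{Lem:m>0 mu}): the energy estimate gives an $L^1$ bound $\int_{\Tt^d}-\beta_{\epsi_1}(m)\,\dx\leq C$ and a Lipschitz bound $\|m\|_{W^{1,\infty}}\leq L$; if $m(\bar x)=\bar m$ were too small, then on the cube of side $\bar m$ centered at $\bar x$ one has $m\lesssim\bar m$, so $\int -\beta_{\epsi_1}(m)\gtrsim\bar m^{\,d-q}$, which blows up since $q>d$ — a contradiction. That geometric argument uses only the $L^1$ information and never needs pointwise control of $H$ or of $\Delta^{2p}m$. You already have the two required ingredients (the $L^1$ bound on $\beta_{\epsi_1}(m)$ follows from your choice of test function as well, and the $W^{1,\infty}$ bound follows from $W^{2p,2}$ and $2p-4>d/2+1$); you should replace the $L^\infty$ rearrangement argument with this one.

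Once $\inf m>0$ is secured this way, your bootstrap to $C^\infty$, the Lax--Milgram invertibility of the linearization (monotonicity plus $\epsi_1\Delta^{2p}$ coercivity), and the closedness via compact Sobolev embedding all go through, as in Lemmas~\ref{Lem:Wkinfty}, 3.7, and 3.6 of the paper.
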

We postpone the proof of this proposition to the end of this section
because we need to establish  certain 
 a priori bounds first.
These bounds rely on two main effects: 
the term $(m
+\Delta^{2p}m)$
gives regularity; the  
penalization term $\beta_{\epsi_1}$ 
forces $m$ to be strictly positive.
After proving these bounds, 
we use the continuation method to show the existence of a solution. 

\subsection{The perturbed problem} \label{tpp}

Fix $\epsi_1,\, \epsi_2 \in (0,1)$, $\mu\in[0,1]$,  $q>d$, and  $p\in \Nn$  such that $2p-4>\frac{d}{2}+1$.
Assume that \(H\) satisfies Assumption~\eqref{bddH},
and define
{\setlength\arraycolsep{0.5pt}%
        \begin{eqnarray*}
                \begin{aligned}
                        &H_\mu (x,p,M,m, \theta):= (1-\mu)H(x,p,M,m, \theta) +\mu \widetilde H(p,m),
                \end{aligned}
        \end{eqnarray*}}%
where
\begin{equation*}
\widetilde H(p,m):= \frac{(1+|p|^2)^{\frac{\gamma}{2}}}{\gamma
m^\tau}-m.
\end{equation*}
Set $\epsi:=(\epsi_1, \epsi_2)$, and define the operator $A^\epsi_\mu:D(A^\epsi_\mu) \subset L^2(\Tt^d)\times L^2(\Tt^d) \to  L^2(\Tt^d)\times L^2(\Tt^d)$, where  
\begin{eqnarray}\label{domAe}
\begin{aligned}
D(A^\epsi_\mu):=\left\{ (m,u)\in W^{4p,2}(\Tt^d) \times W^{4p,2}(\Tt^d)\!: \,\, \inf_{\Tt^d}m>0\right\},
\end{aligned}
\end{eqnarray}
by
\begin{eqnarray}\label{Ae}
\begin{aligned}
A^\epsi_\mu\begin{bmatrix}m \\
u \\
\end{bmatrix}
:= \displaystyle \begin{bmatrix} \displaystyle-u- H_\mu+\epsi_1(m
+\Delta^{2p}m) +\epsi_2(m -\Delta m)+ 
\beta_{\epsi_1}(m)\\
m - \div(m D_p H_\mu) + \big(m
D_{M_{ij}} H_\mu\big)_{x_i x_j} -1 + \epsi_1(u
+\Delta^{2p}u) +\epsi_2(u -\Delta u)
\end{bmatrix}\! \,
\end{aligned}
\end{eqnarray}
for $(m,u)\in D(A^\epsi_\mu)$. As before, the function $H_\mu$
(and, analogously, $D_p H_\mu$ and $
D_{M_{ij}} H_\mu$) is 
identified with the map
\[
x\mapsto H_\mu(x, Du(x), D^2 u(x), m(x),h(\bfm)(x)).
\]

The parameter \(\epsi_1\) controls the main regularizing effects; 
the parameter \(\epsi_2\) is a technical device
that is used later to   improve the regularity of the weak solutions. 
\begin{remark}\label{Rmk: oH}
Observe that  $\widetilde H$ satisfies Assumptions  \eqref{H:smooth} and \eqref{H:mon}. Moreover, if \(\tilde g_1(x,m,\theta):=m\) and   \(\tilde g_2(x,m,\theta):=0\), then \(\tilde g := \tilde
g_1 + \tilde g_2\) satisfies
Assumptions~\eqref{g} and \eqref{G:g ei}. Furthermore,
Assumption~\eqref{bddH} is also satisfied, where
\(C_1\) is replaced by some positive constant
that depends on \(\gamma\), \(\widetilde C_1= \widetilde C_1(\gamma) \), and with \(C_2=
C_3 = C_4 =1\). Without loss of generality, we
may assume that \(\widetilde C_1 = C_1\); otherwise,
we simply replace both \(C_1\) and \(\widetilde
C_1\) by \(\max\{C_1, \widetilde C_1\}\) and relabel
the constants conveniently. \end{remark}

Recall  that $W^{k,2}(\Tt^d)$ is an algebra for $k>\frac{d}{2}$. Moreover, 
 since
 $2p-4>\frac{d}{2} +1$, 
 Morrey's
theorem yields that the embedding $W^{2p-4,2}(\Tt^d)\hookrightarrow
C^\lambda(\Tt^d)$ is continuous for some
$\lambda\in (0,1)$. In particular,
there exists a positive constant, \(C\),  depending
only on $p$, $d$, and $\lambda$, such
that, for all $w\in W^{2p-4,2}(\Tt^d)$,
\begin{eqnarray}\label{SobEmb}
\begin{aligned}
\Vert w\Vert_{L^\infty(\Tt^d)}\leq C \Vert
w\Vert_{W^{2p-4,2}(\Tt^d)}.
\end{aligned}
\end{eqnarray}
 
Finally, consider the problem of finding $(m^\epsi_\mu,u^\epsi_\mu)\in D(A^\epsi_\mu)$ satisfying the  perturbed mean-field game
\begin{eqnarray}\label{Pe}
\begin{aligned}
A^\epsi_\mu\begin{bmatrix}m^\epsi_\mu \\
u^\epsi_\mu \\
\end{bmatrix} = \begin{bmatrix}0 \\
0 \\
\end{bmatrix}.
\end{aligned}
\end{eqnarray}
Note that
\eqref{Pe}  with \(\mu=0\) reduces to \eqref{RP}.

We set
\begin{eqnarray}\label{Lambda}
\Lambda^\epsi:=\{\mu\in [0,1]\!:\,\, \exists\, (m^\epsi_\mu,u^\epsi_\mu)\in
 D(A^\epsi_\mu)  \cap{C^\infty}(\Tt^d) \times  {C^\infty}(\Tt^d)  \hbox{ solving \eqref{Pe}}\}.
\end{eqnarray}
Our goal is to establish the equality
\begin{eqnarray}\label{Lambda=01}
\begin{aligned}
\Lambda^\epsi=[0,1].
\end{aligned}
\end{eqnarray}
To this end,
next, 
we begin by proving a priori estimates for classical solutions of \eqref{Pe}. These are essential to the application of
the continuation method. In Subsection \ref{Subs:L=[0,1]}, we
use this method to show that $\Lambda^\epsi$ is a nonempty set, relatively closed and open in $[0,1]$. Consequently, \eqref{Lambda=01} holds.

\subsection{A priori estimates for classical
solutions to the perturbed mean-field game}\label{Subs:ape}

We start by establishing preliminary a priori estimates for classical solutions of \eqref{Pe}. 
These estimates involve constants
whose main feature is their independence of any
particular choice of solutions to problem \eqref{Pe}.
To simplify the notation, we introduce some nomenclature
to specify these constants. We say that
\textit{a constant depends only on the problem
data} if
it is a function of the dimension, \(d\),  
 of \(p\), and of
the constants in  Assumptions~\eqref{h}, \eqref{g},
 and \eqref{bddH}.
We say that 
\textit{a constant depends only on the problem
data and on \(\epsi\)} if it is a function of
\(d\),  
 of \(p\),  of
the constants in  Assumptions~\eqref{h}, \eqref{g},  and \eqref{bddH}, of \(\lambda\),
of \(\epsi_1\), of \(\epsi_2\), and of the \(L^\infty\)-estimates
of \(\beta_{\epsi_1}
(\cdot)\) and its derivatives on a compact subset
of \((0,\infty)\) that depends only on \(\epsi_1\),
on \(\epsi_2\),
and on a constant that
depends only on the problem
data.
We stress that these constants do not depend 
  on
the choice of solutions to \eqref{RP} nor on \(\mu\).

\begin{lem}\label{Lem:ape}
Suppose that Assumptions \eqref{g} and \eqref{bddH} hold. Let
\(\epsi \in
(0,1)^2\)  and \(\mu \in [0,1]\). Assume that
 $(m^\epsi_\mu,u^\epsi_\mu)\in
C^\infty(\Tt^d)\times C^\infty(\Tt^d)$ solves \eqref{Pe}
 in $\Tt^d$.
 Suppose that 
 $m^\epsi_\mu>0$ in $\Tt^d$.
 Then, there exists a positive constant, $C$, that depends only on the problem
data such that
\begin{eqnarray}
&&\int_{\Tt^d}
|Du^\epsi_\mu|^\gamma (m^\epsi_\mu )^{1-\tau}\dx +  \int_{\Tt^d}
|Du^\epsi_\mu|^\gamma (m^\epsi_\mu )^{-\tau} \, \dx+ \int_{\Tt^d}
-\beta_{\epsi_1}(m^\epsi_\mu)\,\dx     \nonumber\\ 
 &&\qquad+ \int_{\Tt^d}
\beta_{\epsi_1}
(m^\epsi_\mu)(m^\epsi_\mu-\epsi_1) \,\dx+(1-\mu) \int_{\Tt^d} m^\epsi_\mu g_2(x,m^\epsi_\mu,h(\bfm^\epsi_\mu))\,\dx     \nonumber\\ 
 &&\qquad+\epsi_1 \int_{\Tt^d} \left[(u^\epsi_\mu)^2
+ (\Delta^p u^\epsi_\mu)^2 + (m^\epsi_\mu)^2
+ (\Delta^p m^\epsi_\mu)^2\right]\dx\qquad \nonumber\\
&&\qquad\displaystyle +\epsi_2
\int_{\Tt^d} \big((u^\epsi_\mu)^2 + |D
u^\epsi_\mu|^2 + (m^\epsi_\mu)^2
+ |D m^\epsi_\mu|^2 
\big) \,\dx \leq C. \label{ape1}
\end{eqnarray}
\end{lem}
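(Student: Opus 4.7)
The approach is the classical monotonicity pairing used throughout MFG theory. I would test the first equation in~\eqref{Pe} against $m^\epsi_\mu-1$ and the second against $u^\epsi_\mu$, integrate over $\Tt^d$, and add the two identities. By construction the cross contributions $-\intt u^\epsi_\mu(m^\epsi_\mu-1)\,\dx$ and $\intt (m^\epsi_\mu-1)u^\epsi_\mu\,\dx$ cancel exactly. Integration by parts on $\Tt^d$ (using $\intt\Delta^{2p}w\,\dx=\intt\Delta w\,\dx=0$, $\intt w\Delta^{2p}w\,\dx=\intt(\Delta^pw)^2\,\dx$, and $-\intt w\Delta w\,\dx=\intt|Dw|^2\,\dx$) reduces the identity to $\intt m^\epsi_\mu[-H_\mu+D_pH_\mu\cdot Du^\epsi_\mu+D_MH_\mu:D^2u^\epsi_\mu]\,\dx+\intt H_\mu\,\dx+\mathcal{E}_{\epsi_1}+\mathcal{E}_{\epsi_2}+\intt\beta_{\epsi_1}(m^\epsi_\mu)(m^\epsi_\mu-1)\,\dx=0$, where each $\mathcal{E}_{\epsi_j}$ collects the non-negative $\epsi_j$-squared quantities appearing in~\eqref{ape1} together with a single linear remainder $-\epsi_j\intt m^\epsi_\mu\,\dx$. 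The penalization contribution splits as $\intt\beta_{\epsi_1}(m^\epsi_\mu)(m^\epsi_\mu-\epsi_1)\,\dx+(1-\epsi_1)\intt(-\beta_{\epsi_1}(m^\epsi_\mu))\,\dx$, and both summands are non-negative from the sign and support of $\beta_{\epsi_1}$.

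Next, invoking~\eqref{H:ae lb} pointwise (multiplied by $m^\epsi_\mu>0$) and~\eqref{H:int b} on the two $H_\mu$-integrals, together with Remark~\ref{Rmk: oH} so that $H_\mu=(1-\mu)H+\mu\widetilde H$ is handled with a shared $C_1$ and unit constants $C_2=C_3=1$ for $\widetilde H$, yields a lower bound of the form $\tfrac{1}{C_1}\intt(m^\epsi_\mu)^{1-\tau}|Du^\epsi_\mu|^\gamma\,\dx+\tfrac{1}{C_1}\intt|Du^\epsi_\mu|^\gamma(m^\epsi_\mu)^{-\tau}\,\dx+(1-\mu)[C_2\intt m^\epsi_\mu g\,\dx-C_3\intt g\,\dx]+\mu\intt(m^\epsi_\mu)^2\,\dx-(C_1+\mu)\intt m^\epsi_\mu\,\dx-C_1$. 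Splitting $g=g_1+g_2$ and applying~\eqref{G:g ub} with a small $\delta$ to $-(1-\mu)C_3\intt g_1\,\dx$,~\eqref{G:g+} to $-(1-\mu)C_3\intt g_2\,\dx$, and~\eqref{G:g lb} to a small fraction of $(1-\mu)C_2\intt m^\epsi_\mu g_1\,\dx$ converts the previous expression into $\tfrac{1}{C_1}\intt(m^\epsi_\mu)^{1-\tau}|Du^\epsi_\mu|^\gamma\,\dx+\tfrac{1}{C_1}\intt|Du^\epsi_\mu|^\gamma(m^\epsi_\mu)^{-\tau}\,\dx+(1-\mu)C_2\intt m^\epsi_\mu g_2\,\dx+[(1-\mu)(C_2/4)\intt m^\epsi_\mu g_1\,\dx+\mu(C_2/2)\intt(m^\epsi_\mu)^2\,\dx]-K(1+\intt m^\epsi_\mu\,\dx)$, with $K$ depending only on the problem data.

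The decisive step is then to absorb the remaining $K\intt m^\epsi_\mu\,\dx$ by the reserved bracket on the left-hand side, uniformly in $\mu\in[0,1]$ and $\epsi$. I would do this through the $\mu$-uniform estimate $\intt m^\epsi_\mu\,\dx\leq\delta(1-\mu)\intt m^\epsi_\mu g_1\,\dx+\tfrac{\mu}{2}\intt(m^\epsi_\mu)^2\,\dx+(1-\mu)C_\delta+\tfrac{\mu}{2}|\Tt^d|$, obtained by combining~\eqref{G:g ub} for the $(1-\mu)$-part of $\intt m^\epsi_\mu\,\dx$ with Young's inequality $\mu m\leq\tfrac{\mu}{2}(m^2+1)$ for the $\mu$-part. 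The main obstacle I foresee is exactly this uniform absorption: the factor $(1-\mu)$ in~\eqref{G:g ub} degenerates as $\mu\to1$, while the Young-based bound degenerates as $\mu\to0$; splitting $\intt m^\epsi_\mu\,\dx$ into its $(1-\mu)$ and $\mu$ pieces handles both endpoints simultaneously. After absorption, the surviving left-hand quantities are precisely those listed in~\eqref{ape1} and the right-hand side is a constant depending only on the problem data, which proves the claim.
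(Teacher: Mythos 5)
Your overall scheme is exactly the one used in the paper: pair the first equation against a shift of $m^\epsi_\mu$ and the second against $u^\epsi_\mu$, add, integrate by parts, invoke \eqref{H:ae lb} and \eqref{H:int b} (via Remark~\ref{Rmk: oH}), split $g=g_1+g_2$, and absorb $\int_{\Tt^d} m^\epsi_\mu\,\dx$ using \eqref{G:g ub}, Young's inequality, and finally \eqref{G:g lb}. The $\mu$-uniform absorption by splitting $\int m$ into its $(1-\mu)$ and $\mu$ pieces is also the same as what the paper does at \eqref{onape12}--\eqref{onape13}.

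The one genuine gap is your choice of multiplier $m^\epsi_\mu-1$ for the first equation. You correctly compute that this produces
\[
\int_{\Tt^d}\beta_{\epsi_1}(m^\epsi_\mu)(m^\epsi_\mu-\epsi_1)\,\dx
+(1-\epsi_1)\int_{\Tt^d}\bigl(-\beta_{\epsi_1}(m^\epsi_\mu)\bigr)\,\dx,
\]
but you then assert that after absorption ``the surviving left-hand quantities are precisely those listed in \eqref{ape1}.'' They are not: \eqref{ape1} has coefficient $1$ on $\int-\beta_{\epsi_1}(m^\epsi_\mu)\,\dx$, while your bound carries $(1-\epsi_1)$. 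Since the lemma requires $C$ to depend only on the problem data (and explicitly \emph{not} on $\epsi$), and $1-\epsi_1\to 0$ as $\epsi_1\to 1^-$, your inequality is strictly weaker and does not yield \eqref{ape1}: dividing through by $1-\epsi_1$ destroys the $\epsi$-uniformity, which is later needed (e.g.\ for Proposition~\ref{punifbd}, estimate \eqref{Est3}). The paper avoids this by testing the first equation against $m^\epsi_\mu-\epsi_1-1$, so that
\[
\beta_{\epsi_1}(m^\epsi_\mu)(m^\epsi_\mu-\epsi_1-1)
=\beta_{\epsi_1}(m^\epsi_\mu)(m^\epsi_\mu-\epsi_1)+\bigl(-\beta_{\epsi_1}(m^\epsi_\mu)\bigr),
\]
giving the clean coefficient $1$. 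The price is that the cross terms no longer cancel exactly, leaving a residual $\epsi_1\int_{\Tt^d} u^\epsi_\mu\,\dx$, which is then absorbed by Young's inequality into $\tfrac{\epsi_1}{2}\int_{\Tt^d}(u^\epsi_\mu)^2\,\dx+\tfrac{\epsi_1}{2}$. Replacing your multiplier by $m^\epsi_\mu-\epsi_1-1$ and handling this residual closes the gap; otherwise your argument is sound.
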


\begin{proof}
In this proof, 
to simplify the notation, we drop the dependence on $\epsi$ and $\mu$ of $\bfm^\epsi_\mu$, $m^\epsi_\mu,$ and $u^\epsi_\mu$. Accordingly, we simply write $\bfm$, $m,$ and $u$ in place of $\bfm^\epsi_\mu$, $m^\epsi_\mu,$ and $u^\epsi_\mu$, respectively.
 
By the statement of the lemma, the two following equalities hold  in $\Tt^d$:
\begin{eqnarray}
&&\displaystyle -u - H_\mu + \epsi_1(m +\Delta^{2p}m) + \epsi_2(m -\Delta m) + \beta_{\epsi_1}(m)=0, \label{eqnme}\\
&&\displaystyle m - \div(m D_p H_\mu) + (m
D_{M_{ij}} H_\mu)_{x_i x_j} -1  + \epsi_1(u +\Delta^{2p}u) + \epsi_2(u -\Delta u)= 0,\qquad \label{eqnue}
\end{eqnarray}
where $H_\mu$, $D_p
H_\mu,$ and $
D_{M_{ij}} H_\mu$ are computed at $(x, Du(x), D^2
u(x),m(x), h( \bfm)(x))$.

We multiply  \eqref{eqnme} by $(m-\epsi_1-1)$ 
and \eqref{eqnue} by $u$. 
Next, adding the resulting expressions, integrating 
 over $\Tt^d$, and integrating by parts yield
{\setlength\arraycolsep{0.5pt}
\begin{eqnarray}
&&\displaystyle\int_{\Tt^d} m  \big(- H_\mu +D_p H_\mu\cdot Du + D_M H_\mu : D^2u\big)\,\dx +(\epsi_1 +1)\int_{\Tt^d} H_\mu \,\dx+ \int_{\Tt^d}
-\beta_{\epsi_1}(m)\,\dx \nonumber\\
 &&\quad+ \int_{\Tt^d}
\beta_{\epsi_1}
(m)(m-\epsi_1) \,\dx+\epsi_1 \int_{\Tt^d} \left[u^2
+ (\Delta^p u)^2 + m^2
+ (\Delta^p m)^2 
\right]\dx\qquad \nonumber\\
&&\quad\displaystyle +\epsi_2
\int_{\Tt^d} \big(u^2 + |D
u|^2 + m^2
+ |D m|^2 
\big) \,\dx \nonumber\\
&&  \qquad = -\epsi_1 \int_{\Tt^d} u\,\dx + (\epsi_1 +\epsi_2)(\epsi_1 +1) \int_{\Tt^d}
m\,\dx   \leq \frac{\epsi_1}{2} \int_{\Tt^d} u^2\,\dx + \frac{\epsi_1 +\epsi_2}{2} \int_{\Tt^d}
m^2\,\dx + \frac{5}{2}. \nonumber\\
&& \label{onape11}
\end{eqnarray}}%

Let \(c\), \(C_1\), \(C_2\), and \(C_3\) be as in Assumptions~\eqref{G:g+}
and \eqref{bddH} and as in Remark~\ref{Rmk: oH}; let
\( \mathfrak{c}:= \max\{C_1, 
C_3\} \), and set
{\setlength\arraycolsep{0.5pt}%
\begin{eqnarray*}
F^\epsi_\mu :=&& \frac 1 \fc \int_{\Tt^d}
|Du|^\gamma m^{1-\tau}\dx
+\frac 1 \fc\int_{\Tt^d}
|Du|^\gamma m^{-\tau}
\, \dx \\ 
&&+(1-\mu)C_2 \int_{\Tt^d} m g_2(x,m,h(\bfm))\,\dx
+ \int_{\Tt^d}
-\beta_{\epsi_1}(m)\,\dx     \nonumber\\ 
 &&+ \int_{\Tt^d}
\beta_{\epsi_1}
(m)(m-\epsi_1) \,\dx  
+\frac{\epsi_1}{2} \int_{\Tt^d} \left[ u^2
+ (\Delta^p u)^2 + m^2
+ (\Delta^p m)^2 
\right] \dx\qquad \nonumber\\
&&\displaystyle +\frac{\epsi_2}{2}
\int_{\Tt^d} \big(u^2 + |D
u|^2 + m^2
+ |D m|^2 
\big) \,\dx. 
\end{eqnarray*}}%
Due to \eqref{G:g+} and the definition of $\beta_{\epsi_1}(\cdot)$,  the integral terms defining $F^\epsi_\mu$  are nonnegative.
 
Using in \eqref{onape11} the Assumptions \eqref{H:ae lb} and \eqref{H:int b} for \(H\) and \(\widetilde
H\),
it follows that
{\setlength\arraycolsep{0.5pt}
\begin{eqnarray}
&&F^\epsi_\mu + (1-\mu)C_2  \int_{\Tt^d} m g_1(x,m,h(\bfm))\,\dx
+ \mu \intt m^2\,\dx \nonumber \\
&&\qquad\leq  \frac{5}{2} + 2\fc+(\fc+2)  
 \int_{\Tt^d} m\,\dx+ 2\fc  \int_{\Tt^d}  g_2(x,m,h(\bfm))\,\dx  \nonumber\\
 &&\qquad\qquad + C_3(\epsi_1 +1)(1-\mu)  \int_{\Tt^d}  g_1(x,m,h(\bfm))\,\dx \nonumber\\
&&\qquad\leq  \fc_1+ \fc_2 \mu \int_{\Tt^d} m\,\dx + \fc_2 (1-\mu) \int_{\Tt^d} m\,\dx+ \fc_3(1-\mu)\int_{\Tt^d}  g_1(x,m,h(\bfm))\,\dx,\qquad\label{onape12}
\end{eqnarray}}%
where $\fc_1 := \frac{5}{2}  
+ 2\fc + 2c\fc$, $\fc_2:=   
\fc+2+ 2c\fc$, and $\fc_3
\in \{C_3, 2C_3\}$, and where we also used \eqref{G:sumg} and \eqref{G:g+}. Using now \eqref{G:g ub}   with $\delta := \frac{C_2}{2(\fc_2+ \fc_3)}$, we conclude that
{\setlength\arraycolsep{0.5pt}
\begin{eqnarray}\label{onape13}
\begin{aligned}
F^\epsi_\mu  + \frac{(1-\mu)C_2}{2}   \int_{\Tt^d} m g_1(x,m,h(\bfm))\,\dx+\frac\mu 2\intt m^2\,\dx \leq \fc_1+ \frac{\fc_2^2}{2}+ (\fc_2+ \fc_3)C_{\delta} .\qquad
\end{aligned}
\end{eqnarray}}%
Finally, by \eqref{G:g lb}, we obtain
{\setlength\arraycolsep{0.5pt}%
\begin{eqnarray*}
\begin{aligned}
0\leq F^\epsi_\mu \leq \fc_1+ \frac{\fc_2^2}{2}+ (\fc_2+ \fc_3)C_{\delta}
+ \frac{C_2}{2}R , \end{aligned}
\end{eqnarray*}}%
and this completes the proof of \eqref{ape1}. 
\end{proof}


\begin{remark}
\label{Rmk:apemu=0}
If $\mu=0$, then  \eqref{onape13} and the condition
\(F^\epsi_0 \geq 0\) yield
{\setlength\arraycolsep{0.5pt}
\begin{eqnarray*}
&&
\int_{\Tt^d}  m_0^\epsi g_1(x,m^\epsi_0, h(\bfm^\epsi_0))\,\dx \leq
C, 
\end{eqnarray*}}%
where   \(C\) is a positive constant that depends only on the problem
data.
Moreover, using   \eqref{G:g ub} with $\delta =
\frac{C_2}{2\fc_2}$ in \eqref{onape12} first, and
then \eqref{G:g lb} together with the condition
\(F^\epsi_0 \geq 0\), it follows that
{\setlength\arraycolsep{0.5pt}
\begin{eqnarray*}
&&
\int_{\Tt^d} - g_1(x,m^\epsi_0, h(\bfm^\epsi_0))\,\dx \leq
C, 
\end{eqnarray*}}%
where   \(C\) is a positive constant that depends
only on the problem
data. These two estimates  are uniform in \(\epsi\) and play a major role 
in the study of the limit of \eqref{RP}
as $\epsi_1\to 0^+$ and $\epsi_2\to0^+$.
\end{remark}



In what follows,  if $\epsi\in (0,1)^2$ and
$\mu \in \Lambda^\epsi
$, where  $\Lambda^\epsi$ is
the set defined
in \eqref{Lambda}, then $(m^\epsi_\mu,u^\epsi_\mu)$ represents
an arbitrary solution
to \eqref{Pe}, which belongs to \(D(A^\epsi_\mu)  \cap{C^\infty}(\Tt^d)
\times  {C^\infty}(\Tt^d)\).
The next lemma provides a uniform bound with
respect to \(\mu \in [0,1]\) on the infimum
of such solutions.

\begin{lem}\label{Lem:m>0 mu}
Assume that Assumptions \eqref{g} and \eqref{bddH} hold. Let $\epsi\in (0,1)^2$ be such that $\Lambda^\epsi \not= \emptyset$, where  $\Lambda^\epsi$ is the set defined
in \eqref{Lambda}. Then, 
\begin{eqnarray}\label{m>0 mu}
\begin{aligned}
\inf_{ \mu \in \Lambda^\epsi}\inf_{\Tt^d} m^\epsi_\mu >0.
\end{aligned}
\end{eqnarray}
\end{lem}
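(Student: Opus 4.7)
My plan is to exploit the fact that the estimates in Lemma~\ref{Lem:ape} are uniform in $\mu$, with constants depending only on the problem data. Two of them are decisive here:
\begin{equation*}
\int_{\Tt^d} -\beta_{\epsi_1}(m^\epsi_\mu)\,\dx \leq C \qquad \text{and} \qquad \epsi_1 \int_{\Tt^d}\!\big[(m^\epsi_\mu)^2 + (\Delta^p m^\epsi_\mu)^2\big]\dx \leq C.
\end{equation*}
The second bound yields $\|m^\epsi_\mu\|_{W^{2p,2}(\Tt^d)} \leq C(\epsi)$ uniformly in $\mu$. Since $2p-4 > \tfrac{d}{2}+1$, Sobolev embedding on the torus gives in particular $W^{2p,2}(\Tt^d)\hookrightarrow C^1(\Tt^d)$, so there exists $K=K(\epsi)>0$, independent of $\mu \in \Lambda^\epsi$, such that $\|Dm^\epsi_\mu\|_{L^\infty(\Tt^d)} \leq K$.

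I would then argue by contradiction. Let $\delta:=\inf_{\Tt^d} m^\epsi_\mu$, attained at some $x_0\in\Tt^d$. If $\delta\geq \epsi_1/4$, \eqref{m>0 mu} holds trivially, so suppose $\delta<\epsi_1/4$. The Lipschitz control on $m^\epsi_\mu$ gives
\begin{equation*}
m^\epsi_\mu(x) \leq \delta + K|x-x_0| \leq 2\delta < \frac{\epsi_1}{2}\qquad \text{for all } x\in B_r(x_0),\ \ r:=\frac{\delta}{K},
\end{equation*}
(and, for $\delta$ small, this ball is an honest ball in $\Tt^d$). By the definition of $\beta_{\epsi_1}$, which equals $-s^{-q}$ on $(0,\epsi_1/2]$, we obtain
\begin{equation*}
C\;\geq\;\int_{\Tt^d}-\beta_{\epsi_1}(m^\epsi_\mu)\,\dx\;\geq\;\int_{B_r(x_0)} (m^\epsi_\mu)^{-q}\,\dx\;\geq\;\omega_d\, r^d\,(2\delta)^{-q}\;=\;\omega_d\, K^{-d}\, 2^{-q}\,\delta^{d-q},
\end{equation*}
where $\omega_d$ denotes the volume of the unit Euclidean ball in $\RR^d$.

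Because $q>d$ in the definition of $\beta_{\epsi_1}$, the exponent $d-q$ is negative and the right-hand side diverges as $\delta\to 0^+$. This forces $\delta \geq \delta_0(\epsi)>0$, with $\delta_0$ depending only on $\epsi$ and the problem data, which is exactly \eqref{m>0 mu}. The only delicate point is verifying that both of the controlling estimates are uniform in $\mu$, which is immediate from \eqref{ape1}; the rest is a pure volume-versus-blow-up argument in which the choice $q>d$ is precisely what is needed.
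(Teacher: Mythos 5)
Your proposal is correct and takes essentially the same approach as the paper: extract a uniform $W^{2p,2}$ bound (hence a uniform Lipschitz bound on $m^\epsi_\mu$) from the $\epsi_1$-weighted estimate in Lemma~\ref{Lem:ape}, combine it with the uniform $L^1$ bound on $\beta_{\epsi_1}(m^\epsi_\mu)$, and run a volume-versus-blow-up argument near the minimum that succeeds precisely because $q>d$. The only differences are cosmetic (ball versus cube near the minimum point, and a direct dichotomy versus the paper's extraction of a sequence $\mu_n$), and you correctly flag the one minor technicality about the ball fitting inside the torus.
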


\begin{proof}

We begin by proving that
\begin{eqnarray}\label{Lip(m) mu}
\begin{aligned}
L^{\epsi_1}:=\sup_{\mu \in \Lambda^\epsi} \Vert m^\epsi_\mu
\Vert_{W^{1,\infty}(\Tt^d)}<+\infty.
\end{aligned}
\end{eqnarray}
Because the integral terms in \eqref{ape1}
are non-negative, we deduce that
\begin{eqnarray*}
\begin{aligned}
\sup_{\mu \in \Lambda^\epsi} \bigg[\epsi_1 \int_{\Tt^d}
\big((u^\epsi_\mu)^2 + (\Delta^p u^\epsi_\mu)^2
+ (m^\epsi_\mu)^2
+ (\Delta^p m^\epsi_\mu)^2 
\big) \,\dx \bigg] \leq C
\end{aligned}
\end{eqnarray*}
for some positive constant, $C$,
that depends only on the problem
data.
From this estimate and using integration by
parts, we conclude that
\begin{eqnarray}
\begin{aligned}\label{W2p}
\sup_{\mu \in \Lambda^\epsi}
 \big[ \Vert u^\epsi_\mu\Vert_{W^{2p,2}(\Tt^d)}
+ \Vert m^\epsi_\mu\Vert_{W^{2p,2}(\Tt^d)}
\big] \leq C
 \end{aligned}
\end{eqnarray}
for some positive constant, $C$,
that depends only on the problem
data and on \(\epsi\).
Hence, using \eqref{SobEmb},
$u^\epsi_\mu$, $m^\epsi_\mu\in C^{4,\lambda}(\Tt^d)$ with 
\begin{eqnarray}\label{W4infty}
\begin{aligned}
\sup_{\mu \in \Lambda^\epsi}
 \big[ \Vert u^\epsi_\mu\Vert_{W^{4,\infty}(\Tt^d)}
+ \Vert m^\epsi_\mu\Vert_{W^{4,\infty}(\Tt^d)}
\big] \leq C
\end{aligned}
\end{eqnarray}
for some positive constant, $C$,
that depends only on the problem
data and on \(\epsi\).
In particular, \eqref{Lip(m) mu} holds.

We now establish \eqref{m>0 mu}. By contradiction, assume that $\inf_{\mu \in
\Lambda^\epsi}\inf_{\Tt^d} m^\epsi_\mu=0$.
Set $\bar m^\epsi_\mu := \inf_{\Tt^d} m^\epsi_\mu
$ and let 
$(\mu_n)_{n\in\Nn}\subset
\Lambda^\epsi$ be such that
\begin{eqnarray*}
\begin{aligned}
\lim_{n\to\infty} \bar m^\epsi_{\mu_n} =0.
\end{aligned}
\end{eqnarray*}
Note that $\bar m^\epsi_{\mu_n} > 0$ by the definition
of $D(A^\epsi_{\mu_n})$ (see
\eqref{domAe}).
Because $m^\epsi_{\mu_n}$ is continuous in $\Tt^d$,
there exists $\bar x^\epsi_{\mu_n} \in \Tt^d$ satisfying
 $m^\epsi_{\mu_n}(\bar x^\epsi_{\mu_n}) = \bar m^\epsi_{\mu_n}$. Let $Q(\bar x^\epsi_{\mu_n},
\bar
m^\epsi_{\mu_n})$ be
the cube centered at \(\bar x^\epsi_{\mu_n} \)
and with side length \(m^\epsi_{\mu_n}\).
For all  sufficiently large $n\in\Nn$, $0<\bar
m^\epsi_{\mu_n}\leq \min\{ \frac{\epsi_1}{4},
\frac{\epsi_1}{2L^{\epsi_1}}\}$. Therefore, for all such $n\in\Nn$ and for a.e. 
 $x\in Q(\bar x^\epsi_{\mu_n},
\bar
m^\epsi_{\mu_n})$,
we have
\begin{eqnarray*}
\begin{aligned}
0< m^\epsi_{\mu_n} (x) \leq \frac{\bar
m^\epsi_{\mu_n}
L^{\epsi_1}}{2} +
\bar m^\epsi_{\mu_n} \leq \frac{\epsi_1}{2}.
\end{aligned}
\end{eqnarray*}
Consequently, using  \eqref{ape1},
the non-negativeness of $-\beta_{\epsi_1}(m^\epsi_{\mu_n}
)$,   and the identity
$\beta_{\epsi_1}(s) = -s^{-q}$ if $0<s \leq \frac{\epsi_1}{2}$,
we obtain
\begin{eqnarray*}
\begin{aligned}
C
&\geq \int_{\Tt^d} -\beta_{\epsi_1}(m^\epsi_{\mu_n}
)\,\dx
\geq \int_{Q(\bar x^\epsi_{\mu_n},\bar m^\epsi_{\mu_n})}
\frac{1}{(m^\epsi_{\mu_n})^q}
\,\dx \\
&\geq   \frac{ (\bar
m^\epsi_{\mu_n})^d}{
\Big(\frac{\bar
m^\epsi_{\mu_n} L^{\epsi_1}}{2}
+ \bar
m^\epsi_{\mu_n}\Big)
^q} =\Big(\frac{ 2}{L^{\epsi_1}+2}\Big)^q
\frac{1}{
(\bar m^\epsi_{\mu_n})^{q-d}},
\end{aligned}
\end{eqnarray*}
where   \(C\) is a positive constant that depends
only on the problem
data. Because  $q>d$ and $\lim_{n\to\infty} \bar m^\epsi_{\mu_n} =0$, we have a contradiction when
we let $n\to\infty$ in the estimate above. 
\end{proof}


\begin{remark}\label{Rmk:bds m}
Under the conditions of the previous lemma,  the
estimates in \eqref{m>0 mu} and \eqref{W4infty} imply that
\begin{eqnarray*}
\begin{aligned}
\underbar{m}^\epsi:= \inf_{\mu \in \Lambda^\epsi}\inf_{\Tt^d} m^\epsi_\mu \quad\hbox{and}\quad {\overline{m}}^\epsi:= \sup_{\mu \in \Lambda^\epsi}\sup_{\Tt^d}
m^\epsi_\mu
\end{aligned}
\end{eqnarray*}
satisfy
\begin{eqnarray}\label{infsupm}
\begin{aligned}
0<\underline m^\epsi \leq \overline m^\epsi \leq C,
\end{aligned}
\end{eqnarray}
where   \(C\) is a positive constant that depends
only on the problem
data and on \(\epsi\). 
\end{remark}

Next, we prove uniform bounds in \(\mu\in
[0,1]\) for classical solutions to \eqref{Pe} with respect to the norm \(\Vert \cdot \Vert_{W^{k,\infty}(\Tt^{d})}\).
These bounds rely on the previous two lemmas and  play an important role in Subsection~\ref{Subs:L=[0,1]}, where we establish the existence of classical solutions to  \eqref{Pe}. 


\begin{lem}\label{Lem:Wkinfty}
Suppose that Assumptions \eqref{h},  \eqref{g},  \eqref{bddH}, and \eqref{H:smooth} are satisfied. Let $\epsi\in (0,1)^2$ be such that $\Lambda^\epsi
\not= \emptyset$, where  $\Lambda^\epsi$ is
the set defined
in \eqref{Lambda}. Then, for all $k\in\Nn_0$, there exists a positive constant, $C$, that depends
only on the problem
data and on \(\epsi\),
such that
\begin{eqnarray*}
\begin{aligned}
\sup_{\mu \in \Lambda^\epsi} \big(\Vert m^\epsi_\mu
\Vert_{W^{k,\infty}(\Tt^d)} + \Vert u^\epsi_\mu
\Vert_{W^{k,\infty}(\Tt^d)}\big) \leq C.
\end{aligned}
\end{eqnarray*}
\end{lem}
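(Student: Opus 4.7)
The strategy is a bootstrap argument based on the elliptic regularity of the operator $I + \Delta^{2p}$, which appears in both equations of \eqref{Pe} with coefficient $\epsi_1$. The starting point is the uniform $W^{2p,2}$ bound \eqref{W2p}, together with the two-sided enclosure \eqref{infsupm} of $m^\epsi_\mu$, all uniform in $\mu \in \Lambda^\epsi$. I would then iterate a single Sobolev upgrade and finish via the Morrey embedding to reach arbitrarily high $W^{k,\infty}$ regularity.

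First, I would rewrite the two equations in \eqref{Pe} by isolating the leading linear term $\epsi_1(I + \Delta^{2p})$ on the left and moving everything else to the right. On the Fourier side over $\Tt^d$, the symbol of $I + \Delta^{2p}$ is $1+|k|^{4p}$, so there is a constant $C = C(d,p)>0$ with $\Vert v\Vert_{W^{s+4p,2}(\Tt^d)} \leq C\Vert (I+\Delta^{2p})v\Vert_{W^{s,2}(\Tt^d)}$ for every $s \geq 0$; this is the key linear estimate. Now suppose we already have $\Vert m^\epsi_\mu\Vert_{W^{N,2}(\Tt^d)} + \Vert u^\epsi_\mu\Vert_{W^{N,2}(\Tt^d)} \leq C$ uniformly in $\mu \in \Lambda^\epsi$ for some $N \geq 2p$, so that $N-2 > d/2+1$. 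Because $W^{N-2,2}(\Tt^d)$ is a Banach algebra that embeds in $L^\infty(\Tt^d)$, Assumption \eqref{H:smooth} together with standard composition (Moser) estimates yields a $W^{N-2,2}$ bound on $H_\mu(\cdot, Du^\epsi_\mu, D^2 u^\epsi_\mu, m^\epsi_\mu, h(\bfm^\epsi_\mu))$ and on the derivatives $D_p H_\mu$ and $D_{M_{ij}} H_\mu$; the input $h(\bfm^\epsi_\mu)$ is controlled at the same order by Assumption \eqref{h} and \eqref{on h(m)}. Since Lemma \ref{Lem:m>0 mu} and \eqref{infsupm} confine $m^\epsi_\mu$ to a fixed compact subinterval of $(0,\infty)$ independent of $\mu$, the nonlinearity $\beta_{\epsi_1}(m^\epsi_\mu)$ is likewise controlled in $W^{N,2}$ via the $C^\infty$ regularity of $\beta_{\epsi_1}$ away from $0$. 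Hence the right-hand side of the $m$-equation lies in $W^{N-2,2}$, and, since the $u$-equation contains the term $(m^\epsi_\mu D_{M_{ij}} H_\mu)_{x_i x_j}$ (two additional derivatives of a $W^{N-2,2}$ object), the right-hand side of the $u$-equation lies in $W^{N-4,2}$. Applying the elliptic estimate with the factor $1/\epsi_1$ then produces $\Vert m^\epsi_\mu\Vert_{W^{N+4p-2,2}(\Tt^d)} + \Vert u^\epsi_\mu\Vert_{W^{N+4p-4,2}(\Tt^d)} \leq C$.

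The standing constraint $2p-4 > d/2+1$ forces $p \geq 3$, so each bootstrap step gains at least $4p-4 \geq 8$ derivatives in both unknowns, and finitely many iterations push the $W^{N,2}$ regularity beyond any prescribed level. Morrey's embedding then converts such bounds into $W^{k,\infty}$ bounds for any $k \in \NN_0$, with constants depending only on $k$, the problem data, and $\epsi$, but independent of $\mu$, because every quantity propagated through the bootstrap (the base bound \eqref{W2p}, the enclosure \eqref{infsupm}, and the elliptic factor $1/\epsi_1$) is already uniform in $\mu$. The main technical care lies in the composition estimates: at each step one must verify that the Sobolev index of every factor exceeds $d/2$ so that products can be handled via the algebra property and so that compositions with the smooth nonlinearities $H$, $h$, and $\beta_{\epsi_1}$ remain controlled. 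This is a routine but slightly tedious bookkeeping exercise once the bootstrap scheme is in place.
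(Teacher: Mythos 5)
Your proposal is correct and follows essentially the same strategy as the paper's proof: isolating $\epsi_1(I+\Delta^{2p})$ and bootstrapping through the elliptic estimate, with the algebra property of $W^{s,2}(\Tt^d)$ for $s>d/2$, the bound \eqref{on h(m)} on $h$, the uniform enclosure \eqref{infsupm} making $\beta_{\epsi_1}$ harmless, and the $W^{\kappa,\infty}$ control of $H_\mu$ on the compact range from \eqref{H:smooth}, all uniform in $\mu$, followed by Sobolev/Morrey embedding. The only cosmetic difference is that you state the general inductive step $W^{N,2}\to W^{N+4p-4,2}$ directly, whereas the paper spells out two explicit iterations and then says ``iterate.''
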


\begin{proof}
To simplify the notation, we  drop the
dependence on $\epsi$ and $\mu$ of $\bfm^\epsi_\mu$, $m^\epsi_\mu,$
and $u^\epsi_\mu$. Accordingly, we 
write $\bfm$, $m$, and $u$ in place of $\bfm^\epsi_\mu$, $m^\epsi_\mu$,
and $u^\epsi_\mu$, respectively.
Moreover,  \(C\) is a positive constant  that depends
only on the problem
data and on \(\epsi\) and whose value may change from one expression to another.

Arguing as in the proof of Lemma~\ref{Lem:m>0 mu}
and using the hypothesis, we conclude that $m$, $u\in W^{4,\infty}(\Tt^d) \cap W^{2p,2}(\Tt^d)$,
that  \eqref{W2p} and \eqref{W4infty} hold, and that 
\begin{eqnarray}
&&\displaystyle  m +\Delta^{2p}m =f_1(m,u),
\label{eqnme1}\\
&&\displaystyle   u
+\Delta^{2p}u  = f_2(m,u), \label{eqnue1}
\end{eqnarray}
where
\begin{eqnarray}
&&\displaystyle f_1(m,u):= \frac{1}{\epsi_1}\Big[u + 
H_\mu (x,Du,D^2u,m,h(\bfm)) - \beta_{\epsi_1}(m) -\epsi_2(m -\Delta m)\Big],
\label{f1mu}\\
&&\displaystyle f_2(m,u):=\frac{1}{\epsi_1}\Big[- m + \div(m D_pH_\mu (x,Du,D^2u,m,h(\bfm))) \nonumber\\ 
&&\hskip20mm-(m
D_{M_{ij}} H_\mu (x,Du,D^2u,m,h(\bfm)))_{x_i x_j} +1 -\epsi_2(u- \Delta u)\Big]. \label{f2mu}
\end{eqnarray}

By \eqref{on h(m)}, \eqref{SobEmb}, and \eqref{W2p}, we also have that
{\setlength\arraycolsep{0.5pt}
\begin{eqnarray}
\begin{aligned}
\Vert h(\bfm)\Vert_{L^\infty(\Tt^d)} \leq C \Vert h(\bfm)\Vert_{W^{2p-4,2}(\Tt^d)} \leq C(1+ \Vert m\Vert_{W^{2p-4,2}(\Tt^d)} )\leq C. \label{h(m)bdd}
\end{aligned}
\end{eqnarray}}%
Hence, when
{\setlength\arraycolsep{0.5pt}%
\begin{eqnarray*}
\begin{aligned}
K:=\{(x,Du(x), D^2u(x), m(x), h(\bfm)(x))\!:\, x\in \Tt^d \}
\end{aligned}
\end{eqnarray*}}%
is thus defined, the estimates  \eqref{W4infty}, \eqref{infsupm}, and \eqref{h(m)bdd} yield that $\overline K$ is a compact subset of $\Tt^d \times \RR^d \times \RR^{d \times d} \times \EE \times \RR$ and there exists a ball, $B(0,C)$, centered
at the origin and of radius $C$ such that $\overline K \subset B(0,C)$. Using in addition Assumption \eqref{H:smooth}, it follows that,
for all $\kappa \in \NN_0$,
{\setlength\arraycolsep{0.5pt}
\begin{eqnarray}\label{Hmubdd}
\begin{aligned}
\Vert H_\mu\Vert_{W^{\kappa,\infty}(\overline K)} \leq \Vert H \Vert_{W^{\kappa,\infty}(\overline K)} + 
\Vert \widetilde H\Vert_{W^{\kappa,\infty}(\overline K)} \leq C.
\end{aligned}
\end{eqnarray}}%

Next, we recall that if  $q>\frac{d}{2}$, then   $W^{q,2}(\Tt^d)$ is an algebra. Thus, because
 $p$ is such that
$2p-4>\frac{d}{2} +1$, from Remark~\ref{Rmk:bds m}, \eqref{W2p}, \eqref{on h(m)}, and \eqref{Hmubdd}, it follows that 
\begin{eqnarray*}
&&f_1(m,u)\in W^{2,2}(\Tt^d),\quad \Vert f_1(m,u) \Vert_{W^{2,2}(\Tt^d)}\leq c=c(C,\Vert 
\beta_{\epsi_1}\Vert_{W^{2,2}([\underline m^\epsi.\overline
m^\epsi])}), \\\
&& f_2(m,u)\in L^2(\Tt^d),\quad \Vert
f_2(m,u) \Vert_{L^{2}(\Tt^d)}\leq \bar c= \bar c(C).
\end{eqnarray*}
Therefore, \eqref{eqnme1}, \eqref{eqnue1}, and the elliptic regularity theory yield
\begin{eqnarray*}
&&m \in W^{4p+2,2}(\Tt^d) \enspace \hbox{with} \enspace \Vert m \Vert_{W^{4p+2,2}(\Tt^d)}\leq c_1=c_1(c),\\
&& u \in W^{4p,2}(\Tt^d) \enspace \hbox{with}
\enspace \Vert
u \Vert_{W^{4p,2}(\Tt^d)}\leq \bar c_1=\bar
c_1(\bar
c).
\end{eqnarray*}
Then, going back to \eqref{f1mu} and \eqref{f2mu}, and arguing as above, we have
\begin{eqnarray*}
&&f_1(m,u)\in W^{4p-2,2}(\Tt^d),\quad \Vert
f_1(m,u) \Vert_{W^{4p-2,2}(\Tt^d)}\leq c_2=c_2(c_1, \bar c_1,\Vert
\beta_{\epsi_1}\Vert_{W^{4p-2,2}([\underline
m^\epsi.\overline
m^\epsi])}), \\\
&& f_2(m,u)\in W^{4p-4,2}(\Tt^d),\quad \Vert
f_2(m,u) \Vert_{W^{4p-4,2}(\Tt^d)}\leq \bar
c_2= \bar c_2(c_1, \bar c_1).
\end{eqnarray*}
Arguing as before,
we conclude that 
\begin{eqnarray*}
&&m \in W^{8p-2,2}(\Tt^d) \enspace \hbox{with}
\enspace \Vert
m \Vert_{W^{8p-2,2}(\Tt^d)}\leq c_3=c_3(c_2,\bar
c_2),\\
&& u \in W^{8p-4,2}(\Tt^d) \enspace \hbox{with}
\enspace \Vert
u \Vert_{W^{8p-4,2}(\Tt^d)}\leq \bar
c_3=\bar
c_3(c_2,\bar
c_2).
\end{eqnarray*}
The conclusion follows
by iterating this ``bootstrap" argument and using the Sobolev embedding theorem. 
\end{proof}


\subsection{Existence of classical solutions to the perturbed mean-field game}\label{Subs:L=[0,1]}

Here, we prove that $\Lambda^\epsi=[0,1]$
for all $\epsi\in (0,1)^2$.
Consequently, by the definition of 
$\Lambda^\epsi$ in \eqref{Lambda}, 
we have that \eqref{RP} has 
a solution, $(u^\epsi, m^\epsi)$, with $m^\epsi>0$. 
To prove that $\Lambda^\epsi=[0,1]$, 
we show that
$\Lambda^\epsi$ is a nonempty set, relatively closed
and open in $[0,1]$. These properties are established
in the next three lemmas. 

\begin{lem}     
        Let $\Lambda^\epsi$ be the set defined in \eqref{Lambda}. Then, $\Lambda^\epsi \not= \emptyset$.
\end{lem}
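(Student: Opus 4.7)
The plan is to verify directly that $1 \in \Lambda^\epsi$ by exhibiting an explicit constant classical solution of \eqref{Pe} at $\mu = 1$. The crucial observation is that at $\mu = 1$ the Hamiltonian $H_\mu$ collapses to $\widetilde H(p,m) = \gamma^{-1}(1+|p|^2)^{\gamma/2} m^{-\tau} - m$, which is independent of $x$, of the Hessian variable $M$, and of the nonlocal argument $\theta$; in particular, the operator $h$ and all spatial variation drop out of the system \eqref{Pe} at $\mu = 1$.

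I would first seek a pair of constants $(m_0, u_0) \in (0, \infty) \times \RR$. Because every derivative of a constant vanishes and $D_p \widetilde H(0, m_0) = 0$, the PDE system \eqref{Pe} reduces to two scalar algebraic equations. Solving the second for $u_0 = (1-m_0)/(\epsi_1 + \epsi_2)$ and substituting into the first yields the single scalar equation $\Phi(m_0) = 0$, where
\[
\Phi(m) := \frac{m - 1}{\epsi_1 + \epsi_2} - \frac{1}{\gamma m^\tau} + (1 + \epsi_1 + \epsi_2)\, m + \beta_{\epsi_1}(m).
\]

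Next I would invoke the intermediate value theorem. The function $\Phi$ is continuous on $(0, \infty)$. As $m \to 0^+$, the penalization term $\beta_{\epsi_1}(m) = -m^{-q}$ (with $q > d \geq 1$) drives $\Phi(m)$ to $-\infty$, while the remaining terms stay bounded from above. As $m \to +\infty$, $\beta_{\epsi_1}(m) = 0$ and the linear growth in $m$ forces $\Phi(m) \to +\infty$. Hence there exists $m_0^\ast > 0$ with $\Phi(m_0^\ast) = 0$. Setting $u_0^\ast := (1 - m_0^\ast)/(\epsi_1 + \epsi_2)$, the constant pair $(m_0^\ast, u_0^\ast)$ is smooth on $\Tt^d$ with $\inf_{\Tt^d} m_0^\ast = m_0^\ast > 0$, so it belongs to $D(A^\epsi_1) \cap C^\infty(\Tt^d) \times C^\infty(\Tt^d)$ and solves \eqref{Pe} at $\mu = 1$ by construction, establishing $1 \in \Lambda^\epsi$. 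I do not foresee a genuine obstacle here; the only sensitive point is the endpoint analysis of $\Phi$, which is immediate from the explicit forms of $\widetilde H$ and $\beta_{\epsi_1}$.
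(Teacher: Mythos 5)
Your proof is correct and follows essentially the same route as the paper: at $\mu=1$ the system has no spatial dependence, so one looks for a constant solution, solves the second (algebraic) equation for $u_0$ in terms of $m_0$, substitutes into the first, and applies the intermediate value theorem to the resulting scalar function. The only cosmetic difference is that you parametrize by $m_0$ while the paper parametrizes by the constant value $\bar c$ of $u$ (related by $m_0 = 1 - \bar c(\epsi_1+\epsi_2)$); incidentally, your $\Phi$ carries the coefficient $(1+\epsi_1+\epsi_2)$ on the linear term, coming from the $-m$ in $\widetilde H$ together with the $\epsi_1 m + \epsi_2 m$ regularization, which the paper's displayed $f(c)$ records as $(\epsi_1+\epsi_2)$ only (an apparent typo that does not affect the endpoint limits or the conclusion).
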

\begin{proof}
To prove the lemma, we show that $1\in \Lambda^\epsi$.
We set
\[
f(c):=-c-\frac{1}{\gamma(1-c(\epsi_1+ \epsi_2))^\tau}  + (\epsi_1+\epsi_2)(1-c(\epsi_1+ \epsi_2)) + \beta_{\epsi_1}(1-c(\epsi_1+ \epsi_2)).
\]
Recall that
$\gamma\geq1>0$,  $\tau\geq0$, $\beta_{\epsi_1}(s) = 0$ if $s\geq \epsi_1,$ and $\lim_{s\to0^+} \beta_{\epsi_1}(s)=-\infty$.
Consequently, we have that 
$\lim_{c\to-\infty} f(c) = +\infty$ and  $\lim_{c\to(\frac{1}{\epsi_1+ \epsi_2})^-} f(c) = -\infty$. 
Therefore, there exists 
 $\bar c\in (-\infty,\frac{1}{\epsi_1+\epsi_2})$
 such that $f(\bar c)=0$. 

Set $m^\epsi_1:= 1-\bar c(\epsi_1+ \epsi_2)$ and $u^\epsi_1:= \bar c$. Then, $(m^\epsi_1,u^\epsi_1)\in
D(A^\epsi_1)  \cap C^\infty(\Tt^d)\times C^\infty(\Tt^d)$  and 
\begin{eqnarray*}
\begin{aligned}
A^\epsi_1\begin{bmatrix}m^\epsi_1 \\
u^\epsi_1 \\
\end{bmatrix} = \begin{bmatrix} -\bar c- 
\frac{1}{\gamma(1-\bar c(\epsi_1+ \epsi_2))^\tau}
 + (\epsi_1+\epsi_2)(1-\bar c(\epsi_1+ \epsi_2))
+ \beta_{\epsi_1}(1-\bar c(\epsi_1+ \epsi_2)) \\
1-\bar c(\epsi_1+ \epsi_2) +\epsi_1 \bar c +\epsi_2 \bar c -1  \\
\end{bmatrix} = \begin{bmatrix}0 \\
0  \\
\end{bmatrix}.
\end{aligned}
\end{eqnarray*}
This completes the proof. 
\end{proof}

\begin{lem}
Suppose that Assumptions \eqref{h}, \eqref{g},  \eqref{bddH}, and \eqref{H:smooth} hold.
 Let $\Lambda^\epsi$ be the set defined
in \eqref{Lambda}. Then, $\Lambda^\epsi$ is a closed subset of $[0,1]$.
\end{lem}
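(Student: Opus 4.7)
The plan is to extract a smooth limit from a convergent sequence of parameters and pass to the limit in the equation $A^\epsi_{\mu_n}(m_n,u_n) = 0$. Take $(\mu_n)_{n\in\NN} \subset \Lambda^\epsi$ with $\mu_n \to \mu_\infty \in [0,1]$ and, for each $n$, a classical solution $(m_n,u_n) := (m^\epsi_{\mu_n}, u^\epsi_{\mu_n}) \in D(A^\epsi_{\mu_n}) \cap C^\infty(\Tt^d) \times C^\infty(\Tt^d)$. The goal is to produce $(m_\infty, u_\infty) \in D(A^\epsi_{\mu_\infty}) \cap C^\infty(\Tt^d) \times C^\infty(\Tt^d)$ solving $A^\epsi_{\mu_\infty}(m_\infty,u_\infty) = 0$, which forces $\mu_\infty \in \Lambda^\epsi$.

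First, I would apply Lemma~\ref{Lem:Wkinfty} to obtain, for every $k\in\NN_0$, $\sup_{n}\big(\Vert m_n\Vert_{W^{k,\infty}(\Tt^d)} + \Vert u_n\Vert_{W^{k,\infty}(\Tt^d)}\big) \leq C_k$ with $C_k$ depending only on the problem data and on $\epsi$. Arzelà--Ascoli combined with a diagonal extraction then produces a (non-relabelled) subsequence and smooth limits $(m_\infty,u_\infty)$ with $m_n \to m_\infty$ and $u_n \to u_\infty$ in $C^k(\Tt^d)$ for every $k$. Lemma~\ref{Lem:m>0 mu} gives $\inf_{\Tt^d} m_n \geq \underline{m}^\epsi > 0$ uniformly in $n$, and this lower bound is preserved under uniform convergence, so $\inf_{\Tt^d} m_\infty \geq \underline{m}^\epsi > 0$, and hence $(m_\infty,u_\infty) \in D(A^\epsi_{\mu_\infty})$.

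Next, I would pass to the limit term by term in $A^\epsi_{\mu_n}(m_n,u_n) = 0$. Since $H_\mu = (1-\mu)H + \mu\widetilde H$ is affine in $\mu$, and both $H$ and $\widetilde H$ are $C^\infty$ by Assumption~\eqref{H:smooth} and Remark~\ref{Rmk: oH}, uniform convergence of the Hamiltonian and its derivatives reduces to uniform convergence of their arguments $(x, Du_n, D^2 u_n, m_n, h(\bfm_n))$. Everything except the last slot is immediate from the $C^k$-convergence above. For $h(\bfm_n) \to h(\bfm_\infty)$ I would invoke Assumption~\eqref{h}: Fréchet differentiability of $h$ on $W^{\kappa,2}(\Tt^d)$ with any $\kappa > d/2 + 1$ implies continuity, so $h(\bfm_n) \to h(\bfm_\infty)$ in $W^{\kappa,2}(\Tt^d)$, which upgrades to uniform convergence via the Sobolev embedding \eqref{SobEmb}. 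The high-order smoothing terms $\Delta^{2p} m_n$, $\Delta^{2p} u_n$, $\Delta m_n$, $\Delta u_n$ also converge uniformly, and $\beta_{\epsi_1}(m_n) \to \beta_{\epsi_1}(m_\infty)$ uniformly because the values $m_n(x)$ lie in the compact subinterval $[\underline{m}^\epsi, \overline{m}^\epsi] \subset (0,\infty)$ on which $\beta_{\epsi_1}$ is smooth. This yields $A^\epsi_{\mu_\infty}(m_\infty, u_\infty) = 0$.

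The main obstacle is not analytical but bookkeeping: verifying that the nonlocal operator $h$ behaves correctly under the smooth convergence produced by the regularizing terms. The design of the regularization---specifically the choice $2p-4 > d/2+1$ together with the coercivity of $\beta_{\epsi_1}$---was tailored precisely so that Lemmas \ref{Lem:m>0 mu} and \ref{Lem:Wkinfty} apply uniformly in $\mu \in \Lambda^\epsi$, turning the closedness of $\Lambda^\epsi$ into a compactness-plus-continuity argument.
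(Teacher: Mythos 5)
Your proposal is correct and takes essentially the same approach as the paper: both rely on Lemma~\ref{Lem:Wkinfty} for uniform high-order bounds, Lemma~\ref{Lem:m>0 mu} for the uniform positivity of $m^\epsi_{\mu_n}$, a compactness extraction (you use Arzel\`a--Ascoli in $C^k$; the paper uses Rellich--Kondrachov from $W^{4p+1,2}$ into $W^{4p,2}$ plus a.e. convergence, which is the same idea), and Assumption~\eqref{h} to pass $h(\bfm_n)\to h(\bfm_\infty)$ to the limit. The paper invokes the quantitative estimate \eqref{hFD} directly for this last step, whereas you appeal to the abstract fact that Fr\'echet differentiability implies continuity and then upgrade via the Sobolev embedding \eqref{SobEmb} --- a trivial variation.
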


\begin{proof}
Let   $\{\mu_n\}_{n\in \Nn} \subset \Lambda^\epsi$ be a sequence converging to some $\mu_0\in [0,1]$.
We claim that   $\mu_0 \in \Lambda^\epsi$. To
see this, let $\{(m^\epsi_{\mu_n}, 
u^\epsi_{\mu_n})\}_{n\in\Nn} \subset
 D(A^\epsi_{\mu_n})  \cap{C^\infty}(\Tt^d) \times
 {C^\infty}(\Tt^d)$ be such that 
\begin{eqnarray}\label{Aepsimun}
\begin{aligned}
A^\epsi_{\mu_n}\begin{bmatrix}m^\epsi_{\mu_n} \\
u^\epsi_{\mu_n} \\
\end{bmatrix} = \begin{bmatrix}0 \\
0 \\
\end{bmatrix}.
\end{aligned}
\end{eqnarray}
By Lemma~\ref{Lem:Wkinfty}, 
\begin{eqnarray*}
\begin{aligned}
\sup_{n\in\Nn} \big(\Vert m^\epsi_{\mu_n}
\Vert_{W^{4p+1,2}(\Tt^d)} + \Vert u^\epsi_{\mu_n}
\Vert_{W^{4p+1,2}(\Tt^d)}\big) \leq C
\end{aligned}
\end{eqnarray*}
for some positive
constant, $C$, that depends only on the problem
data and on \(\epsi\). Hence, by the Rellich-Kondrachov theorem, we can find  $(m^\epsi,u^\epsi)\in W^{4p+1,2}(\Tt^d)
\times W^{4p+1,2}(\Tt^d)$ such that, up to a (not relabeled) subsequence, \((m^\epsi_{\mu_n} , u^\epsi_{\mu_n} )_{n \in \NN} \) converges to \((m^\epsi, u^\epsi)\) weakly in \(W^{4p+1,2} (\Tt^d)\) and strongly in \(W^{4p,2}
(\Tt^d)\) and   
\begin{eqnarray*}
\begin{aligned}
D^\alpha m^\epsi_{\mu_n} (x) \to D^\alpha m^\epsi (x) \enspace \hbox{and}\enspace D^\alpha u^\epsi_{\mu_n} (x) \to D^\alpha
u^\epsi (x) 
\end{aligned}
\end{eqnarray*}
as $n\to\infty$ for \aev\ $x\in\Tt^d$ and for
all  $\alpha\in\Nn^d_0$ with $|\alpha| \leq
4p$. Using \eqref{hFD} with \(\bfm\) and $\bfm_0$ replaced by \(\bfm^\epsi_{\mu_n}\) and $\bfm^\epsi$, respectively, up to a further (not relabeled) subsequence, we also have that for \aev\ $x\in\Tt^d$
and for all  $\alpha\in\Nn^d_0$ with $|\alpha|
\leq 4p$,
\begin{eqnarray*}
\begin{aligned}
D^\alpha h(\bfm^\epsi_{\mu_n} )(x) \to D^\alpha
h(\bfm^\epsi )(x)   
\end{aligned}
\end{eqnarray*}  as $n\to\infty$. Moreover, using  Lemma~\ref{Lem:Wkinfty} once more and in view of Lemma~\ref{Lem:m>0 mu}, $(m^\epsi,u^\epsi)\in W^{4p+1,2}(\Tt^d)
\times W^{4p+1,2}(\Tt^d) \cap C^\infty(\Tt^d)
\times {C^\infty}(\Tt^d)$ with $\inf_{\Tt^d} m^\epsi>0$. By Assumption~\eqref{H:smooth} and because $\beta_{\epsi_1}(\cdot)$ is smooth, passing \eqref{Aepsimun} to the limit as $n\to\infty$, we conclude that 
\begin{eqnarray*}
\begin{aligned}
A^\epsi_{\mu_0}\begin{bmatrix}m^\epsi
\\
u^\epsi \\
\end{bmatrix} = \begin{bmatrix}0 \\
0 \\
\end{bmatrix}.
\end{aligned}
\end{eqnarray*}
Hence, $\mu_0\in \Lambda^\epsi$. 
\end{proof}

\begin{lem}
Under Assumptions \eqref{h}, \eqref{g}, and \eqref{bddH}--\eqref{H:mon}, the set
  $\Lambda^\epsi$  defined
in \eqref{Lambda} is an open subset of $[0,1]$.
\end{lem}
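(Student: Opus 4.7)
The plan is to apply the Implicit Function Theorem (IFT) to the map
\[
F:[0,1]\times \widetilde X \to L^2(\Tt^d)\times L^2(\Tt^d), \qquad F(\mu,(m,u)) := A^\epsi_\mu \begin{bmatrix} m \\ u \end{bmatrix},
\]
where $\widetilde X$ is an open neighborhood of $(m^\epsi_{\mu_0},u^\epsi_{\mu_0})$ inside $W^{4p,2}(\Tt^d)\times W^{4p,2}(\Tt^d)$ on which $\inf_{\Tt^d} m > 0$ (so that $\beta_{\epsi_1}(m)$ is well defined and smooth). Fix $\mu_0\in\Lambda^\epsi$ and a corresponding smooth solution $(m_0,u_0):=(m^\epsi_{\mu_0},u^\epsi_{\mu_0})$ provided by the definition of $\Lambda^\epsi$.

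First, I would verify that $F$ is of class $C^1$ on $[0,1]\times \widetilde X$. This uses Assumption~\eqref{H:smooth} (so $H$, $D_pH$, and $D_MH$ are smooth), Assumption~\eqref{h} (so $m\mapsto h(\bfm)$ is Fr\'echet differentiable into $W^{\kappa,2}(\Tt^d)$ with $\kappa>\frac{d}{2}+1$, and the embedding into $C(\Tt^d)$ is continuous), the smoothness of $\widetilde H$ and $\beta_{\epsi_1}$, and the fact that $W^{4p-4,2}(\Tt^d)$ is an algebra continuously embedded in $C(\Tt^d)$. Since $\widetilde X\ni(m,u)\mapsto (Du,D^2u)$ lands in $W^{4p-1,2}\times W^{4p-2,2}$ and compositions with smooth maps preserve $C^1$ regularity on these algebras, $F$ is $C^1$.

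Next, I would compute the Fr\'echet derivative
\[
L:=D_{(m,u)}F(\mu_0,(m_0,u_0)):W^{4p,2}(\Tt^d)^2\to L^2(\Tt^d)^2
\]
and show it is an isomorphism. The point is that $L$ decomposes as $L = L_{\mathrm{mon}} + L_{\mathrm{reg}} + L_{\beta}$, where $L_{\mathrm{reg}}$ contains the regularization $\epsi_1(I+\Delta^{2p})$ and $\epsi_2(I-\Delta)$ applied componentwise, $L_{\mathrm{mon}}$ is the linearization of the remaining (nonlinear) monotone part of $A^\epsi_{\mu_0}$, and $L_\beta$ comes from linearizing $\beta_{\epsi_1}(m)$. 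Monotonicity of the original operator (Assumption~\eqref{H:mon}, applied at $(m_0,u_0)+\lambda(\phi,\psi)$ for small $\lambda$, together with Remark~\ref{Rmk: oH}) implies that $L_{\mathrm{mon}}$ is positive semidefinite on $C^\infty\times C^\infty$ with respect to the $L^2\times L^2$ inner product; by density and continuity, the same holds on $W^{4p,2}\times W^{4p,2}$. Moreover $L_\beta$ is positive since $\beta_{\epsi_1}$ is nondecreasing. Combining these with the strict positivity
\[
\bigl(L_{\mathrm{reg}}(\phi,\psi),(\phi,\psi)\bigr)_{L^2\times L^2} = \epsi_1\bigl(\|\phi\|_{L^2}^2+\|\Delta^p\phi\|_{L^2}^2 + \|\psi\|_{L^2}^2+\|\Delta^p\psi\|_{L^2}^2\bigr) + \epsi_2(\cdots) \geq c_\epsi\bigl(\|\phi\|_{W^{2p,2}}^2+\|\psi\|_{W^{2p,2}}^2\bigr),
\]
one obtains coercivity of $L$ on $W^{2p,2}(\Tt^d)^2$. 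A Lax--Milgram / Fredholm-alternative argument then yields, for every $(f,g)\in L^2\times L^2$, a unique weak solution $(\phi,\psi)\in W^{2p,2}(\Tt^d)^2$ of $L(\phi,\psi)=(f,g)$; elliptic regularity applied to the dominant $\epsi_1\Delta^{2p}$-operator upgrades $(\phi,\psi)$ to $W^{4p,2}(\Tt^d)^2$, with norm bounded by $\|(f,g)\|_{L^2\times L^2}$. Hence $L$ is a topological isomorphism.

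By IFT, there exist an open neighborhood $V$ of $\mu_0$ in $[0,1]$ and a $C^1$ curve $\mu\mapsto(m^\epsi_\mu,u^\epsi_\mu)\in \widetilde X$ solving $F(\mu,(m^\epsi_\mu,u^\epsi_\mu))=0$ for all $\mu\in V$. Shrinking $V$ if necessary, we still have $\inf_{\Tt^d} m^\epsi_\mu > 0$, so $(m^\epsi_\mu,u^\epsi_\mu)\in D(A^\epsi_\mu)$. Finally, the bootstrap argument of Lemma~\ref{Lem:Wkinfty} (which only uses that $(m^\epsi_\mu,u^\epsi_\mu)\in D(A^\epsi_\mu)$ solves \eqref{Pe} together with Assumption~\eqref{H:smooth}) promotes each such solution to $C^\infty(\Tt^d)\times C^\infty(\Tt^d)$. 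Therefore $V\subset\Lambda^\epsi$, proving that $\Lambda^\epsi$ is open in $[0,1]$.

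The main obstacle in this plan is Step 3 (invertibility of $L$). The delicate point is to combine the mere semidefiniteness inherited from Assumption~\eqref{H:mon}, which does not by itself yield coercivity in any Sobolev norm, with the strict positivity produced by the $\epsi_1$- and $\epsi_2$-regularizations, and then to pass from coercivity in $W^{2p,2}\times W^{2p,2}$ to surjectivity onto $L^2\times L^2$ with range in $W^{4p,2}\times W^{4p,2}$ by elliptic regularity for the polyharmonic part $\Delta^{2p}$.
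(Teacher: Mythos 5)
Your proposal is correct and follows essentially the same route as the paper: both fix $(m_0,u_0)$, invoke the implicit function theorem, and show that the Fr\'echet derivative of $A^\epsi_{\mu_0}$ at $(m_0,u_0)$ is an isomorphism $W^{4p,2}\times W^{4p,2}\to L^2\times L^2$ by splitting the associated bilinear form into a positive-semidefinite part coming from Assumption~\eqref{H:mon} (via the second-order difference quotient), a nonnegative $\beta_{\epsi_1}'$ term, and a coercive $\epsi_1,\epsi_2$ regularization, then applying Lax--Milgram on $W^{2p,2}\times W^{2p,2}$ and upgrading to $W^{4p,2}$ by elliptic regularity for $\Delta^{2p}$. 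The extra care you take in verifying that $F$ is $C^1$, that $\inf_{\Tt^d} m^\epsi_\mu>0$ persists on a shrunk neighborhood, and that the IFT-produced $W^{4p,2}$ solutions are promoted to $C^\infty$ via the bootstrap of Lemma~\ref{Lem:Wkinfty} is consistent with, and slightly more explicit than, what the paper records.
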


\begin{proof}
Fix $\mu_0\in \Lambda^\epsi$. We want to show that there exists a neighborhood of $\mu_0$ in $[0,1]$ contained in $\Lambda^\epsi$. That will be a consequence of the implicit function theorem in Banach spaces (see, for example, \cite{Die}).

Fix a solution, $(m_0, u_0) \in
 D(A^\epsi_\mu)  \cap{C^\infty}(\Tt^d)
\times
 C^\infty(\Tt^d)$,   to \eqref{Pe}
with $\mu=\mu_0$. 

{ In this proof, to simplify
        the notation, we omit the
        dependence on \(\epsi\) and \(\mu_0\). }

To use the implicit
function theorem,
we have to prove that the Fr\'echet derivative, $\Ll: W^{4p,2}(\Tt^d) \times  W^{4p,2}(\Tt^d) \to L^2(\Tt^d) \times L^2(\Tt^d)$, of 
\begin{eqnarray*}
\begin{aligned}
\begin{bmatrix}m
\\
u \\
\end{bmatrix} \mapsto A^\epsi_{\mu_0} \begin{bmatrix}m
\\
u \\
\end{bmatrix}
\end{aligned}
\end{eqnarray*}
at $(m_0, u_0)$ is an isomorphism from  $W^{4p,2}(\Tt^d) \times  W^{4p,2}(\Tt^d)$ to $L^2(\Tt^d) \times L^2(\Tt^d)$.

Given $(\eta,v)\in W^{4p,2}(\Tt^d) \times
 W^{4p,2}(\Tt^d)$, we have that
\begin{eqnarray*}
\begin{aligned}
\Ll \begin{bmatrix}\eta
\\
v \\
\end{bmatrix} = 
\begin{bmatrix}
-v - \ell_1(\eta, v)+\ell_2(\eta ) \\
\eta- \div(\eta D_{p} H_{\mu_0} + m_0  \ell_3(\eta, v)) + (\eta D_{M_{ij}} H_{\mu_0}+m_0 \ell_4(\eta, v))_{x_i x_j} +\ell_5( v) \\
\end{bmatrix},
\end{aligned}
\end{eqnarray*}
where, recalling that \(\mathfrak{H}_{m_0}  \) is the Fr\'echet derivative of 
\(m\in  W^{4p,2}(\Tt^d) \mapsto h(\bfm)
\in  W^{4p,2}(\Tt^d)\) at \(m_0\) (see \eqref{hFD}), 
{\setlength\arraycolsep{0.5pt}%
\begin{eqnarray*}
&&\ell_1(\eta, v):= D_p H_{\mu_0} \cdot Dv + D_M H_{\mu_0} : D^2v + D_m H_{\mu_0} \eta + D_\theta
H_{\mu_0} \mathfrak{H}_{m_0}(\eta),\\
&& \ell_2(\eta ):= \epsi_1 (\eta + \Delta^{2p}\eta) + \epsi_2 (\eta - \Delta \eta)+ \beta'_\epsi (m_0)\eta, \\
&&\ell_3(\eta, v):= D_{pp}^2 H_{\mu_0}  Dv +
D_{M_{ij}p}^2 H_{\mu_0} D^2_{ij}v + D_{mp}^2 H_{\mu_0} \eta + D_{\theta p}^2
H_{\mu_0} \mathfrak{H}_{m_0}(\eta),\\
&& \ell_4(\eta, v):= D^2_{pM_{ij}} H_{\mu_0} \cdot Dv +
D_{MM_{ij}}^2 H_{\mu_0} : D^2v + D_{m M_{ij}}^2
H_{\mu_0} \eta + D_{\theta M_{ij}}
H_{\mu_0} \mathfrak{H}_{m_0}(\eta), \\
&& \ell_5( v):= \epsi_1 (v + \Delta^{2p} v)
+ \epsi_2 (v - \Delta v),
\end{eqnarray*}}%
and the argument of  $H_{\mu_0}$ and all its partial derivatives is $(x, D u_0,  D^2 u_0, m_0, 
 h(\bfm_0))$.

Let
{\setlength\arraycolsep{0.5pt}%
\begin{eqnarray*}
\begin{aligned}
K_0:=\{(x,Du_0(x), D^2u_0(x), m_0(x), h(\bfm_0)(x))\!:\,
x\in \Tt^d \}. 
\end{aligned}
\end{eqnarray*}}%
Arguing as in Lemma~\ref{Lem:Wkinfty}, for all $\kappa \in \NN_0$, we have
{\setlength\arraycolsep{0.5pt}
\begin{eqnarray}\label{Hmu0bdd}
\begin{aligned}
\Vert H_\mu\Vert_{W^{\kappa,\infty}(\overline
K_0)} \leq \Vert H \Vert_{W^{\kappa,\infty}(\overline
K_0)} + 
\Vert \widetilde H\Vert_{W^{\kappa,\infty}(\overline
K_0)} \leq C
\end{aligned}
\end{eqnarray}}%
for some positive
constant, $C$, that depends only on the problem
data and on \(\epsi\).

Note also that $\Ll(\eta,v) \in L^2(\Tt^d) \times L^2(\Tt^d)$ because $W^{4p-4,2}(\Tt^d)$ is an algebra,    $\inf_{\Tt^d} m_0>0$, and in view of \eqref{on h(m)} and \eqref{Hmu0bdd}. 
We will use the Lax-Milgram theorem
to prove that \(\mathcal{L}\) defines an isomorphism.

Fix $\bar z = (\bar z_1,\bar z_2)  \in L^2(\Tt^d)
\times L^2(\Tt^d)$.  Next, we
show that there exists a unique $\bar w=(\bar \eta,
\bar v) \in W^{4p,2}(\Tt^d) \times  W^{4p,2}(\Tt^d)$ such that $\Ll \bar w = \bar z$. 

Define a map $L: W^{2p,2}(\Tt^d) \times  W^{2p,2}(\Tt^d)
 \to \RR$ by  $L[w] := ( \bar z , w)_{L^2(\Tt^d)\times
L^2(\Tt^d)}$ for $w=(\eta, v) \in W^{2p,2}(\Tt^d)
\times  W^{2p,2}(\Tt^d)$. Clearly, $L$ is  linear and bounded  in $W^{2p,2}(\Tt^d)
\times  W^{2p,2}(\Tt^d)$.

Now, let  $B: W^{2p,2}(\Tt^d) \times  W^{2p,2}(\Tt^d)  \to \RR$ be the bilinear form defined for $w_1=(\eta_1, v_1)$, $w_2=(\eta_2, v_2) \in W^{2p,2}(\Tt^d) \times  W^{2p,2}(\Tt^d)$ by
\begin{eqnarray*}
\begin{aligned}
B[w_1,w_2] := \int_{\Tt^d} \big[ &-v_1 \eta_2  + \eta_1 v_2 - \ell_1(\eta_1, v_1)\eta_2  +m_0D_{p} H_{\mu_0}
\cdot Dv_2+\eta_1 \ell_3(\eta_1, v_1)\cdot Dv_2 \\
&  +\eta _1D_M H_{\mu_0}: D^2v_2+m_0
\ell_4(\eta_1, v_1) D^2_{ij} v_2  \\
&  +\epsi_1 (\eta_1 \eta_2 + \Delta^p \eta_1 \Delta^p \eta_2 + v_1 v_2 + \Delta^p v_1 \Delta^p v_2)\big) + \beta_{\epsi_1}'(m_0)\eta_1
\eta_2  \\
&  +\epsi_2 (\eta_1 \eta_2 + \nabla \eta_2 \nabla \eta_2 + v_1 v_2 + \nabla v_1 \nabla
v_2)\big] \,\dx.
\end{aligned}
\end{eqnarray*}
We start by observing that if $w_1=(\eta_1, v_1) \in W^{4p,2}(\Tt^d)
\times  W^{4p,2}(\Tt^d)$ and $w_2=(\eta_2, v_2) \in W^{2p,2}(\Tt^d)
\times  W^{2p,2}(\Tt^d)$, then
\begin{eqnarray}\label{BandL}
\begin{aligned}
B[w_1,w_2] = (\Ll w_1, w_2)_{L^2(\Tt^d)\times L^2(\Tt^d)}.
\end{aligned}
\end{eqnarray}
Moreover, for $w=(\eta,v)
\in W^{2p,2}(\Tt^d)
\times  W^{2p,2}(\Tt^d)$,
\begin{eqnarray*}
\begin{aligned}
B[w,w] &= B_1[w,w]+ \int_{\Tt^d}   \epsi_1(\eta^2 + (\Delta^{p} \eta)^2 + v^2 + (\Delta^{p} v)^2)
+ \beta_{\epsi_1}'(m_0)\eta^2\,\dx \\
&\qquad + \int_{\Tt^d}  \epsi_1(\eta^2
+ |\nabla\eta|^2 + v^2
+ |\nabla v|^2)\, \dx,
\end{aligned}
\end{eqnarray*}
where
{\setlength\arraycolsep{0.5pt}
\begin{eqnarray*}
 B_1[w,w]:= &&\int_{\Tt^d} \big[ - \ell_1(\eta, v)\eta 
+m_0D_{p} H_{\mu_0}
\cdot Dv+\eta \ell_3(\eta, v)\cdot
Dv   +\eta D_M H_{\mu_0}: D^2 v\\ 
&&\qquad + m_0 \ell_4(\eta, v) D^2_{ij} v\big]\, \dx.
\end{eqnarray*}}%
We claim that  
{\setlength\arraycolsep{0.5pt}
\begin{eqnarray}
&& B_1[w,w] \geq 0. \label{B1pos} 
\end{eqnarray}}%
Assume that \eqref{B1pos} holds. Then, using integration
by parts, Gagliardo-Nirenberg's interpolation inequalities,
and the fact that $m_0>0$ and $\beta_{\epsi_1}'(m_0)\geq
0$ in $\Tt^d$, it follows that there exists
a positive constant, $C=C(\epsi, p, d, \Tt^d)$,
such that
{\setlength\arraycolsep{0.5pt}
\begin{eqnarray*}
&& B[w,w] \geq C \Vert w\Vert^2_{W^{2p,2}(\Tt^d)
\times  W^{2p,2}(\Tt^d)}.
\end{eqnarray*}}%
Hence, $B[\cdot,\cdot]$ is coercive in $W^{2p,2}(\Tt^d) \times  W^{2p,2}(\Tt^d)$. 

Next, we prove that \eqref{B1pos} is a consequence of the monotonicity
of \(H\) and \(\widetilde
H\) encoded in Assumption \eqref{H:mon}. In fact, let \(\widetilde A\) and \(A_{\mu_0}\) be the operators associated with \(\widetilde H\) and \(H_{\mu_0}\), respectively (see \eqref{A}). Then, \(A_{\mu_0} = (1-\mu_0) A + \mu_0 \widetilde A\). Since both  \( A\) and \(\widetilde A\)
satisfy Assumption \eqref{H:mon}, the same holds for \(A_{\mu_0}\). Consequently,
{\setlength\arraycolsep{0.5pt}%
\begin{eqnarray*}
\begin{aligned}
0 \leq \lim_{\delta \to 0^+} \frac{1}{\delta^2} \bigg( A^\epsi_{\mu_0} \begin{bmatrix} m_0 + \delta \eta
\\
u_0 + \delta v \\
\end{bmatrix}
-  A^\epsi_{\mu_0} \begin{bmatrix} m_0

\\
u_0  \\
\end{bmatrix}, \begin{bmatrix} \delta \eta
\\
 \delta v \\
\end{bmatrix} \bigg)_{L^2(\Tt^d)\times
L^2(\Tt^d)} = B_1[w,w],
\end{aligned}
\end{eqnarray*}}%
which proves \eqref{B1pos}.

We also have that $B[\cdot,\cdot]$   is bounded in $W^{2p,2}(\Tt^d)
\times  W^{2p,2}(\Tt^d)$; more precisely,
\begin{eqnarray*}
\begin{aligned}
|B[w_1, w_2]| \leq C \Vert w_1\Vert_{W^{2p,2}(\Tt^d)
\times  W^{2p,2}(\Tt^d)} \Vert w_2 \Vert_{W^{2p,2}(\Tt^d)
\times  W^{2p,2}(\Tt^d)}
\end{aligned}
\end{eqnarray*}
for some positive
constant, $C$, that depends only on the problem
data and on \(\epsi\)
 in view of Remark~\ref{Rmk:bds m}, Lemma~\ref{Lem:Wkinfty}, \eqref{on h(m)}, and \eqref{Hmu0bdd}.

By the Lax-Milgram theorem, there exists a unique $\bar w = (\bar \eta, \bar v)\in W^{2p,2}(\Tt^d) \times  W^{2p,2}(\Tt^d)$ such that
\begin{eqnarray}\label{LaxMil}
\begin{aligned}
B[\bar w, w] = L[w] =( \bar z , w)_{L^2(\Tt^d)\times
L^2(\Tt^d)}  
\end{aligned}
\end{eqnarray}
for all $w \in W^{2p,2}(\Tt^d)
\times  W^{2p,2}(\Tt^d)$. From \eqref{BandL}, the proof is complete once we show that  $\bar w \in W^{4p,2}(\Tt^d)
\times  W^{4p,2}(\Tt^d)$. 
   
Equality \eqref{LaxMil} yields that $\bar \theta \in W^{2p,2}(\Tt^d)$ and $\bar
v \in W^{2p,2}(\Tt^d)$ are weak solutions of 
\begin{eqnarray*}
\begin{aligned}
\epsi (\bar \eta + \Delta^{2p} \bar \eta ) = \bar v +\ell_1(\bar \eta,\bar v)-\ell_2(\bar\eta ) + \bar z_1  =: g_1[\bar\eta, \bar v]
\end{aligned}
\end{eqnarray*}
and
\begin{eqnarray*}
\begin{aligned}
\epsi (\bar v + \Delta^{2p} \bar v
) =& - \bar\eta + \div(m_0 D_{p} H_{\mu_0} +\bar \eta \ell_3(\bar\eta,\bar
v)) \\
&-(\bar\eta D_{M_{ij}} H_{\mu_0}+m_0 \ell_4(\bar \eta, \bar
v))_{x_i x_j} -\ell_5(\bar v) + \bar z_2  =: g_2[\bar\eta,
\bar v],
\end{aligned}
\end{eqnarray*}
respectively, where the argument of  $H_{\mu_0}$ and all its
partial derivatives is $(x, D u_0,  D^2
u_0, m_0, 
 h(\bfm_0))$. Arguing as above, we have that $g_1[\bar\eta,
\bar v]$, $g_2[\bar\eta,
\bar v] \in L^2(\Tt^d)$; hence, by  the elliptic regularity theory, it follows that $\bar\eta \in W^{4p,2}(\Tt^d)$ and $\bar
v \in W^{4p,2}(\Tt^d)$.
\end{proof}

As a result of the previous lemmas, we obtain \eqref{Lambda=01}:

\begin{corollary}\label{Cor:L=[0,1]}
Assume that assumptions \eqref{h}, \eqref{g}, and \eqref{bddH}--\eqref{H:mon} are satisfied and let $\Lambda^\epsi$ be the set
defined in \eqref{Lambda}. Then, $\Lambda^\epsi
=[0,1]$.
\end{corollary}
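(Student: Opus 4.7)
The plan is to deduce the corollary immediately from the three preceding lemmas by a standard connectedness argument (the method of continuity). Under Assumptions \eqref{h}, \eqref{g}, and \eqref{bddH}--\eqref{H:mon}, the previous lemmas give the three ingredients needed: first, $\Lambda^\epsi$ is a nonempty subset of $[0,1]$, since the explicit construction at $\mu = 1$ (using the scalar equation $f(\bar c) = 0$) exhibits a smooth solution $(m^\epsi_1, u^\epsi_1) \in D(A^\epsi_1) \cap C^\infty(\Tt^d) \times C^\infty(\Tt^d)$; second, $\Lambda^\epsi$ is relatively closed in $[0,1]$, by the compactness-plus-passage-to-the-limit lemma built on the uniform $W^{k,\infty}$ estimates of Lemma~\ref{Lem:Wkinfty} and the strict positivity provided by Lemma~\ref{Lem:m>0 mu}; third, $\Lambda^\epsi$ is relatively open in $[0,1]$, by the implicit function theorem argument whose nontrivial step is the invertibility of the linearized operator $\Ll$, which was in turn reduced to the Lax--Milgram theorem using the monotonicity bound $B_1[w,w] \geq 0$.

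The second step is therefore to combine these three facts. The interval $[0,1]$ is connected, and $\Lambda^\epsi$ is, by the three lemmas, simultaneously nonempty, relatively closed, and relatively open in $[0,1]$. Since the only nonempty clopen subset of a connected space is the space itself, I would conclude directly that $\Lambda^\epsi = [0,1]$. No further computation is required, and there is no real obstacle: the entire weight of the argument has already been discharged by the three preceding lemmas. In particular, specializing the conclusion to $\mu = 0$ yields, via the definition of $\Lambda^\epsi$ in \eqref{Lambda} and the remark that \eqref{Pe} with $\mu = 0$ reduces to \eqref{RP}, the existence of a classical solution $(m^\epsi, u^\epsi) \in C^\infty(\Tt^d) \times C^\infty(\Tt^d)$ to \eqref{RP} with $\inf_{\Tt^d} m^\epsi > 0$, which is precisely the content of Proposition~\ref{RPSol}.
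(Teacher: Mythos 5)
Your proposal is correct and follows exactly the route the paper takes: the three preceding lemmas establish that $\Lambda^\epsi$ is nonempty, relatively closed, and relatively open in the connected set $[0,1]$, so the standard clopen argument forces $\Lambda^\epsi = [0,1]$. The closing remark about specializing to $\mu = 0$ to recover Proposition~\ref{RPSol} also matches the paper's own short proof of that proposition.
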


The proof of Proposition \ref{RPSol} is now a simple matter. 

\begin{proof}[Proof of Proposition \ref{RPSol}]
By Corollary \ref{Cor:L=[0,1]}, $0\in \Lambda^\epsi$. Therefore, the 
claim in the statement holds. 
\end{proof}

\section{Existence of weak solutions}
\label{Sect:pmthm}

Our next goal is to establish the existence of weak solutions to \eqref{P}.
Because the parameter $\mu$ does not play any role in this section, we set
$\mu=0$ and $A^\epsi=A^\epsi_0$, and we recall that
\eqref{RP} corresponds to \eqref{Pe} with \(\mu=0\).
We proceed in three steps. First, we prove that the operator $A^\epsi$
is monotone. Next, we prove estimates for solutions of \eqref{RP} 
that are uniform in $\epsi$. Finally, combining the monotonicity and these estimates,  we use Minty's
method to obtain a weak solution to \eqref{P}.

\begin{lemma}\label{Lem:Ae mon}
Suppose that Assumption~\eqref{H:mon} holds.  Then, the operator $A^\epsi$ given by 
\eqref{domAe}--\eqref{Ae} (with $\mu=0$) is monotone in $L^2(\Tt^d)\times L^2(\Tt^d)$. More precisely, for all 
$(m_1, u_1)$, $(m_2, u_2)\in D(A^\epsi) \cap C^\infty(\Tt^d) \times C^\infty(\Tt^d)$, we have that 
{\setlength\arraycolsep{0.5pt}%
\begin{eqnarray*}
\begin{aligned}
\bigg(A^\epsi
\begin{bmatrix}m_1 \\
u_1 \\
\end{bmatrix} 
- 
A^\epsi
\begin{bmatrix}m_2 \\
u_2 \\
\end{bmatrix}, \begin{bmatrix}m_1 \\
u_1 \\
\end{bmatrix} 
-  
\begin{bmatrix}m_2 \\
u_2 \\
\end{bmatrix}  
\bigg)_{L^2(\Tt^d)\times L^2(\Tt^d)}
 \geq 0.
\end{aligned}
\end{eqnarray*}}%
\end{lemma}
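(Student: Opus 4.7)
The plan is to decompose $A^\epsi = A + R^\epsi$, where $A$ is the operator from \eqref{A} (built from the Hamiltonian $H$), and
\begin{equation*}
R^\epsi \begin{bmatrix} m \\ u \end{bmatrix}
:= \begin{bmatrix} \epsi_1 (m + \Delta^{2p} m) + \epsi_2(m - \Delta m) + \beta_{\epsi_1}(m) \\ \epsi_1 (u + \Delta^{2p} u) + \epsi_2 (u - \Delta u) \end{bmatrix}
\end{equation*}
collects the regularizing perturbations. Since $(m_i,u_i)\in D(A^\epsi)\cap C^\infty\times C^\infty$ implies $\inf_{\Tt^d} m_i >0$, the pair $(m_i,u_i)$ belongs to $C^\infty(\Tt^d;\EE)\times C^\infty(\Tt^d)$, and Assumption \eqref{H:mon} applies directly to give monotonicity of $A$ with respect to the $L^2\times L^2$ inner product. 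It therefore suffices to prove that $R^\epsi$ is monotone as well, and then add the two inequalities.

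For the monotonicity of $R^\epsi$, I would treat each term separately, using integration by parts on $\Tt^d$ (no boundary terms by periodicity). Writing $\tilde m:=m_1-m_2$ and $\tilde u:=u_1-u_2$, the $\epsi_1$-contributions give
\begin{equation*}
\epsi_1 \int_{\Tt^d} \big[\tilde m^2 + \tilde m\, \Delta^{2p}\tilde m + \tilde u^2 + \tilde u\, \Delta^{2p}\tilde u\big]\,\dx
= \epsi_1 \int_{\Tt^d} \big[\tilde m^2 + (\Delta^p \tilde m)^2 + \tilde u^2 + (\Delta^p \tilde u)^2\big]\,\dx \geq 0,
\end{equation*}
and similarly the $\epsi_2$-contributions yield the nonnegative quantity $\epsi_2 \int_{\Tt^d} [\tilde m^2 + |D\tilde m|^2 + \tilde u^2 + |D\tilde u|^2]\,\dx$. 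For the penalization term, since $\beta_{\epsi_1}$ is non-decreasing on $(0,\infty)$, the scalar inequality $(\beta_{\epsi_1}(s)-\beta_{\epsi_1}(t))(s-t)\geq 0$ integrates to $\int_{\Tt^d}(\beta_{\epsi_1}(m_1)-\beta_{\epsi_1}(m_2))\tilde m\,\dx\geq 0$.

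Combining the three nonnegative contributions gives monotonicity of $R^\epsi$, and adding the monotonicity of $A$ furnished by Assumption \eqref{H:mon} completes the proof. There is no real obstacle here: the only point that requires a moment of care is that $(m_i,u_i)\in D(A^\epsi)$ entails $m_i>0$, which is exactly what is needed to legitimately invoke Assumption \eqref{H:mon} on the smooth test pair $(m_i,u_i)\in C^\infty(\Tt^d;\EE)\times C^\infty(\Tt^d)$.
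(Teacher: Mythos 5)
Your proposal is correct and follows essentially the same route as the paper: decompose $A^\epsi$ into $A$ plus the regularizing perturbation, invoke Assumption~\eqref{H:mon} for the $A$-part (legitimate since $D(A^\epsi)$ forces $m_i>0$, so $(m_i,u_i)\in C^\infty(\Tt^d;\EE)\times C^\infty(\Tt^d)$), and then integrate by parts to show that the $\epsi_1$-, $\epsi_2$-, and $\beta_{\epsi_1}$-contributions are each nonnegative. The paper carries this out in a single expansion rather than introducing a named operator $R^\epsi$, but the ingredients and the order of reasoning are identical.
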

\begin{proof}
        Let $(m_1, u_1)$, $(m_2, u_2)\in D(A^\epsi)
        \cap C^\infty(\Tt^d) \times C^\infty(\Tt^d)$. Using the definition of $A^\epsi$ and \(A\) 
(see \eqref{Ae} and \eqref{A}), we obtain
        {\setlength\arraycolsep{0.5pt}%
                \begin{eqnarray*}
                        \begin{aligned}
                                &\bigg(A^\epsi
                                \begin{bmatrix}m_1 \\
                                        u_1 \\
                                \end{bmatrix} 
                                - 
                                A^\epsi
                                \begin{bmatrix}m_2 \\
                                        u_2 \\
                                \end{bmatrix}, \begin{bmatrix}m_1 \\
                                u_1 \\
                        \end{bmatrix} 
                        -  
                        \begin{bmatrix}m_2 \\
                                u_2 \\
                        \end{bmatrix}  
                        \bigg)_{L^2(\Tt^d)\times L^2(\Tt^d)} \\
                        &\qquad = \bigg(A
                        \begin{bmatrix}m_1 \\
                                u_1 \\
                        \end{bmatrix} 
                        - 
                        A
                        \begin{bmatrix}m_2 \\
                                u_2 \\
                        \end{bmatrix}, \begin{bmatrix}m_1 \\
                        u_1 \\
                \end{bmatrix} 
                -  
                \begin{bmatrix}m_2 \\
                        u_2 \\
                \end{bmatrix}  
                \bigg)_{L^2(\Tt^d)\times L^2(\Tt^d)}\\
        &\qquad \quad +\int_{\Tt^d} (\beta_{\epsi_1} (m_1)
        - \beta_{\epsi_1} (m_2)) (m_1 - m_2)\,\dx\\
        &\qquad\quad + \epsi_1\int_{\Tt^d} \left[( m_1 - m_2)^2 + (\Delta^p
        m_1 - \Delta^p m_2)^2  +(u_1 - u_2)^2 + (\Delta^p u_1 - \Delta^p u_2)^2\right] \,\dx \\
        &\qquad\quad + \epsi_2\int_{\Tt^d} \left[( m_1 -
        m_2)^2 + (D
        m_1 - D m_2)^2  +(u_1 - u_2)^2 + (D
        u_1 - D u_2)^2\right] \,\dx.
\end{aligned}
\end{eqnarray*}}%
The conclusion follows because  $\beta_{\epsi_1}(\cdot)$ is a non-decreasing function and   \(A\) satisfies \eqref{H:mon}.
\end{proof}

Next, we give bounds for solutions of \eqref{RP}. 
\begin{pro}
\label{punifbd}
Suppose that Assumptions \eqref{g} and \eqref{bddH} hold.
Then, there exists a positive constant, $C$,
such that, for any $\epsi\in (0,1)^2$
and any solution  $(u^\epsi, m^\epsi)\in D(A^\epsi)
$  to 
\eqref{RP}, we have
\begin{equation}
\label{uW1g1}
\int_{\Tt^d}
|Du^\epsi|^\gamma \,\dx\leq C,
\end{equation}
\begin{equation}
\label{uW1g2}
\left|\int_{\Tt^d} u^\epsi\,\dx\right|\leq C,
\end{equation}
\begin{equation}
\label{Est3}
\int_{\Tt^d}|\beta_{\epsi_1}(m^\epsi)|\,\dx\leq C,
\end{equation}
\begin{equation}
\label{Est4}
\intt m^\epsi g_1(x, m^\epsi, h(\bfm^\epsi))\,\dx\leq C,
\end{equation}
and 
\begin{equation}
\label{bddsg1}
\int_{\Tt^d} - g_1(x,m^\epsi, h(\bfm^\epsi))\,\dx
\leq
C.
\end{equation}
\end{pro}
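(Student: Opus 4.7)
The strategy is to combine Lemma~\ref{Lem:ape} (specialized to $\mu=0$) with Remark~\ref{Rmk:apemu=0}, both of whose constants depend only on the problem data and therefore provide uniform control in $\epsi$. With $\mu=0$, the non-negative integrals appearing in \eqref{ape1} yield, uniformly in $\epsi\in(0,1)^2$,
\[
\int_{\Tt^d}|Du^\epsi|^\gamma (m^\epsi)^{1-\tau}\,\dx + \int_{\Tt^d}|Du^\epsi|^\gamma (m^\epsi)^{-\tau}\,\dx + \int_{\Tt^d}-\beta_{\epsi_1}(m^\epsi)\,\dx \leq C.
\]
Since $\beta_{\epsi_1}\leq 0$, this gives \eqref{Est3} at once, and Remark~\ref{Rmk:apemu=0} delivers \eqref{Est4} and \eqref{bddsg1} directly. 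Only \eqref{uW1g1} and \eqref{uW1g2} remain to be established.

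For \eqref{uW1g1}, I would split according to $\tau$. If $\tau=0$, then $(m^\epsi)^{-\tau}\equiv 1$ and the bound is immediate. If $\tau\in(0,1)$, I would exploit the pointwise identity
\[
|Du^\epsi|^\gamma = \bigl(|Du^\epsi|^\gamma (m^\epsi)^{-\tau}\bigr)^{1-\tau}\bigl(|Du^\epsi|^\gamma (m^\epsi)^{1-\tau}\bigr)^{\tau},
\]
whose validity is checked by noting that the $m$-exponents cancel ($-\tau(1-\tau)+(1-\tau)\tau=0$), and then apply H\"older's inequality with conjugate exponents $\tfrac{1}{1-\tau}$ and $\tfrac{1}{\tau}$ to conclude
\[
\int_{\Tt^d}|Du^\epsi|^\gamma\,\dx \leq \Bigl(\int_{\Tt^d}|Du^\epsi|^\gamma (m^\epsi)^{-\tau}\,\dx\Bigr)^{1-\tau}\Bigl(\int_{\Tt^d}|Du^\epsi|^\gamma (m^\epsi)^{1-\tau}\,\dx\Bigr)^{\tau}\leq C.
\]

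For \eqref{uW1g2}, I would integrate the first equation of \eqref{RP} over $\Tt^d$. The bi-Laplacian and Laplacian terms integrate to zero on the torus, which yields
\[
\int_{\Tt^d} u^\epsi\,\dx = -\int_{\Tt^d} H\,\dx + (\epsi_1+\epsi_2)\int_{\Tt^d} m^\epsi\,\dx + \int_{\Tt^d}\beta_{\epsi_1}(m^\epsi)\,\dx.
\]
Assumption~\eqref{H:int b} bounds $|\int_{\Tt^d} H\,\dx|$ by a combination of $\int_{\Tt^d}|Du^\epsi|^\gamma(m^\epsi)^{-\tau}\,\dx$ (already controlled) and $\int_{\Tt^d} g\,\dx = \int_{\Tt^d} g_1\,\dx + \int_{\Tt^d} g_2\,\dx$. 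Applying \eqref{G:g ub} with any fixed $\delta>0$ to \eqref{Est4} yields uniform bounds on both $\int_{\Tt^d} m^\epsi\,\dx$ and $\int_{\Tt^d} g_1\,\dx$, while $\int_{\Tt^d} g_2\,\dx$ is controlled by \eqref{G:g+} and the bound on $\int_{\Tt^d} m^\epsi\,\dx$. Combining with \eqref{Est3} and the condition $\epsi_1,\epsi_2<1$ completes the proof.

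The genuinely delicate step is the H\"older interpolation underlying \eqref{uW1g1}, since it crucially uses the simultaneous presence of both the $(m^\epsi)^{-\tau}$ and $(m^\epsi)^{1-\tau}$ weighted gradient bounds produced by Lemma~\ref{Lem:ape} (the former from Assumption~\eqref{H:ae lb}, the latter from testing the Hamilton--Jacobi equation against $m^\epsi$). Everything else is careful bookkeeping of already-established estimates.
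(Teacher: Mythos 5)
Your proof is correct and follows essentially the same structure as the paper's: invoke Lemma~\ref{Lem:ape} with $\mu=0$ and Remark~\ref{Rmk:apemu=0} to get \eqref{Est3}, \eqref{Est4}, and \eqref{bddsg1}, then integrate the first equation of \eqref{RP} over the torus and bound $\int H$, $\int \beta_{\epsi_1}$, and $(\epsi_1+\epsi_2)\int m^\epsi$ individually to get \eqref{uW1g2}. The single place where you diverge from the paper is the derivation of \eqref{uW1g1}: you use the pointwise factorization $|Du^\epsi|^\gamma = \bigl(|Du^\epsi|^\gamma(m^\epsi)^{-\tau}\bigr)^{1-\tau}\bigl(|Du^\epsi|^\gamma(m^\epsi)^{1-\tau}\bigr)^{\tau}$ followed by H\"older with exponents $\tfrac{1}{1-\tau}$, $\tfrac{1}{\tau}$, which forces you to treat $\tau=0$ separately. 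The paper instead splits the domain into $\{m^\epsi\geq 1\}$ and $\{m^\epsi<1\}$ and uses, on each piece, the pointwise inequality $|Du^\epsi|^\gamma\leq |Du^\epsi|^\gamma(m^\epsi)^{1-\tau}$ or $|Du^\epsi|^\gamma\leq |Du^\epsi|^\gamma(m^\epsi)^{-\tau}$, respectively; this avoids H\"older and handles all $\tau\in[0,1)$ uniformly. Both arguments rest on the same two weighted bounds from Lemma~\ref{Lem:ape}, so this is a cosmetic difference, not a different route. Your identification of the interpolation between the $(m^\epsi)^{-\tau}$ and $(m^\epsi)^{1-\tau}$ weights as the crux is accurate.
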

\begin{proof}To simplify the notation,  \(C\) represents a positive constant depending only on the problem data and whose value may change from one expression to another.

Estimates \eqref{Est4} and \eqref{bddsg1}
follow from Remark~\ref{Rmk:apemu=0}.

Next, we prove \eqref{uW1g1} and \eqref{Est3}. Because all integral
terms in (\ref{ape1}) are nonnegative, $0\leq \tau<1$, $m^\epsi>0$, and \(\beta_{\epsi_1}(\cdot)
\leq 0 \), it follows
from Lemma~\ref{Lem:ape} with \(\mu=0\) that  
\begin{equation*}
C \geq \intt -\beta_{\epsi_1} (m^\epsi)\,\dx 
=\int_{\Tt^d}|\beta_{\epsi_1}(m^\epsi)|\,\dx
\end{equation*}
and
{\setlength\arraycolsep{0.5pt}
        \begin{eqnarray*}
        C &&\geq 
        \int_{\Tt^d}
        |Du^\epsi|^\gamma (m^\epsi)^{1-\tau}\dx
        +  \int_{\Tt^d}
        |Du^\epsi|^\gamma (m^\epsi )^{-\tau}
        \, \dx  \nonumber \\
        && \geq 
        \int_{\{x\in \Tt^d\!: \, m^\epsi(x) \geq
                1\}}
        |Du^\epsi|^\gamma \dx
        +  \int_{\{x\in \Tt^d\!: \, m^\epsi(x)<
                1\}}
        |Du^\epsi|^\gamma 
        \, \dx  \nonumber \\
        &&=
        \int_{\Tt^d}
        |Du^\epsi|^\gamma \dx. 
        \end{eqnarray*}}%
Therefore, \eqref{uW1g1} and \eqref{Est3} hold.
 
Finally, we show \eqref{uW1g2}. 
Integrating the first equation in \eqref{RP}
over \(\Tt^d\), we obtain
{\setlength\arraycolsep{0.5pt}
        \begin{eqnarray}
        \int_{\Tt^d} u^\epsi \, \dx = &&-\int_{\Tt^d} H (x,Du^\epsi,D^2u^\epsi,m^\epsi,h(\bfm^\epsi)) \, \dx +  \int_{\Tt^d} \beta_{\epsi_1}(m^\epsi)\, \dx \nonumber\\
        &&+ (\epsi_1 + \epsi_2)\int_{\Tt^d}m^\epsi \, \dx. \label{meanu0}
        \end{eqnarray}}%
By  \eqref{ape1} with \(\mu=0\) and \eqref{Est3},
we have that
{\setlength\arraycolsep{0.5pt}
        \begin{eqnarray}
        &&
        \int_{\Tt^d} \big|\beta_{\epsi_1}(m^\epsi)\big| \,
        \dx + 
        (\epsi_1 + \epsi_2)\int_{\Tt^d}\big| m^\epsi  \big| \,\dx \nonumber\\
        &&\qquad\leq C + 
        2+(\epsi_1 + \epsi_2)\int_{\Tt^d} (m^\epsi)^2
        \, \dx  \leq C. \label{meanu1}
        \end{eqnarray}}%
Next, we estimate the upper and lower bounds of
\(\intt H\,\dx\) in \eqref{H:int b}. According to \eqref{G:sumg}, \eqref{bddsg1}, and \eqref{ape1} with \(\mu=0\), we have that
{\setlength\arraycolsep{0.5pt}
        \begin{eqnarray}
        \begin{aligned}
         &-C_4 \intt g(x,m^\epsi,
        h(\bfm^\epsi)) \,\dx +C_1   \bigg( 1+\int_{\Tt^d}   |Du^\epsi|^\gamma (m^\epsi)^{-\tau} \,\dx
        \bigg) 
        \leq C.\qquad \label{meanu2}
        \end{aligned}
        \end{eqnarray}}%
Similarly,
from \eqref{G:sumg}, \eqref{G:g+}, \eqref{G:g ub} with \(\delta = 1\), and \eqref{Est4}, we
get 
{\setlength\arraycolsep{0.5pt}
        \begin{eqnarray}
        \begin{aligned}
        & C_3\int_{\Tt^d} g(x,m^\epsi, h(\bfm^\epsi))\,\dx
- \frac{1}{C_1}\intt|Du^\epsi|^\gamma (m^\epsi)^{-\tau}\,\dx  + C_1 \\ 
 &\qquad\leq C_3(1+c)\max\bigg\{ \int_{\Tt^d} g_1(x,m,h(\bfm))
        \,\dx, \int_{\Tt^d} m\,\dx \bigg\}   
      +C_3c + C_1 \leq C.
        \qquad \label{meanu3}
        \end{aligned}
        \end{eqnarray}}%
Hence, in view of \eqref{H:int b}, \eqref{meanu2} and \eqref{meanu3} yield
{\setlength\arraycolsep{0.5pt}
        \begin{eqnarray}
        && \bigg|  -\int_{\Tt^d}
        H (x,Du^\epsi,D^2u^\epsi,m^\epsi,h(\bfm^\epsi))
        \, \dx \bigg| \leq C.\label{meanu4}
        \end{eqnarray}}%
Consequently, owing to \eqref{meanu0}, \eqref{meanu1}, and \eqref{meanu4}, we conclude that \eqref{uW1g2}
holds.    
\end{proof}

Next, we present the proof of Theorem~\ref{Thm:main}.

\begin{proof}[Proof of Theorem~\ref{Thm:main}]
As mentioned at the beginning of this section,
we set $A^\epsi= A^\epsi_0$. In view of Corollary~\ref{Cor:L=[0,1]} with $\mu=0$,
there exists $(m^\epsi, u^\epsi) \in
D(A^\epsi) \cap C^\infty (\Tt^d)\times C^\infty(\Tt^d)$
 such that 
\begin{eqnarray}\label{solmu=0}
\begin{aligned}
A^\epsi\begin{bmatrix}m^\epsi \\
u^\epsi \\
\end{bmatrix} = \begin{bmatrix}0 \\
0 \\
\end{bmatrix}
\end{aligned}
\end{eqnarray}
pointwise in $\Tt^d$. 

As usual, we assume that $\epsi_1$ and $\epsi_2$
take values on fixed sequences of positive
numbers converging to zero.
To simplify the notation, we
write \(\epsi \to 0^+\) instead of \( \epsi 
\to (0^+, 0^+).\)

We  now proceed in
two steps.

\uline{\textit{Step 1.}} In this step, we prove
that there exists \((\bfm,u) \in \mathcal{M}_{ac}(\Tt^d) \times
W^{1, \gamma} (\Tt^d)\) with $\int_{\Tt^d} m \,\dx =1$ such that, up to a
not relabeled subsequence, \((m^\epsi, u^\epsi) \weakly (m,u)\) weakly in \(  L^1(\Tt^d)\times
W^{1, \gamma} (\Tt^d)\)  as \( \epsi = (\epsi_1,
\epsi_2) 
\to  0^+.\)

 From \eqref{Est4} and Assumption \eqref{G:g
        ei}, it follows that there exists \(m \in L^1(\Tt^d)\) such that, up to a
not relabeled subsequence, \(m^\epsi
\weakly m\) weakly in \(  L^1(\Tt^d)\)  as \( \epsi
\to 0^+.\) Because $m^\epsi $ is nonnegative in $\Tt^d$, we
conclude that \(\bfm \in\mathcal{M}_{ac}(\Tt^d) \). 

Next, we observe that \((u^\epsi)_{\epsi}\)
is a bounded sequence in \(W^{1, \gamma} (\Tt^d)\)
by the Poincar\'e-Wirtinger inequality,
\eqref{uW1g1}, and \eqref{uW1g2}.
Therefore,  there is  \(u \in W^{1, \gamma} (\Tt^d)\)
satisfying, up to a
not relabeled further subsequence, \(u^\epsi
\weakly u\) weakly in \(  W^{1, \gamma} (\Tt^d)\)  as
\( \epsi
\to 0^+. \)  

Finally,  integrating the second equation
in (\ref{solmu=0})
over \(\Tt^d\) and letting $\epsi
\to 0^+$, we conclude that  \(\int_{\Tt^d}
m \,\dx =1\).
This completes Step~1.

\uline{\textit{Step 2.}} In this step, we show that \((m,u)\) satisfies \eqref{ws}.

We apply  a variation of  Minty's device (see, for instance, \cite{Eva, KiSt00}). Let $(\eta, v) \in {C^\infty
}(\Tt^d)
\times
{C^\infty}(\Tt^d)$ with $\inf_{\Tt^d}\eta>0$.
By Lemma~\ref{Lem:Ae mon} and \eqref{solmu=0},
we have 
{\setlength\arraycolsep{0.5pt}%
\begin{eqnarray*}
\begin{aligned}
0&\leq  \bigg(A^\epsi
\begin{bmatrix}\eta \\
v \\
\end{bmatrix} 
, \begin{bmatrix}\eta \\
 v
\end{bmatrix} 
-  
\begin{bmatrix}m^\epsi \\
u^\epsi \\
\end{bmatrix}  
\bigg)_{L^2(\Tt^d)\times L^2(\Tt^d)} \\
& = \int_{\Tt^d} \Big(-v-
H(x, Dv, D^2 v, \eta, h(\bfeta))\Big)(\eta - m^\epsi)
\, \dx\\
&\quad + \int_{\Tt^d} \Big(\epsi_1(\eta
+\Delta^{2p}\eta) +\epsi_2(\eta -\Delta \eta)+ \beta_{\epsi_1}(\eta) \Big) (\eta - m^\epsi)
\, \dx\\ 
&\quad + \int_{\Tt^d} \Big(\eta - \div(\eta D_p H (x, Dv, D^2 v, \eta, h(\bfeta)))\\
 &\hskip20mm + \big(\eta
D_{M_{ij}} H \big(x, Dv, D^2 v, \eta, h(\bfeta))  )_{x_i x_j} -1\Big) (v -
u^\epsi)
\, \dx\\  
 &\quad+\int_{\Tt^d} \Big( \epsi_1(v
+\Delta^{2p}v) +\epsi_2(v -\Delta v)\Big) (v
-
u^\epsi)
\, \dx.
\end{aligned}
\end{eqnarray*}}%
Letting $\epsi\to0^+$ and using \eqref{H:smooth},
\eqref{h}, the convergence results from Step~1, and the convergence $ \epsi_1(\eta
+\Delta^{2p}\eta) +\epsi_2(\eta -\Delta \eta)+ \beta_{\epsi_1}(\eta) +\epsi_1(v
+\Delta^{2p}v)+\epsi_2(v -\Delta v)  \to 0$
uniformly in $\Tt^d$ as \(\epsi \to 0^+\), which holds since $\inf_{\Tt^d}\eta>0$,
we obtain
{\setlength\arraycolsep{0.5pt}
\begin{eqnarray}\label{almostweak}
\begin{aligned}
& \int_{\Tt^d}
\Big[-v-
H(x, Dv, D^2 v, \eta, h(\bfeta))  \Big] (\eta - m)
\, \dx\\
&\qquad + \int_{\Tt^d} \big[\eta - \div(\eta
D_p H (x, Dv, D^2 v, \eta, h(\bfeta))\\
 &\hskip20mm + \big(\eta
D_{M_{ij}} H \big(x, Dv, D^2 v, \eta, h(\bfeta)
 )_{x_i x_j} -1  \big] (v -
u)
\, \dx \geq 0.
\end{aligned}
\end{eqnarray}}%
This concludes the proof in the  \(\EE
= \RR^+\) case because \((m,u) \in L^1(\Tt^d) \times  L^1(\Tt^d)\); thus, 
the left-hand side of \eqref{almostweak} coincides
with 
\begin{equation*}
\left\langle  \begin{bmatrix}\eta \\
v \\
\end{bmatrix}
 -  \begin{bmatrix}m \\
u \\
\end{bmatrix}, A \begin{bmatrix}\eta \\
v \\
\end{bmatrix} \right\rangle_{\mathcal{D}'(\Tt^d)
\times
\mathcal{D}'(\Tt^d), C^\infty(\Tt^d) \times
C^\infty(\Tt^d)}. 
\end{equation*}

In the  \(\EE=\RR^+_0\) case, we proceed as follows. Let  $\tilde \eta\in {C^\infty
}(\Tt^d)$ with \(\tilde\eta \geq 0\) and for \(\delta>0\),
set \(\eta_\delta := \tilde \eta +\delta\); using \eqref{almostweak} with
\(\eta\) replaced by \(\eta_\delta\) and letting
$\delta\to0^+$, by \eqref{H:smooth}, \eqref{hFD}, and \eqref{SobEmb},  we conclude that \eqref{almostweak}
holds for all $(\eta, v) \in {C^\infty
}(\Tt^d)
\times
{C^\infty}(\Tt^d)$ with $\eta\geq0$.
This concludes Step~2, as well as the proof of Theorem~\ref{Thm:main}.
\end{proof}

We end this section with a corollary to
the previous results.

\begin{corollary}
\label{Cor:limitws} Suppose that Assumptions \eqref{h}, \eqref{g}, \eqref{G:g
ei}, and \eqref{bddH}--\eqref{H:mon} are satisfied.
Then, there exist \((\bfm,u)
\in \Mm_{ac}(\Tt^d) \times W^{1,\gamma} (\Tt^d)
 \) with \(\int_{\Tt^d} m \,\dx = 1\) and a sequence
of positive numbers, \((\epsi_j)_{j\in\NN}\), convergent
to zero such
that:
\begin{enumerate}
\item[(i)] \((m,u)\) is a weak solution
  to \eqref{P}.
\item[(ii)] \((m,u)\) is the weak limit
in \(L^1(\Tt^d) \times W^{1,\gamma} (\Tt^d)\)
of a sequence \((m_{\epsi_j}, u_{\epsi_j})_{j\in\NN}
\subset D(A^{\epsi_j})\),
where each
\((m_{\epsi_j}, u_{\epsi_j})\) satisfies \eqref{RP}
with \(\epsi = \epsi_j\).
\end{enumerate} 
\end{corollary}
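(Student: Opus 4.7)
The statement is essentially a repackaging of what was already established in the proof of Theorem~\ref{Thm:main}. The plan is to show that the construction carried out in that proof produces precisely the sequence whose existence the corollary asserts, and that the limit pair it extracts satisfies both conclusions.

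First, I would fix any sequence of pairs $\epsi_j = (\epsi_{1,j}, \epsi_{2,j}) \in (0,1)^2$ with $\epsi_j \to (0^+,0^+)$. For each $j$, Corollary~\ref{Cor:L=[0,1]} applied with $\mu = 0$ (equivalently, Proposition~\ref{RPSol}) furnishes a smooth solution $(m_{\epsi_j}, u_{\epsi_j}) \in D(A^{\epsi_j}) \cap C^\infty(\Tt^d) \times C^\infty(\Tt^d)$ of the regularized problem \eqref{RP} with $\epsi = \epsi_j$, and with $\inf_{\Tt^d} m_{\epsi_j} > 0$. This immediately produces the sequence required by item~(ii).

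Second, I would invoke the uniform bounds of Proposition~\ref{punifbd}: estimates \eqref{uW1g1}--\eqref{uW1g2} give a bound on $(u_{\epsi_j})$ in $W^{1,\gamma}(\Tt^d)$ via the Poincar\'e--Wirtinger inequality, while \eqref{Est4} combined with Assumption~\eqref{G:g ei} provides weak relative compactness of $(m_{\epsi_j})$ in $L^1(\Tt^d)$. Passing to a (not relabeled) subsequence, one obtains $m \in L^1(\Tt^d)$ and $u \in W^{1,\gamma}(\Tt^d)$ with $m_{\epsi_j} \weakly m$ in $L^1(\Tt^d)$ and $u_{\epsi_j} \weakly u$ in $W^{1,\gamma}(\Tt^d)$. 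Since $m_{\epsi_j} \geq 0$, the limit satisfies $m \geq 0$, so $\bfm \in \Mm_{ac}(\Tt^d)$. Integrating the second equation of \eqref{RP} over $\Tt^d$ and letting $\epsi_j \to 0^+$ yields $\int_{\Tt^d} m\,\dx = 1$. This reproduces Step~1 of the proof of Theorem~\ref{Thm:main} along the chosen sequence.

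Third, Step~2 of the proof of Theorem~\ref{Thm:main} applies verbatim to this subsequence: monotonicity of $A^{\epsi_j}$ (Lemma~\ref{Lem:Ae mon}) combined with the vanishing of the regularization terms $\epsi_{1,j}(\eta + \Delta^{2p}\eta) + \epsi_{2,j}(\eta - \Delta \eta) + \beta_{\epsi_{1,j}}(\eta)$ (and the analogous term in $v$) uniformly on test pairs with $\inf_{\Tt^d} \eta > 0$, the smoothness of $H$ in Assumption~\eqref{H:smooth}, the continuity of $h$ from Assumption~\eqref{h}, and the weak convergences from the previous step, pushes the Minty inequality to the limit. A final density step replacing $\eta$ by $\eta_\delta = \tilde\eta + \delta$ and letting $\delta \to 0^+$ handles the $\EE = \RR^+_0$ case. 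The resulting pair $(\bfm,u)$ satisfies \eqref{ws}, proving (i). There is no substantive new obstacle here beyond what was already handled in Theorem~\ref{Thm:main}; the only care needed is to emphasize that item~(ii) names the very approximating sequence from which $(m,u)$ was extracted, so (i) and (ii) are established simultaneously by the same subsequence.
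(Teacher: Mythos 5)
Your proposal is correct and follows essentially the same approach as the paper: the paper's proof of this corollary is simply a pointer to Steps 1 and 2 of the proof of Theorem~\ref{Thm:main}, and you have accurately unpacked those steps, correctly observing that the subsequence extracted there is exactly the sequence whose existence the corollary asserts.
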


\begin{proof} 

 We established (ii) in the first step of the proof of Theorem ~\ref{Thm:main} and (i) in the second step of the same proof.
\end{proof}

\section{Properties of weak solutions}
\label{SOE}

Here, we examine some properties of weak solutions. 
To illustrate our methods, we consider the degenerate diffusion case, which corresponds to Hamiltonians
of the form~\eqref{degel} below. 
We show that the weak solutions given by Theorem \ref{Thm:main} are subsolutions of the first equation in \eqref{P}, the Hamilton-Jacobi equation.
Next, we recover an analog to the second equation in \eqref{P}, the Fokker-Planck equation.  
Finally, we show that 
in the set where \(m\) is positive,
the Hamiton-Jacobi equation in \eqref{P}
holds in a relaxed sense.

The main assumption of this section is the following.

\begin{hypotheses}{H}\setcounter{enumi}{\thehypH}
\item \label{dege}  
The Hamiltonian, $H$, can be written as 
\begin{equation}
\label{degel}
H(x, p, M, m, \theta)=H_0(x, p, m, \theta)-\sum_{i,j=1}^d
a_{ij}(x)M_{ij},
\end{equation}
where $H_0$ is a real-valued $C^\infty$ function
and $(a_{ij})_{1\leq i,j \leq d}$ is a 
\(C^\infty\) matrix-valued function. 
\begin{enumerate}
\item
\label{degelSUB}
Let $\gamma$ be as in 
Assumption~\eqref{bddH}. For all \(\varphi \in
C^\infty(\Tt^d)\) with \(\varphi \geq 0\), the
functional
\begin{equation*}
(m,u) \in L^1(\Tt^d) \times W^{1,\gamma}(\Tt^d)
\mapsto \intt \varphi\,  H_0(x,Du,m,h(\bfm))\,\dx
\end{equation*}
is sequentially weakly lower semicontinuous in
\(L^1(\Tt^d) \times W^{1,\gamma}(\Tt^d)\).

\item\label{FP}
Let $\gamma>1$ be as in 
Assumption~\eqref{bddH} with  $\tau=0$ and \(g\)
satisfying \eqref{G:sumg}. 
There exist constants, \(C_5>0\) and \(\alpha>0\),
such that,          
 for all $(x,p,m,\theta) \in        
     \Tt^d \times \RR^d  \times\EE \times \RR$,
     
\begin{enumerate}
\item \label{FPi}     
\( \displaystyle
|D_pH_0(x, p, m, \theta)|\leq C_5(1+|p|^{\gamma-1});
\)

\item \label{FPii} \(\displaystyle
g_1(x, m, \theta)\geq \frac{1}{C_5} m^\alpha-C_5.
\)
\end{enumerate} 

\item
\label{degelSUP}
Let \(\gamma>1\) and \(\alpha>0\) be as in Assumption~\eqref{FP}.
Let  \(L_0\) be the   Lagrangian 
associated with \(H_0\), defined for
 $(x,v,m,\theta) \in        
     \Tt^d \times \RR^d  \times\EE \times \RR$
by
\[
 L_0(x,v,m, \theta)=\sup_{p\in \RR^d} \left\{-p\cdot
v-H_0(x,p,m,
\theta) \right\}. 
\]
For all \((x,m,\theta) \in        
     \Tt^d   \times\EE \times \RR\), the map $p\mapsto
H_0 (x,p, m, \theta)$
is convex; moreover, the functional
\begin{equation*}
(m,J) \in L^{\alpha +1}(\Tt^d) \times L^\frac{(\alpha
+ 1) \gamma}{(\alpha
+ 1) \gamma-\alpha}(\Tt^d;\RR^d)
\mapsto \intt \,  mL_0 \left(x,\frac{J}{m},m,h(\bfm)\right)\dx
\end{equation*}
is sequentially weakly lower semicontinuous in
\(L^{\alpha +1}(\Tt^d) \times L^\frac{(\alpha
+ 1) \gamma}{(\alpha
+ 1) \gamma-\alpha}(\Tt^d;\RR^d)\).
\end{enumerate}
\setcounter{hypH}{\arabic{enumi}} 
\end{hypotheses}

We observe that the preceding assumption holds in the case given by \eqref{H's} with quadratic
Hamiltonian for $\tau=0$,  and with $g(m,\theta)=m^\alpha +\theta$, where $\alpha>0$. In general, Assumptions~\eqref{degelSUB}
and \eqref{degelSUP} depend on convexity properties
of the maps \((p,m,\theta) \in  \RR^d \times \EE \times \RR \mapsto H_0(x,p,m,
\theta)
\) and  \((v,m,\theta) \in  \RR^d \times \EE
\times \RR \mapsto m L_0\left(x, \frac{v}{m},m,
\theta\right)
\), respectively.

\subsection{Subsolution property}
\label{ssolp}
We begin by examining the Hamilton-Jacobi equation in \eqref{P}.
As previously stated, we suppose that  \(H\) satisfies \eqref{degel}. 
We say that $(\bfm,u)\in \Mm_{ac}(\Tt^d) \times
W^{1,\gamma} (\Tt^d)$ is a subsolution in the sense of distributions (i.e., in $\Dd'(\Tt^d)$)  of the Hamilton-Jacobi equation in \eqref{P} if for all $\varphi\in C^\infty(\Tt^d)$
with $\varphi\geq 0$, we have
\begin{equation}
\label{subsol}
\intt \left[\varphi (x)\left(u + H_0(x, Du, m, h(\bfm))\right)-\left(\varphi (x)a_{ij}(x)\right)_{x_ix_j}u\right]\dx\leq 0. 
\end{equation}
Next, we show
that the weak solutions given by Theorem \ref{Thm:main}
are subsolutions in $\Dd'(\Tt^d)$.

\begin{pro}
\label{subsolprop}
Suppose that Assumptions \eqref{h}, \eqref{g},
\eqref{G:g
ei},  \eqref{bddH}--\eqref{H:mon},  and \eqref{degelSUB} hold. 
Let $(m^\epsi, u^\epsi)\in D(A^\epsi)$ be a solution of \eqref{RP}. 
Let  \((m,u)
\in L^1(\Tt^d) \times W^{1,\gamma} (\Tt^d)
 \) with \(m \geq 0\) and \(\int_{\Tt^d} m \,\dx = 1\)  be a weak solution
of \eqref{P} obtained as a weak sublimit of \((m^\epsi,
u^\epsi)_\epsi\) in \(L^1(\Tt^d) \times W^{1,\gamma} (\Tt^d)\). Then, 
$(m,u)$ is a subsolution in the sense of distributions of the
Hamilton-Jacobi equation in \eqref{P}.       
\end{pro}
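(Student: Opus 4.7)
The plan is to start from the first equation of the regularized problem \eqref{RP}, evaluated pointwise at a classical solution $(m^\epsi,u^\epsi)$ whose weak sublimit is $(m,u)$. Using the decomposition \eqref{degel}, this equation reads
\begin{equation*}
-u^\epsi - H_0(x,Du^\epsi,m^\epsi,h(\bfm^\epsi)) + a_{ij}(x) u^\epsi_{x_i x_j} + \epsi_1 (m^\epsi + \Delta^{2p} m^\epsi) + \epsi_2(m^\epsi - \Delta m^\epsi) + \beta_{\epsi_1}(m^\epsi) = 0.
\end{equation*}
Given $\varphi \in C^\infty(\Tt^d)$ with $\varphi \geq 0$, I would multiply this identity by $\varphi$, integrate over $\Tt^d$, integrate by parts in the $a_{ij}$ term (moving both derivatives onto $\varphi a_{ij}$, which is legitimate because $u^\epsi \in C^\infty(\Tt^d)$ and boundary terms vanish on the torus), and use $\beta_{\epsi_1}(m^\epsi) \leq 0$ to discard the penalization term with the correct sign. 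This produces the inequality
\begin{equation*}
\intt \varphi H_0(x,Du^\epsi,m^\epsi,h(\bfm^\epsi))\,\dx \leq -\intt \varphi u^\epsi\,\dx + \intt (\varphi a_{ij})_{x_i x_j} u^\epsi\,\dx + R_{\epsi}[\varphi],
\end{equation*}
where $R_{\epsi}[\varphi]$ collects the $\epsi_1$- and $\epsi_2$-contributions.

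The second step is to show $R_{\epsi}[\varphi] \to 0$ as $\epsi \to 0^+$. After integration by parts, $R_{\epsi}[\varphi]$ is a linear combination of integrals of $m^\epsi$ against smooth test functions, each multiplied by $\epsi_1$ or $\epsi_2$. By integrating the second equation of \eqref{RP} over $\Tt^d$ and using \eqref{uW1g2}, $\int_{\Tt^d} m^\epsi\,\dx = 1 - (\epsi_1+\epsi_2)\int_{\Tt^d} u^\epsi\,\dx$ stays uniformly bounded, so each term in $R_\epsi[\varphi]$ is $O(\epsi_1) + O(\epsi_2) \to 0$. The two linear terms on the right-hand side pass to the limit using the weak convergence $u^\epsi \weakly u$ in $W^{1,\gamma}(\Tt^d)$ (hence in $L^\gamma(\Tt^d)$, which pairs with the smooth factors $\varphi$ and $(\varphi a_{ij})_{x_i x_j} \in L^{\gamma'}(\Tt^d)$).

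For the nonlinear term on the left, I would invoke Assumption \eqref{degelSUB} directly: since $m^\epsi \weakly m$ in $L^1(\Tt^d)$ and $u^\epsi \weakly u$ in $W^{1,\gamma}(\Tt^d)$, the sequential weak lower semicontinuity of $(m,u) \mapsto \intt \varphi H_0(x,Du,m,h(\bfm))\,\dx$ on $L^1 \times W^{1,\gamma}$ yields
\begin{equation*}
\intt \varphi H_0(x,Du,m,h(\bfm))\,\dx \leq \liminf_{\epsi\to 0^+} \intt \varphi H_0(x,Du^\epsi,m^\epsi,h(\bfm^\epsi))\,\dx.
\end{equation*}
Combining this with the limit of the right-hand side gives exactly \eqref{subsol}. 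The main obstacle is this last step: controlling the nonlinear functional in the limit. Without the convexity-type hypothesis \eqref{degelSUB}, weak convergence of $(m^\epsi,u^\epsi)$ is insufficient to handle $H_0$, which may be nonlinear in $(p,m,\theta)$; all the difficulty of the degenerate diffusion case is packaged into that assumption, and the rest of the argument is bookkeeping involving integration by parts, signs, and the uniform mass bound inherited from the regularized equation.
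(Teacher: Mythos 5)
Your proposal is correct and follows essentially the same route as the paper: test the first regularized equation against a nonnegative $\varphi\in C^\infty(\Tt^d)$, integrate by parts on the $a_{ij}$-term, discard the $\beta_{\epsi_1}$-term by its sign, send the $\epsi$-terms to zero, pass to the limit in the two linear terms by weak convergence of $u^\epsi$, and invoke Assumption \eqref{degelSUB} for the nonlinear $H_0$-term. The only (cosmetic) difference is how you justify $R_\epsi[\varphi]\to 0$ -- you extract a uniform $L^1$ bound on $m^\epsi$ by integrating the second equation and using \eqref{uW1g2}, whereas the paper reads that boundedness directly from the assumed weak $L^1$ convergence $m^\epsi\rightharpoonup m$.
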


\begin{remark}
\label{onlimws}
We observe that there exist \((m^\epsi, u^\epsi)\) and \((m,u)\)
as in the statement of Proposition~\ref{ssolp}
 in view of Corollary~\ref{Cor:L=[0,1]}
with $\mu=0$ and  Corollary~\ref{Cor:limitws}.
\end{remark}

\begin{proof}
        To simplify the notation, we do not
distinguish  $(m^\epsi, u^\epsi)_\epsi$ from its
 subsequence that converges to \((m,u)\).

Take $\varphi \in C^\infty(\Tt^d)$, $\varphi\geq 0$. Multiplying the first equation 
in \eqref{RP} by $\varphi$ and integrating over $\Tt^d$ gives
\begin{align*}
&\intt \left[ \varphi \left(u^\epsi + H_0(x, Du^\epsi, m^\epsi, h(\bfm^\epsi))\right)-\left(
\varphi a_{ij}\right)_{x_ix_j}u^\epsi\right]\dx-\epsi_1\intt m^\epsi(\varphi+\Delta^{2p}\varphi) \,\dx\\
&\quad -\epsi_2\intt m^\epsi(\varphi-\Delta\varphi)\,\dx = \intt\beta_{\epsi_1}(m^\epsi)\varphi
\,\dx\leq 0 
\end{align*}
since $\beta_{\epsi_1}(\cdot)\leq 0$.      

Because $m^\epsi\rightharpoonup m$ in $L^1(\Tt^d)$, we have
\[
\lim_{\epsi\to 0^+} \left[\epsi_1\intt m^\epsi(\varphi+\Delta^{2p}\varphi)\,\dx +\epsi_2\intt m^\epsi(\varphi-\Delta\varphi)\,\dx\right] = 0. 
\]
Next, since \(u^\epsi \to u  \) in $L^\gamma(\Tt^d)$, we have   
\[
\lim_{\epsi\to 0^+} \intt \left[\varphi u^\epsi -(\varphi a_{ij})_{x_ix_j}u^\epsi
\right]
\dx
=
\intt  \left[\varphi
u -(\varphi a_{ij})_{x_ix_j}u
\right]
\dx.
\]
Finally,  Assumption~\eqref{degelSUB}, together
with the weak convergences $m^\epsi\rightharpoonup m$ in $L^1(\Tt^d)$ and $u^\epsi\rightharpoonup u$ in $W^{1,\gamma}(\Tt^d)$, gives
\[
\liminf_{\epsi\to 0^+}
\intt \varphi\, H_0(x, Du^\epsi, m^\epsi, h(\bfm^\epsi))\,\dx
\geq \intt \varphi\, H_0(x, Du, m, h(\bfm)) \,\dx.
\]
Consequently, we have \eqref{subsol}.   
\end{proof}

\subsection{Fokker-Planck equation}

To pursue
 our analysis further, we consider the case without congestion, $\tau=0$,  a natural growth condition on $D_pH_0$, and a power-like growth in $g_1$. This case corresponds to Assumption~\eqref{FP}. We begin by examining the integrability of the drift in the Fokker-Planck
equation.

\begin{pro}
\label{boundp}
Suppose that 
 Assumptions~\eqref{g}, \eqref{bddH},  and \eqref{FP} hold.
Let $q=\frac{(\alpha+1) \gamma}{(\alpha + 1) \gamma-\alpha}>1$.
Then, there exists a positive constant, $C$, 
such that, for any $\epsi\in (0,1)^2$
and any solution  $(u^\epsi, m^\epsi)\in D(A^\epsi)
$  to 
\eqref{RP}, we have
\[
\Vert m^\epsi\Vert_{L^{\alpha+1}(\Tt^d)} + \|m^\epsi D_pH(\cdot,Du^\epsi, m^\epsi,
 h(\bfm^\epsi))\|_{L^q(\Tt^d;\RR^d)}\leq C.
\]
\end{pro}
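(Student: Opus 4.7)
\begin{proof}[Proof plan of Proposition~\ref{boundp}]
The plan is to bootstrap the a priori bounds of Proposition~\ref{punifbd} and Lemma~\ref{Lem:ape} (with $\mu = 0$ and $\tau = 0$) into the two desired estimates, using the growth conditions in Assumption~\eqref{FP} and a carefully chosen H\"older splitting. Throughout, \(C\) denotes a constant depending only on the problem data whose value may change from line to line.

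\textbf{Step 1: $L^{\alpha+1}$-bound on $m^\epsi$.} From \eqref{Est4} and Assumption~\eqref{FPii}, we have
\[
\intt (m^\epsi)^{\alpha+1}\,\dx \leq C_5\intt m^\epsi g_1(x, m^\epsi, h(\bfm^\epsi))\,\dx + C_5^2 \intt m^\epsi\,\dx.
\]
The first term on the right-hand side is bounded by \eqref{Est4}. For the second term, applying \eqref{G:g ub} with $\delta = 1$ together with \eqref{Est4} yields $\intt m^\epsi\,\dx \leq C$. Hence, $\Vert m^\epsi \Vert_{L^{\alpha+1}(\Tt^d)} \leq C$.

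\textbf{Step 2: $L^1$-bound on $m^\epsi |Du^\epsi|^\gamma$.} Since $\tau = 0$ in Assumption~\eqref{FP}, the integral \(\int_{\Tt^d}|Du^\epsi|^\gamma (m^\epsi)^{1-\tau}\,\dx\) in \eqref{ape1} (with $\mu=0$) becomes \(\int_{\Tt^d}m^\epsi|Du^\epsi|^\gamma\,\dx\). Hence Lemma~\ref{Lem:ape} gives
\[
\intt m^\epsi |Du^\epsi|^\gamma\,\dx \leq C.
\]

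\textbf{Step 3: Bound on the drift.} Using \eqref{degel} together with \eqref{FPi}, we have $|D_p H(x, Du^\epsi, m^\epsi, h(\bfm^\epsi))| \leq C_5(1 + |Du^\epsi|^{\gamma-1})$, so that
\[
\intt \bigl(m^\epsi |D_p H|\bigr)^q\,\dx \leq C \intt (m^\epsi)^q\,\dx + C\intt (m^\epsi)^q |Du^\epsi|^{(\gamma-1)q}\,\dx.
\]
Because $\gamma > 1$, we have $q \leq \alpha+1$, so the first integral is controlled by Step~1. For the second integral, we apply H\"older's inequality with the splitting
\[
(m^\epsi)^q |Du^\epsi|^{(\gamma-1)q} = \bigl(m^\epsi |Du^\epsi|^\gamma\bigr)^{(\gamma-1)q/\gamma} \cdot (m^\epsi)^{q/\gamma},
\]
using exponents $p_1 = \gamma/[(\gamma-1)q]$ and its conjugate $p_1' = (\alpha+1)\gamma - \alpha$. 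A direct computation shows that the explicit value of $q$ in the statement is precisely the value making $q p_1'/\gamma = \alpha+1$ and $1/p_1 + 1/p_1' = 1$. Consequently,
\[
\intt (m^\epsi)^q |Du^\epsi|^{(\gamma-1)q}\,\dx \leq \left(\intt m^\epsi |Du^\epsi|^\gamma\,\dx\right)^{(\gamma-1)q/\gamma} \left(\intt (m^\epsi)^{\alpha+1}\,\dx\right)^{1/[(\alpha+1)\gamma - \alpha]} \leq C,
\]
by Steps~1 and~2, which completes the proof.
\end{proof}

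The only genuine content is the H\"older splitting in Step~3: the exponent $q$ is precisely calibrated so that the two factors match the two a priori bounds available, namely $m |Du|^\gamma \in L^1$ and $m \in L^{\alpha+1}$. I expect no other obstacle, since Steps~1 and~2 are immediate consequences of the estimates established in Section~\ref{Sect:pmthm}.
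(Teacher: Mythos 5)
Your proof is correct and follows essentially the same route as the paper's: the $L^{\alpha+1}$-bound on $m^\epsi$ via \eqref{Est4}, \eqref{FPii}, and \eqref{G:g ub}, followed by a H\"older splitting of $m^\epsi|Du^\epsi|^{\gamma-1}$ into the two factors controlled in $L^1$, namely $m^\epsi|Du^\epsi|^\gamma$ and $(m^\epsi)^{\alpha+1}$. The paper organizes the H\"older step slightly differently (writing $m^\epsi|Du^\epsi|^{\gamma-1} = [(m^\epsi)^{\alpha+1}]^{1/s}(|Du^\epsi|^\gamma m^\epsi)^{1/r}$ with $s=\gamma(\alpha+1)$, $r=\gamma/(\gamma-1)$, $1/q=1/r+1/s$, and applying H\"older to the $L^q$-norm directly), but the computation is identical to yours after raising to the $q$-th power.
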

\begin{proof}

By \eqref{Est4}, Assumption~\eqref{FPii}, and Assumption~\eqref{G:g
ub} with \(\delta=1\),  $m^\epsi$
is bounded 
in $L^{\alpha + 1}(\Tt^d)$ independently of
\(\epsi\). In particular, because \(q<\alpha+1\), \((m^\epsi)_\epsi\)
is bounded in \(L^q (\Tt^d)\).
Then,
in view of Assumptions~\eqref{degel} and \eqref{FPi},
 it suffices to prove that   \(m^\epsi |Du^\epsi|^{\gamma-1}\)
is bounded in \(L^q (\Tt^d)\) independently of
\(\epsi\).

Set \(s=  {\gamma (\alpha +1)}\) and \( r
= \frac \gamma {\gamma -1}\). Observe that
\(m^\epsi |Du^\epsi|^{\gamma-1} = \left[( m^\epsi)^{\alpha +1} \right]^{\frac 1 s} \left(|Du^\epsi|^\gamma
m^\epsi \right)^{\frac 1 r}\) and \(\frac 1 q = \frac 1 r + \frac 1 s\).
As we proved before, \((m^\epsi)^{\alpha+1}\)
is bounded in $L^1(\Tt^d)$ independently of \(\epsi\).
Moreover, from Lemma \ref{Lem:ape} with \(\tau=0\), we have that $|Du^\epsi|^\gamma m^\epsi $
is bounded 
in $L^1(\Tt^d)$  independently of \(\epsi\).
The result then follows by H\"older's
inequality. 
\end{proof}

\begin{pro}
       \label{wfp}
Suppose that Assumptions \eqref{h}, \eqref{g},
\eqref{G:g
ei},  \eqref{bddH}--\eqref{H:mon},   and \eqref{FP}
hold. Let $(u^\epsi, m^\epsi)\in D(A^\epsi)$ be a solution of \eqref{RP}. Let  \((m,u)
\in L^1(\Tt^d) \times W^{1,\gamma} (\Tt^d)
 \) with \(m \geq 0\) and \(\int_{\Tt^d} m \,\dx
= 1\)  be a weak solution
to \eqref{P} obtained as a weak sublimit of \((m^\epsi,
u^\epsi)_\epsi\) in \(L^1(\Tt^d) \times W^{1,\gamma}
(\Tt^d)\).
Let $q=\frac{(\alpha+1) \gamma}{(\alpha+1) \gamma-\alpha}$
and set $J^\epsi=m^\epsi D_pH(x,Du^\epsi, m^\epsi, h(\bfm^\epsi))$. 
Then, there exists $J\in L^q(\Tt^d;\RR^d)$ such that, up to a not relabeled subsequence, $J^\epsi\rightharpoonup J$ in $L^q(\Tt^d;\RR^d)$. Moreover,
\begin{equation}
\label{wfpe}
m-\div(J)-(a_{ij}(x)m)_{x_ix_j}=1
\end{equation}
in $\Dd'(\Tt^d)$.
\end{pro}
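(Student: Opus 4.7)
The plan is to obtain $J$ by a weak-compactness argument and then pass to the limit in the regularized Fokker--Planck equation (the second row of \eqref{RP}). To begin, Proposition~\ref{boundp} yields the uniform bound $\|J^\epsi\|_{L^q(\Tt^d;\RR^d)}\leq C$; since $q>1$, Banach--Alaoglu produces some $J\in L^q(\Tt^d;\RR^d)$ such that, along a further (not relabeled) subsequence of the one already fixed in the statement, $J^\epsi\rightharpoonup J$ weakly in $L^q(\Tt^d;\RR^d)$.

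Next, I would exploit the structural assumption \eqref{degel}, which forces $D_pH=D_pH_0$ and $D_{M_{ij}}H=-a_{ij}(x)$, to rewrite the second row of \eqref{RP} in the compact form
\[
m^\epsi-\div(J^\epsi)-(a_{ij}m^\epsi)_{x_ix_j}-1+\epsi_1(u^\epsi+\Delta^{2p}u^\epsi)+\epsi_2(u^\epsi-\Delta u^\epsi)=0.
\]
Testing against an arbitrary $\varphi\in C^\infty(\Tt^d)$ and integrating by parts on $\Tt^d$ to move every derivative onto $\varphi$ produces
\[
\intt\varphi\,m^\epsi\,\dx+\intt D\varphi\cdot J^\epsi\,\dx-\intt a_{ij}\varphi_{x_ix_j}m^\epsi\,\dx-\intt\varphi\,\dx+R^\epsi(\varphi)=0,
\]
where $R^\epsi(\varphi):=\epsi_1\intt(\varphi+\Delta^{2p}\varphi)u^\epsi\,\dx+\epsi_2\intt(\varphi-\Delta\varphi)u^\epsi\,\dx$.

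I would then pass to the limit term by term. The weak $L^1$-convergence $m^\epsi\rightharpoonup m$, tested against the bounded smooth functions $\varphi$ and $a_{ij}\varphi_{x_ix_j}$, dispatches the two $m^\epsi$-terms; the weak convergence $J^\epsi\rightharpoonup J$ in $L^q$, tested against $D\varphi\in L^{q'}(\Tt^d;\RR^d)$, dispatches the drift term. For the remainder $R^\epsi(\varphi)$, the estimates \eqref{uW1g1}--\eqref{uW1g2} give $u^\epsi$ uniformly bounded in $W^{1,\gamma}(\Tt^d)\hookrightarrow L^1(\Tt^d)$, so $|R^\epsi(\varphi)|\leq C(\varphi)(\epsi_1+\epsi_2)\to 0$. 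Undoing the integration by parts in the limiting identity then delivers \eqref{wfpe} in $\Dd'(\Tt^d)$.

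I do not expect a genuine obstacle. The statement asks only for the existence of \emph{some} weak limit $J$, not for the identification $J=m\,D_pH(x,Du,m,h(\bfm))$, so no strong compactness for $Du^\epsi$ or $m^\epsi$ is needed and no Minty-type device has to be redeployed here. The only bookkeeping point is that the weak limits $m^\epsi\rightharpoonup m$ in $L^1$, $u^\epsi\rightharpoonup u$ in $W^{1,\gamma}$ and $J^\epsi\rightharpoonup J$ in $L^q$ must all be realized on a common subsequence, which is achieved by a standard diagonal extraction applied to the subsequence already selected in Corollary~\ref{Cor:limitws}.
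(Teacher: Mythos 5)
Your argument is correct and is precisely the paper's, only spelled out in more detail: the paper's one-line proof invokes Proposition~\ref{boundp} for the weak $L^q$ compactness of $J^\epsi$ and then says ``the result follows by convergence in $\Dd'(\Tt^d)$ together with the second equation in \eqref{RP},'' which is exactly the test-function and term-by-term limit passage you carried out. The only cosmetic remark is that no diagonal extraction is actually needed at the end --- one further subsequence for $J^\epsi$ suffices, since the already-fixed convergences of $(m^\epsi,u^\epsi)$ persist along any further subsequence.
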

\begin{proof}
By Proposition~\ref{boundp}, up to a not relabeled
subsequence, we have $J^\epsi\rightharpoonup J$ in $L^q(\Tt^d;\RR^d)$ for some $J\in L^q(\Tt^d;\RR^d)$. 
The result follows by convergence in \(\Dd'(\Tt^d)\)  together with  the second equation in \eqref{RP}.  
\end{proof}

\subsection{Supersolution property}

Here, we revisit the Hamilton-Jacobi equation in \eqref{P}. From the results in Subsection \ref{ssolp}, we know that weak solutions are subsolutions. Next, we prove the converse result; that is,  weak solutions solve,   in a relaxed sense, the Hamilton-Jacobi in the set where \(m\) is positive.
 
\begin{pro}
\label{supersolprop}
Suppose that Assumptions \eqref{h}, \eqref{g},
\eqref{G:g
ei},  \eqref{bddH}--\eqref{H:mon},    \eqref{FP}, and \eqref{degelSUP}
hold. Let $q=\frac{(\alpha+1) \gamma}{(\alpha+1) \gamma-\alpha}$.
Let $(u^\epsi, m^\epsi)$, $(u,m)$, and  $J$ be as in Proposition~\ref{wfp}. Then,  for all $\varphi\in C^\infty(\Tt^d)$, we have
\begin{equation}
\label{wsuper}
\intt 
\left[J D\varphi - H_0(x, D\varphi, m, h(\bfm))m-u \right] \dx\leq 0. 
\end{equation}
\end{pro}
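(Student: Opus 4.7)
The plan is to combine the convexity of $H_0$ in $p$ with an energy identity derived from \eqref{RP}, then pass to the limit using the weak lower semicontinuity in Assumption~\eqref{degelSUP}. Fix $\varphi \in C^\infty(\Tt^d)$ and let $(m^\epsi, u^\epsi, J^\epsi)$ and $(m, u, J)$ be as in Proposition~\ref{wfp}. Since $J^\epsi = m^\epsi D_p H_0(x, Du^\epsi, m^\epsi, h(\bfm^\epsi))$, the Fenchel--Young equality at $(p, v) = (Du^\epsi, -J^\epsi/m^\epsi)$ gives the pointwise identity
\[
m^\epsi L_0(x, -J^\epsi/m^\epsi, m^\epsi, h(\bfm^\epsi)) = J^\epsi \cdot Du^\epsi - m^\epsi H_0(x, Du^\epsi, m^\epsi, h(\bfm^\epsi)).
\]

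First I would derive an energy identity. Using $H = H_0 - a_{ij}(x)u_{x_ix_j}$, I test the first equation of \eqref{RP} against $m^\epsi$ and the second against $u^\epsi$, integrate by parts, and add; the $a_{ij}$ cross-terms cancel and the result is
\[
\intt \left[J^\epsi \cdot Du^\epsi - m^\epsi H_0(x, Du^\epsi, m^\epsi, h(\bfm^\epsi))\right] \dx = \intt u^\epsi \dx - E^\epsi - \intt m^\epsi \beta_{\epsi_1}(m^\epsi) \dx,
\]
where $E^\epsi = \epsi_1 \intt \bigl[(u^\epsi)^2 + (\Delta^p u^\epsi)^2 + (m^\epsi)^2 + (\Delta^p m^\epsi)^2\bigr] \dx + \epsi_2 \intt \bigl[(u^\epsi)^2 + |Du^\epsi|^2 + (m^\epsi)^2 + |Dm^\epsi|^2\bigr] \dx \geq 0$. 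Because $\beta_{\epsi_1}$ vanishes on $\{s \geq \epsi_1\}$, the penalization term satisfies $\bigl|\intt m^\epsi \beta_{\epsi_1}(m^\epsi) \dx\bigr| \leq \epsi_1 \intt |\beta_{\epsi_1}(m^\epsi)| \dx \leq \epsi_1 C$ by estimate~\eqref{Est3}, and thus vanishes as $\epsi_1 \to 0^+$.

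Next I invoke the convexity of $p \mapsto H_0$ (Assumption~\eqref{degelSUP}): the tangent-plane inequality at $p = Du^\epsi$ evaluated against $D\varphi$, multiplied by $m^\epsi \geq 0$, combined with the pointwise identity above, yields
\[
J^\epsi \cdot D\varphi - m^\epsi H_0(x, D\varphi, m^\epsi, h(\bfm^\epsi)) \leq m^\epsi L_0(x, -J^\epsi/m^\epsi, m^\epsi, h(\bfm^\epsi)).
\]
Integrating, using the energy identity and $E^\epsi \geq 0$, the right-hand side is bounded by $\intt u^\epsi \dx + \epsi_1 C$. I then apply the weak lower semicontinuity from Assumption~\eqref{degelSUP} to the pair $(m^\epsi, -J^\epsi) \weakly (m, -J)$ in $L^{\alpha+1}(\Tt^d) \times L^q(\Tt^d; \RR^d)$ (Proposition~\ref{boundp} supplies the uniform bounds), obtaining
\[
\intt m L_0(x, -J/m, m, h(\bfm)) \dx \leq \liminf_\epsi \intt m^\epsi L_0(x, -J^\epsi/m^\epsi, m^\epsi, h(\bfm^\epsi)) \dx \leq \intt u \dx.
\]
Finally, Fenchel--Young at $(p, v) = (D\varphi, -J/m)$ gives $J \cdot D\varphi - m H_0(x, D\varphi, m, h(\bfm)) \leq m L_0(x, -J/m, m, h(\bfm))$ a.e.\ (the products with $m \geq 0$ being interpreted as $0$ where $m = 0$); integrating and combining with the preceding bound produces \eqref{wsuper}.

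The main obstacle is passing $\intt m^\epsi H_0(x, D\varphi, m^\epsi, h(\bfm^\epsi))\dx$ through the limit: only weak convergence of $m^\epsi$ is available, and $H_0$ exhibits no useful one-sided monotonicity in $m$ that would directly furnish an inequality in the right direction. The way around it is to replace this Hamiltonian quantity by its convexity upper bound $m^\epsi L_0(x, -J^\epsi/m^\epsi, \cdots) + J^\epsi \cdot D\varphi$, since the Lagrangian side is precisely the object for which Assumption~\eqref{degelSUP} provides weak lower semicontinuity in the favorable direction. A secondary subtlety is the sign convention: the $-p \cdot v$ in the definition of $L_0$ from \eqref{degelSUP} means the Fenchel-conjugate of $p = Du^\epsi$ with $J^\epsi = m^\epsi D_p H_0$ is $v = -J^\epsi/m^\epsi$, and the weak lower semicontinuity must be invoked on the sign-flipped pair $(m^\epsi, -J^\epsi)$ rather than $(m^\epsi, J^\epsi)$.
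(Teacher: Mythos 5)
Your proposal is correct and follows essentially the same route as the paper's proof: test the two equations of \eqref{RP} against $m^\epsi$ and $u^\epsi$, cancel the $a_{ij}$ cross-terms and the bilinear $u^\epsi m^\epsi$ terms, drop the nonnegative $\epsi$-regularization integrals, control the penalization term via $|m^\epsi\beta_{\epsi_1}(m^\epsi)|\leq\epsi_1|\beta_{\epsi_1}(m^\epsi)|$, recognize the surviving quantity as $\intt m^\epsi L_0(x,-J^\epsi/m^\epsi,\cdots)\dx$ via Fenchel--Young equality, pass to the limit with Assumption~\eqref{degelSUP}, and conclude with Fenchel--Young inequality at $(D\varphi,-J/m)$. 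The only deviation is the intermediate $\epsi$-level tangent-plane inequality against $D\varphi$, which is logically redundant since the final chain only needs the Lagrangian bound $\intt m^\epsi L_0\,\dx\leq\intt u^\epsi\,\dx+\epsi_1 C$ before taking limits.
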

\begin{remark}
We regard \eqref{wsuper} as a weak version of 
\begin{equation}
\label{wsuper2}
\intt m \left[-u-H_0(x, Du, m, h(\bfm))+a_{ij}(x)u_{x_i
x_j}\right]
\dx \leq 0. 
\end{equation}
To see this, we observe that if $u$ is regular enough, then
\eqref{wsuper} for
$\varphi=u$ yields  
\begin{equation}
\label{wsuperbis}
\intt 
\left[J Du - H_0(x, Du, m, h(\bfm))m-u \right]
\dx \leq 0.
\end{equation} 
Then, due to \eqref{wfpe}, we get 
\[
\intt 
(J D u - u) \, \dx =\intt m(-u+a_{ij}(x)u_{x_ix_j})
\,\dx.
\]
Combining the previous identity with  \eqref{wsuperbis}
gives \eqref{wsuper2}.
\end{remark}

\begin{proof}[Proof of Proposition~\ref{supersolprop}]
To simplify the notation, we do not relabel
the weakly convergent subsequence of \((m^\epsi, u^\epsi,J^\epsi)_\epsi\)
to \((m,u,J) \) in \(L^1(\Tt^d) \times W^{1,\gamma}(\Tt^d)
\times L^q(\Tt^d;\RR^d)\). Note that in view of
Proposition~\ref{boundp}, we also have \(m^\epsi
\weakly m\) weakly in \(L^{\alpha +1}(\Tt^d)\).

Next, we  observe that 
\begin{align*}
&\intt m^\epsi \left[-u^\epsi-H_0(x, Du^\epsi, m^\epsi, h(\bfm^\epsi))+a_{ij}(x)u^\epsi_{x_ix_j}\right
] \dx\\
&+\intt m^\epsi \left[\epsi_1(m^\epsi
+\Delta^{2p}m^\epsi) +\epsi_2(m^\epsi -\Delta m^\epsi)+ 
\beta_{\epsi_1}(m^\epsi)\right] \dx\\
&+ \intt u^\epsi \left[ m^\epsi - \div(m^\epsi D_p H_0) - \big(m^\epsi
a_{ij}\big)_{x_i x_j} -1 + \epsi_1(u^\epsi
+\Delta^{2p}u^\epsi) +\epsi_2(u^\epsi -\Delta u^\epsi)\right]\dx = 0.
\end{align*}
Integrating  the terms multiplied by $\epsi_1$ and $\epsi_2$ by parts, 
and taking into account their sign, we conclude
that\begin{align*}
&\intt m^\epsi \left[-u^\epsi-H_0(x, Du^\epsi, m^\epsi, h(\bfm^\epsi))+a_{ij}(x)u^\epsi_{x_ix_j}+
\beta_{\epsi_1}(m^\epsi) \right]\dx\\
&+ \intt u^\epsi \left[ m^\epsi - \div(m^\epsi D_p H_0) - \big(m^\epsi
a_{ij}\big)_{x_i x_j} -1 \right]\dx \leq 0.
\end{align*}    
By Proposition~\ref{punifbd}, 
we have that $\|\beta_{\epsi_1}(m^\epsi)\|_{L^1(\Tt^d)}$ is bounded independently of \(\epsi\). Accordingly, we find
\[
\lim_{\epsi\to0^+}\intt m^\epsi \beta_{\epsi_1}(m^\epsi) \,\dx= 0
\] 
because \(|m^\epsi \beta_{\epsi_1}(m^\epsi)| \leq
\epsi_1|\beta_{\epsi_1}(m^\epsi)|\) by definition
of \(\beta_{\epsi_1}(\cdot)\).    
Hence,
\begin{align*}
        &
        \liminf_{\epsi\to 0^+} \bigg(\intt m^\epsi \left(-H_0(x, Du^\epsi , m^\epsi , h(\bfm^\epsi ))\right) \dx
+ \intt u^\epsi \left[ - \div(m^\epsi  D_p H_0)  -1 \right]\dx \bigg) \leq 0.
\end{align*}

Recalling that $J^\epsi=m^\epsi D_p H_0(x, Du^\epsi , m^\epsi , h(\bfm^\epsi ))$, 
the definition of the Legendre transform and Assumption~\eqref{degelSUP}
give
\[
 \liminf_{\epsi \to 0^+}
\intt m^\epsi L_0\left(x,\frac{-J^\epsi}{m^\epsi}, m^\epsi, h(\bfm^\epsi)\right) \dx\leq \lim_{\epsi\to0^+}
\intt u^\epsi\,\dx = \intt u\,\dx,
\]
where we used the convergence \(u^\epsi \to u\)
in \(L^\gamma(\Tt^d)\). Then, because \(-J^\epsi \weakly -J\) in \(L^q(\Tt^d;\RR^d)\)
and \(m^\epsi \weakly m\) in \(L^{\alpha+1}(\Tt^d)\),  Assumption~\eqref{degelSUP} implies
that\begin{equation}
\label{ssproof1}
\intt m L_0\left(x,\frac{-J}{m}, m, h(\bfm)\right)
\dx\leq  \intt u\,\dx.
\end{equation}
On the other hand, using  the definition of the Legendre transform again, we have
\begin{equation}
\label{ssproof2}
m L_0\left(x,\frac{-J}{m}, m, h(\bfm)\right)\geq J D\varphi - H_0(x, D\varphi, m, h(\bfm)) m
\end{equation}
  for any $\varphi\in C^\infty(\Tt^d)$.
Consequently, \eqref{ssproof1} and \eqref{ssproof2} yield
\eqref{wsuper}.
\end{proof}

\section{Improved regularity}
\label{Sect:eaa}

In this section, we consider the quadratic case for either first-order problems or elliptic
problems with constant coefficients, encoded
in the Assumption~\eqref{CC} below. We  get improved regularity for $m$
and $u$, including higher integrability and Sobolev
estimates for $m$. Further, we show that the Fokker-Plank
equation holds pointwise, not just in the sense
of distributions.
Finally, we prove that the Hamilton-Jacobi equation
holds pointwise in the set where $m$ is positive.

The main assumption of this section is the following.

\begin{hypotheses}{H}\setcounter{enumi}{\thehypH}
        \item \label{CC}      
The Hamiltonian, $H$, is of the form
{\setlength\arraycolsep{0.5pt}%
        \begin{eqnarray}\label{Hquad}
        \begin{aligned}
        H(x,p,M,m, \theta) =  \frac{|p|^2}{2}
- V(x, m,\theta) - \sigma^2  \sum_{i=1}^d M_{ii},
        \end{aligned}
        \end{eqnarray}
}%
where $\sigma \in \RR$ and  $V:\Tt^d\times \EE\times
\Rr\to\Rr$ is a $C^\infty$ function.
\begin{enumerate}
\item \label{V:H1-H3} There exist  constants,
\( c_1>0\), \(c_2>0\), and \(c_3>0\), and  a 
continuous function, $g:\Tt^d \times \EE \times\RR\to\RR$,
 such that:  
        \begin{enumerate}
        \item \label{V:lb}  For all $(x,m,\theta)
\in     \Tt^d  \times\EE \times \RR$, 
        \[V(x,m,\theta) \geq c_1 g(x,m,\theta)
        - \frac 1 {c_2}.\]
        \item  For all $\bfm  \in
\Mm_{ac}(\Tt^d)$, 
\begin{equation*}
\intt V(x ,m, h(\bfm))\,\dx
        \leq c_2  + c_3\intt g(x,m, h(\bfm))\,\dx.
\end{equation*}
        \item  For all \(\eta, \, \bar\eta   \in
{C^\infty}(\Tt^d; \EE)\),
  \[ \intt \left[ V(x, \eta, h(\bfeta)) -V(x,
  \bar \eta, h(\bar \bfeta))\right](\eta - \bar
\eta)\,\dx
  \geq 0.
  \]
        \end{enumerate}
\item \label{V:extra}
There exist  constants, \(\alpha>0\) and \(\kappa_1>0\),
 such that: 
\begin{enumerate}
\item  \label{V:m^a+1}
For all $(x,m,\theta) \in        
     \Tt^d  \times\EE \times \RR$, the function
\(g\) in \eqref{V:H1-H3} satisfies \eqref{G:sumg}
        with 
        \[g_1(x,m, \theta) \geq \frac 1{\kappa_1}
m^{\alpha}-{\kappa_1}.\]
\item  \label{V:Dm^a+1}
For all \(\eta
  \in {C^\infty}(\Tt^d; \EE)\),
\[
\intt 
\left(V(x,\eta,h(\bfeta)
)\right)_{x_i}\eta_{x_i}
\,\dx\geq
\frac 1{\kappa_1}\intt \eta^{\alpha-1}|D\eta|^2
\,\dx -
{\kappa_1}.  
\] 
\end{enumerate}

\item \label{V:Fatou}
Let \(\alpha>0\) and \(\kappa_1>0\)
be as in \eqref{V:extra}. For all $(x,m,\theta)
\in        
     \Tt^d  \times\EE \times \RR$, we have 
\begin{equation*}
V(x,m,\theta) \leq \kappa_1 (1 + m^\alpha + |\theta|).
\end{equation*}
\end{enumerate}
\end{hypotheses}

We observe that a Hamiltonian  satisfying
\eqref{V:H1-H3} is a particular case of a Hamiltonian
for which \eqref{bddH}--\eqref{H:mon} hold (with
\(\gamma
= 2\) and \(\tau=0\)).

In some cases, we will also require the operator \(h\)  to satisfy
the following condition:
\begin{hypotheses}{h}\setcounter{enumi}{\thehyph}
\item \label{hh}
There exists a positive constant, \(C\), such that,
for all \(\bfm\), \(\bar \bfm \in \Mm_{ac}(\Tt^d)\),
we have 
\begin{equation*}
\Vert h(\bfm) - h(\bar\bfm) \Vert_{L^1(\Tt^d)}
\leq C \Vert m - \bar m \Vert_{L^1(\Tt^d)}.
\end{equation*}

\end{hypotheses}

Examples of operators  satisfying the
Assumptions~\eqref{h} and \eqref{hh} are those in \eqref{H's}.

For Hamiltonians as in \eqref{Hquad}, \eqref{P} takes the form
\begin{equation}\label{Pquad}
\begin{cases}
\displaystyle-u- \frac{|Du|^2}{2} + V(x, m,h(\bfm)) + \sigma^2 \Delta u =0,\\ 
m - \div(m Du) - \sigma^2 \Delta m -1  =0.
\end{cases}
\end{equation}


First, we consider the regularized problem \eqref{RP} or, equivalently,  \eqref{Pe} with $\mu=0 $ and prove additional
estimates
that are  uniform in \(\epsi \in (0,1)^2\).

\begin{lemma}\label{Lem:moreape}
Suppose that Assumptions~\eqref{g}, \eqref{V:H1-H3},
and \eqref{V:extra} hold.
Fix $\epsi \in (0,1)^2$. Let $(m^\epsi,u^\epsi)\in
C^\infty(\Tt^d)\times C^\infty(\Tt^d)$ with
$\inf_{\Tt^d}m^\epsi>0$
 in $\Tt^d$ be a solution of \eqref{RP}.
 Then, there exists a positive constant,
\(C\), independent of \(\epsi\)  such that
{\setlength\arraycolsep{0.5pt}%
\begin{eqnarray*}
\begin{aligned}
&\Vert
u^\epsi\Vert_{W^{1,2}(\Tt^d)} + 
\Vert (m^\epsi)^{\frac{\alpha+1}{2}} \Vert_{W^{1,2}(\Tt^d)}
+\Vert \beta_{\epsi_1} (m^\epsi)\Vert_{L^1(\Tt^d)}+
\Vert( m^\epsi)^{\frac{1}{2}} Du^\epsi
\Vert_{L^2(\Tt^d;\mathbb{R}^{d})}  \\
&+ 
\Vert( m^\epsi)^{\frac{1}{2}} D^2 u^\epsi
\Vert_{L^2(\Tt^d;\mathbb{M}_{d\times d})}+
\Vert (m^{\epsi})^{\frac{\alpha+1}{2}}
Du^{\epsi}
\Vert_{W^{1,1}(\Tt^d;\RR^d)} +\Vert\sqrt{\epsi_1} u^{\epsi} 
\Vert_{W^{2p+1,2}(\Tt^{d})}  \\
& + \Vert\sqrt{\epsi_1}
m^{\epsi}\Vert_{W^{2p+1,2}(\Tt^{d})}
+\Vert\sqrt{\epsi_2} u^{\epsi}\Vert_{W^{2,2}(\Tt^{d})} + \Vert\sqrt{\epsi_2}
m^{\epsi}\Vert_{W^{2,2}(\Tt^{d})} 
\leq C.
\end{aligned}
\end{eqnarray*}}%
\end{lemma}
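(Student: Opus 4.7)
The plan is to build on the basic energy inequality of Lemma~\ref{Lem:ape}, specialized to $\mu=0$, $\gamma=2$, $\tau=0$, and to augment it by two further levels of testing that exploit the quadratic structure of $H$ together with the structural Assumptions \eqref{V:H1-H3}--\eqref{V:Fatou}. As an immediate first step, using that on $\Tt^d$ the norm $\|\Delta^p v\|_{L^2} + \|v\|_{L^2}$ is equivalent to the $W^{2p,2}$ norm, Lemma~\ref{Lem:ape} yields, uniformly in $\epsi$, bounds for $\|(m^\epsi)^{1/2}Du^\epsi\|_{L^2}$, $\|Du^\epsi\|_{L^2}$, $\|\beta_{\epsi_1}(m^\epsi)\|_{L^1}$, $\sqrt{\epsi_1}\|u^\epsi\|_{W^{2p,2}}$, $\sqrt{\epsi_1}\|m^\epsi\|_{W^{2p,2}}$, $\sqrt{\epsi_2}\|u^\epsi\|_{W^{1,2}}$, and $\sqrt{\epsi_2}\|m^\epsi\|_{W^{1,2}}$. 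Combined with the mean bound $|\intt u^\epsi\,\dx| \leq C$ from Proposition~\ref{punifbd} and the Poincar\'e--Wirtinger inequality, this already gives $\|u^\epsi\|_{W^{1,2}} \leq C$; combined with estimate \eqref{Est4} of Proposition~\ref{punifbd} and Assumption~\eqref{V:m^a+1}, it also gives the $L^{\alpha+1}$ half of $\|(m^\epsi)^{(\alpha+1)/2}\|_{W^{1,2}}$.

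For the gradient half, i.e.\ the control of $\intt (m^\epsi)^{\alpha-1}|Dm^\epsi|^2\,\dx$ (which is equivalent to $\|D((m^\epsi)^{(\alpha+1)/2})\|_{L^2}^2$), I would differentiate the first equation of \eqref{RP} in $x_k$, multiply by $m^\epsi_{x_k}$, sum over $k$, and integrate over $\Tt^d$. Integration by parts transfers the quadratic contribution $Du^\epsi\cdot Du^\epsi_{x_k}$ into $\div(m^\epsi Du^\epsi)$-type expressions that the FP equation controls, while the self-adjoint penalization terms assemble into nonnegative quantities that can be dropped. The key positive contribution
\begin{equation*}
\intt \big(V(x,m^\epsi,h(\bfm^\epsi))\big)_{x_k} m^\epsi_{x_k}\,\dx
\end{equation*}
is bounded below by Assumption~\eqref{V:Dm^a+1} by $\kappa_1^{-1}\intt (m^\epsi)^{\alpha-1}|Dm^\epsi|^2\,\dx - \kappa_1$, and the remaining terms are absorbed using the first-paragraph bounds and Young's inequality.

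The second-order bound $\intt m^\epsi |D^2u^\epsi|^2\,\dx \leq C$ would follow a Bochner--Bernstein scheme: differentiate the HJB in $x_k$, multiply by $m^\epsi u^\epsi_{x_k}$, sum, and integrate. The coercive term $\sigma^2\intt m^\epsi |D^2u^\epsi|^2\,\dx$ emerges from $\sigma^2\intt m^\epsi u^\epsi_{x_k}\Delta u^\epsi_{x_k}\,\dx$ after one integration by parts, together with a cross term $\sigma^2\intt Dm^\epsi\cdot(D^2u^\epsi)Du^\epsi\,\dx$ absorbed via Young's inequality using the $(m^\epsi)^{(\alpha+1)/2}$ Sobolev estimate from the previous paragraph; the quadratic contribution becomes $\tfrac{1}{2}\intt \div(m^\epsi Du^\epsi)|Du^\epsi|^2\,\dx$, handled via the FP equation and the first-paragraph bounds. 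Once these are in place, the $W^{1,1}$ estimate on $(m^\epsi)^{(\alpha+1)/2}Du^\epsi$ is a direct H\"older computation combining $\|m^\epsi\|_{L^{\alpha+1}}$, $\|Du^\epsi\|_{L^2}$, the Sobolev bound on $(m^\epsi)^{(\alpha+1)/2}$, and the second-order bound just obtained.

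Finally, upgrading the Lemma~\ref{Lem:ape} outputs from $\sqrt{\epsi_1}\|\cdot\|_{W^{2p,2}}$ and $\sqrt{\epsi_2}\|\cdot\|_{W^{1,2}}$ to the claimed $\sqrt{\epsi_1}\|\cdot\|_{W^{2p+1,2}}$ and $\sqrt{\epsi_2}\|\cdot\|_{W^{2,2}}$ respectively requires one extra derivative. The plan is to differentiate both equations of \eqref{RP} in $x_k$, test the differentiated HJB against $u^\epsi_{x_k}$ and the differentiated FP against $m^\epsi_{x_k}$, and add over $k$. Since the penalization operators $\epsi_1(I+(-\Delta)^{2p})$ and $\epsi_2(I-\Delta)$ are self-adjoint and nonnegative, this testing extracts exactly the coercive quantities $\epsi_1\|D\Delta^p u^\epsi\|_{L^2}^2$, $\epsi_1\|D\Delta^p m^\epsi\|_{L^2}^2$, $\epsi_2\|D^2u^\epsi\|_{L^2}^2$, and $\epsi_2\|D^2m^\epsi\|_{L^2}^2$, equivalent to the desired weighted norms modulo lower-order terms already controlled. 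The residual non-coercive terms are absorbed using the previous estimates together with the monotonicity of $V$ in $m$ built into Assumption~\eqref{V:H1-H3}. The main obstacle throughout will be the careful bookkeeping of the cross-couplings between the two equations and of the nonlocal contribution $h(\bfm^\epsi)$; the Fr\'echet differentiability~\eqref{h} (and, where convenient, Assumption~\eqref{hh}) provides the Lipschitz-type control needed to bound these terms, while the quadratic term $|Du^\epsi|^2/2$ in the HJB is handled throughout via the identity $Du^\epsi\cdot Du^\epsi_{x_k} = \partial_{x_k}(|Du^\epsi|^2/2)$ paired against the divergence structure furnished by the FP equation.
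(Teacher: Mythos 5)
Your first paragraph and the treatment of the $W^{1,1}$ estimate are correct. Your second paragraph is the right key idea, and it is in fact algebraically equivalent (after one integration by parts in $x_k$) to the paper's computation: differentiate the HJB equation twice in $x_j$, multiply by $m^\epsi$, integrate, substitute the Fokker--Planck equation for $\div(m^\epsi Du^\epsi)$, and sum over $j$. The point you are missing, however, is that this \emph{single} computation already produces \emph{all} of the remaining bounds simultaneously, so your third and fourth paragraphs are both unnecessary and, as written, flawed.

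Carry the second-paragraph computation through carefully. The quadratic contribution $-\sum_k\int (|Du^\epsi|^2/2)_{x_k}\,m^\epsi_{x_k}\,\dx$, integrated by parts in $x_k$, equals $\int |D^2u^\epsi|^2 m^\epsi\,\dx+\int Du^\epsi\cdot D(\Delta u^\epsi)\,m^\epsi\,\dx$, and the latter piece becomes $-\int \Delta u^\epsi\,\div(m^\epsi Du^\epsi)\,\dx$ after one more integration by parts. Substituting the FP equation for $\div(m^\epsi Du^\epsi)$ then makes the term $-\int Du^\epsi\cdot Dm^\epsi\,\dx$ (from $-u^\epsi$) and the term $-\sigma^2\int\Delta u^\epsi\Delta m^\epsi\,\dx$ (from $\sigma^2\Delta u^\epsi$) cancel identically, while the $\epsi_1$- and $\epsi_2$-terms from FP contribute $\epsi_1\int|D\Delta^pu^\epsi|^2+\epsi_2\int|D^2u^\epsi|^2+\ldots$ with the good sign. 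Together with the self-adjoint penalization terms in the differentiated HJB tested against $m^\epsi_{x_k}$ (which give $\epsi_1\int|D\Delta^pm^\epsi|^2+\epsi_2\int|D^2m^\epsi|^2+\ld$, and which you should therefore \emph{not} drop), and with Assumption~\eqref{V:Dm^a+1} applied to $\int DV\cdot Dm^\epsi$, one arrives in one stroke at the bound on $\int m^\epsi|D^2u^\epsi|^2$, on $\int (m^\epsi)^{\alpha-1}|Dm^\epsi|^2$, and on all four $\sqrt{\epsi_i}$-weighted one-extra-derivative norms. There is nothing left to ``absorb by Young's inequality''; in fact a term such as $\int Du^\epsi\cdot Dm^\epsi\,\dx$ is \emph{not} absorbable by the available quantities if you do not perform the FP substitution, so the equality structure, not absorption, is essential here.

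Your third paragraph contains a genuine gap. You claim the coercive term is $\sigma^2\intt m^\epsi|D^2u^\epsi|^2\,\dx$, coming from $\sigma^2\intt m^\epsi u^\epsi_{x_k}\Delta u^\epsi_{x_k}\,\dx$. But Assumption~\eqref{CC} only requires $\sigma\in\RR$, so $\sigma=0$ is allowed (this is precisely the case treated in Corollary~\ref{Cor:quad}). For $\sigma=0$ your proposed test degenerates and yields no control on $D^2u^\epsi$. The correct and $\sigma$-independent source of the $\int m^\epsi|D^2u^\epsi|^2$ bound is the quadratic Hamiltonian term, exactly as it emerges in the computation above.

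Your fourth paragraph has the pairing reversed. The HJB penalization is $\epsi_1(m^\epsi+\Delta^{2p}m^\epsi)+\epsi_2(m^\epsi-\Delta m^\epsi)$, so to extract a sign from it you must test the differentiated HJB against $m^\epsi_{x_k}$ (giving $\epsi_1\int|D\Delta^p m^\epsi|^2+\epsi_2\int|D^2m^\epsi|^2$); likewise the FP penalization is in $u^\epsi$ and must be tested against $u^\epsi_{x_k}$. Testing differentiated HJB against $u^\epsi_{x_k}$ and differentiated FP against $m^\epsi_{x_k}$, as you write, only produces cross terms of the form $\epsi_1\int\Delta^pDm^\epsi\cdot\Delta^pDu^\epsi\,\dx$, which have no fixed sign and are not the coercive quantities you need. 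In any case, once you keep the penalization terms in your second-paragraph computation and substitute FP, this fourth computation is superfluous.
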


\begin{proof}
Here, to simplify the notation, we write \(\bfm\),
$m$, and $u$ in place of \(\bfm^\epsi\), $m^\epsi$, and $u^\epsi$,
respectively.
Moreover, $C$  represents a positive constant independent of $\epsi$
 and whose value
may change from one  line to another.
Also, we do not use the Einstein convention
on repeated indices, and we specify all the sums.

Since $(m,u)$ is a solution to \eqref{RP} and \(H\) is given by \eqref{Hquad}, we have
{\setlength\arraycolsep{0.5pt}%
\begin{eqnarray}
&&\displaystyle -u-
\frac{|Du|^2}{2}
+  V(x, m , h(\bfm))  + \sigma^2 \Delta u+ \epsi_1(m
+\Delta^{2p}m) + \beta_{\epsi_1}(m)+
\epsi_2(m -\Delta m)=0,\qquad\label{eqnme0}\\
&&\displaystyle m - \div(m Du) - \sigma^2 \Delta
m-1 + \epsi_1(u
+\Delta^{2p}u) + \epsi_2(u -\Delta u)= 0, \label{eqnue0}
\end{eqnarray}}%
pointwise in \(\Tt^d\).

Next, we recall that under Assumption~\eqref{V:H1-H3}, the Hamiltonian $H$ in \eqref{Hquad} satisfies Assumption
\eqref{bddH} with \(\gamma=2\),  $\tau=0$, and $g$
given by \eqref{V:H1-H3}. Because \(g\)
also satisfies \eqref{g},
 Proposition~\ref{punifbd} 
 and Lemma~\ref{Lem:ape}  with \(\gamma=2\),  $\tau=0$, and \(\mu=0 \) give the bounds
\begin{equation}
\label{bdss000}
\Vert Du\Vert_{L^2(\Tt^d;\RR^d)} + \bigg| \intt
u\,\dx\bigg|
\leq C,
\end{equation}
\begin{equation}
\label{bddss002}
\intt m g_1(x, m , h(\bfm))\,\dx \leq C,
\end{equation}
and
\begin{equation*}
\Vert \beta_{\epsi_1} (m)\Vert_{L^1(\Tt^d)}+\Vert m^{\frac{1}{2}}
Du \Vert_{L^2(\Tt^d;\mathbb{R}^{d})}
 \leq C.
\end{equation*}
The estimate \eqref{bdss000} together with the Poincar\'e-Wirtinger
inequality  yields
{\setlength\arraycolsep{0.5pt}%
\begin{eqnarray}
 \Vert
u\Vert_{W^{1,2}(\Tt^d)} \leq C.
\label{bds02}
\end{eqnarray}}%
Combining  \eqref{bddss002}, \eqref{V:m^a+1},
and 
\eqref{G:g ub} with \(\delta=1\), 
we get
{\setlength\arraycolsep{0.5pt}%
\begin{eqnarray}
\label{ma+1inL2}
 \Vert m^{\frac{\alpha+1}{2}}
\Vert_{L^{2}(\Tt^d)}\leq C.
\end{eqnarray}}%
After differentiating \eqref{eqnme0} twice
with respect to \(x_j\), \(j\in\{1,...,d\}\),
multiplying  by $m$, and  integrating over \(\Tt^d\),
the resulting equality becomes
{\setlength\arraycolsep{0.5pt}%
\begin{eqnarray*}
\begin{aligned}
&-\int_{\Tt^d} u_{x_j x_j} m \,\dx - \int_{\Tt^{d}}
\sum_{i=1}^d |u_{x_i x_j}|^2 m \,\dx + \int_{\Tt^{d}}
 \sum_{i=1}^d (m u_{x_i})_{x_i}  u_{x_j x_j}\,\dx
\\&\quad - \int_{\Tt^d} (V(x,m,h(\bfm))_{x_j}m_{x_j} \,\dx\\
& \quad+ \sigma^2 \int_{\Tt^d} \Delta u\, m_{x_jx_j} \,\dx -\epsi_1 \int_{\Tt^d}  |m_{x_{j}}|^2
\,\dx - \epsi_1 \int_{\Tt^d}  |\Delta^pm_{x_{j}}|^2
\,\dx \\
&\quad- \int_{\Tt^d} \beta_{\epsi_1}'(m) |m_{x_{j}}|^2
\,\dx-\epsi_2 \int_{\Tt^d}  |m_{x_{j}}|^2
\,\dx - \epsi_2 \int_{\Tt^d}  |Dm_{x_{j}}|^2
\,\dx =0
\end{aligned}
\end{eqnarray*}}%
using integration by parts. In
view of \eqref{eqnue0}, we have that
{\setlength\arraycolsep{0.5pt}%
\begin{eqnarray*}
\begin{aligned}
\sum_{i=1}^d (m u_{x_i})_{x_i}  = m -\sigma^2 \Delta
m -1 +\epsi_1
u + \epsi_1 \Delta^{2p} u +\epsi_2u - \epsi_2
\Delta u.
\end{aligned}
\end{eqnarray*}}%
Using this identity in the preceding 
equality, summing over \(j\in\{1,...,d\},\)
and using \eqref{V:Dm^a+1},
we conclude that
{\setlength\arraycolsep{0.5pt}
\begin{eqnarray}
&&\int_{\Tt^{d}}
 |D^2u|^2 m \,\dx +\frac{1}{\kappa_1} \int_{\Tt^{d}}  m^{\alpha-1}
|Dm|^2\,\dx + \epsi_1 \int_{\Tt^d}  |Du|^2
\,\dx + \epsi_1 \int_{\Tt^d}  |\Delta^p Du|^2
\,\dx\nonumber \\
&&\quad+\epsi_1 \int_{\Tt^d}  |Dm|^2
\,\dx + \epsi_1 \int_{\Tt^d}  |\Delta^pDm|^2
\,\dx +\epsi_2 \int_{\Tt^d}  |Du|^2
\,\dx + \epsi_2 \int_{\Tt^d}  |D^2u|^2
\,\dx \nonumber\\ 
&&\quad  +\epsi_2 \int_{\Tt^d}  |Dm|^2
\,\dx + \epsi_2 \int_{\Tt^d}  |D^2m|^2
\,\dx +\int_{\Tt^d} \beta_{\epsi_1}'(m) |Dm|^2
\,\dx \leq {\kappa_1}. \label{bds03}
\end{eqnarray}}%

Because \(|D(m^{\frac{\alpha+1}{2}})|^2
=\frac{(\alpha+1)^2}{4} m^{\alpha-1}
|Dm|^2\) and because all terms
on the left-hand side of the inequality in \eqref{bds03}
are non-negative, it follows  that
{\setlength\arraycolsep{0.5pt}%
\begin{eqnarray}
\begin{aligned}
\Vert m^{\frac{1}{2}} D^2 u
\Vert_{L^2(\Tt^d;\mathbb{M}_{d\times d})} +
 \Vert m^{\frac{\alpha+1}{2}} 
 \Vert_{W^{1,2}(\Tt^d)}
 \leq C,
\label{bds04}
\end{aligned}
\end{eqnarray}}%
where we also take into account \eqref{ma+1inL2}.
Moreover, invoking the Poincar\'e-Wirtinger inequality
once more and using  integration
by parts, we get
{\setlength\arraycolsep{0.5pt}%
\begin{eqnarray*}
&&\Vert\sqrt{\epsi_1} u
\Vert_{W^{2p+1,2}(\Tt^{d})} + \Vert\sqrt{\epsi_1}
m\Vert_{W^{2p+1,2}(\Tt^{d})} + \Vert\sqrt{\epsi_2} u 
\Vert_{W^{2,2}(\Tt^{d})} + \Vert\sqrt{\epsi_2}
m\Vert_{W^{2,2}(\Tt^{d})}  \leq C.
\end{eqnarray*}}%

Finally, we observe that 
{\setlength\arraycolsep{0.5pt}%
\begin{eqnarray*}
\Vert
m^{\frac{\alpha+1}{2}}
Du
\Vert_{W^{1,1}(\Tt^d;\RR^d)}\leq C
\end{eqnarray*}}%
is a consequence of \eqref{bds02} and \eqref{bds04}.
\end{proof}%

The next result refines Theorem~\ref{Thm:main} and improves the result in Proposition \ref{wfp}. 
\begin{theorem}\label{Thm:quad}
Suppose that Assumptions~\eqref{h}, \eqref{hh}, \eqref{g},
and \eqref{CC} hold.
 Then, there exists a weak solution, $(m,u)\in
\mathcal{D}'(\Tt^d) \times \mathcal{D}'(\Tt^d)$,
with $m\geq 0$   to \eqref{Pquad}. Moreover, \((m^{\frac{\alpha + 1}{2}},u)
\in W^{1,2}(\Tt^d) \times W^{1,2} (\Tt^d)
 \),  \(\int_{\Tt^d} m \,\dx = 1\), 
\begin{equation}
\label{72}
\displaystyle-u- \frac{|Du|^2}{2} + V(x, m, h(\bfm)) + \sigma^2 \Delta u
\geq 0
\end{equation}
 in the sense of distributions, and
\begin{equation}
\label{72'}
m - \div(m Du) - \sigma^2 \Delta m -1  =0
\end{equation}
 \aev\ in \(\Tt^d\).
\end{theorem}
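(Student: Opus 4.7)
The plan is to pass to the limit $\epsi \to 0^+$ in the regularized problem \eqref{RP} produced by Proposition \ref{RPSol}, exploiting the refined a priori bounds of Lemma \ref{Lem:moreape}. Under Assumptions \eqref{h}, \eqref{hh}, \eqref{g}, and \eqref{CC}, the Hamiltonian \eqref{Hquad} satisfies \eqref{bddH}--\eqref{H:mon} with $\gamma=2$ and $\tau=0$; moreover, the lower bound on $g_1$ from \eqref{V:extra} combined with the integral control \eqref{G:g ub} forces equi-integrability in $L^1(\Tt^d)$ of any family $(m_j)_{j}$ with $\sup_j \intt m_j g_1(x,m_j,h(\bfm_j))\,\dx < \infty$, so Assumption \eqref{G:g ei} holds automatically. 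Hence Theorem \ref{Thm:main} and Corollary \ref{Cor:limitws} already deliver a weak solution $(\bfm,u) \in \Mm_{ac}(\Tt^d) \times W^{1,2}(\Tt^d)$ with $\intt m\,\dx = 1$, obtained as a weak sublimit of classical solutions $(m^\epsi, u^\epsi)$ of \eqref{RP}. What remains is the improved regularity $m^{(\alpha+1)/2} \in W^{1,2}(\Tt^d)$, the subsolution inequality \eqref{72}, and the pointwise a.e. validity of \eqref{72'}.

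For the compactness, Lemma \ref{Lem:moreape} yields an $\epsi$-uniform bound on $\|u^\epsi\|_{W^{1,2}(\Tt^d)} + \|(m^\epsi)^{(\alpha+1)/2}\|_{W^{1,2}(\Tt^d)}$, so by Rellich-Kondrachov (up to a not relabeled subsequence) $u^\epsi \rightharpoonup u$ and $(m^\epsi)^{(\alpha+1)/2} \rightharpoonup m^{(\alpha+1)/2}$ weakly in $W^{1,2}(\Tt^d)$ and strongly in $L^2(\Tt^d)$; in particular $m^\epsi \to m$ strongly in $L^{\alpha+1}(\Tt^d)$ and a.e. Assumption \eqref{hh} gives $h(\bfm^\epsi) \to h(\bfm)$ in $L^1(\Tt^d)$, and combined with the domination \eqref{V:Fatou} and Vitali's theorem upgrades this to $V(x,m^\epsi,h(\bfm^\epsi)) \to V(x,m,h(\bfm))$ strongly in $L^1(\Tt^d)$. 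The bound $m^{(\alpha+1)/2} \in W^{1,2}(\Tt^d)$ follows from weak lower semicontinuity of the norm.

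For \eqref{72}, I multiply the first equation of \eqref{RP} by $\varphi \in C^\infty(\Tt^d)$ with $\varphi \geq 0$, integrate over $\Tt^d$, shift the Laplacian onto $\varphi$ by integration by parts, and use $-\beta_{\epsi_1}(m^\epsi) \geq 0$ together with the vanishing of the $\epsi_i$-contributions (guaranteed by Lemma \ref{Lem:moreape}); all terms except $\intt |Du^\epsi|^2 \varphi\,\dx$ converge by Step 1, and the latter is controlled by the sequential weak lower semicontinuity of $v \mapsto \intt |Dv|^2 \varphi\,\dx$ on $W^{1,2}(\Tt^d)$, giving precisely \eqref{72}.

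To obtain \eqref{72'} a.e., I first pass the second equation of \eqref{RP} to the distributional limit: the only nonlinear term is $\div(m^\epsi Du^\epsi)$, and writing $m^\epsi Du^\epsi = (m^\epsi)^{1/2} \cdot ((m^\epsi)^{1/2} Du^\epsi)$, the strong convergence of $(m^\epsi)^{1/2}$ (from the strong $L^{\alpha+1}$-convergence of $m^\epsi$) combined with the weak-$L^2$ convergence of $(m^\epsi)^{1/2} Du^\epsi$ (from Lemma \ref{Lem:moreape}) identifies the limit, yielding \eqref{72'} in $\mathcal{D}'(\Tt^d)$. To upgrade to a pointwise a.e. identity, the weighted bounds $m^{1/2} D^2 u \in L^2$ and $m^{(\alpha-1)/2} Dm \in L^2$ persist in the limit; by Cauchy-Schwarz, $m\Delta u,\,Dm\cdot Du \in L^1(\Tt^d)$, so $\div(mDu) \in L^1(\Tt^d)$. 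When $\sigma \neq 0$, standard $L^p$-elliptic regularity for $m - \sigma^2 \Delta m = 1 + \div(mDu)$ produces $\Delta m$ as a genuine $L^1$-function and forces the distributional identity to hold a.e.; when $\sigma = 0$ the equation is first-order and \eqref{72'} holds a.e. directly. The main obstacle is precisely this last upgrade: controlling $Dm \cdot Du$ and $\Delta m$ as honest functions rather than mere distributions in the regime $\alpha > 1$, where the factor $m^{(1-\alpha)/2}$ in the decomposition $Dm = \tfrac{2}{\alpha+1}\, m^{(1-\alpha)/2} D(m^{(\alpha+1)/2})$ blows up on $\{m=0\}$; an interpolation exploiting the additional weight $m^{1/2} Du \in L^2$ from Lemma \ref{Lem:moreape} is what makes this feasible.
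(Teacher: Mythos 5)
Your overall strategy (pass to the limit in \eqref{RP} using the refined bounds of Lemma~\ref{Lem:moreape}, Rellich--Kondrachov compactness, and convexity/Fatou arguments for \eqref{72}) matches the paper's, and the reduction of the existence part to Theorem~\ref{Thm:main} is sound. The treatment of \eqref{72} is essentially the paper's Step~4, modulo the choice of Vitali over Fatou; note that Assumption~\eqref{V:Fatou} only gives a one-sided bound on $V$, so Fatou applied to $\kappa_1(1+(m^\epsi)^\alpha+|h(\bfm^\epsi)|)-V(x,m^\epsi,h(\bfm^\epsi))\geq0$ is the cleaner route, whereas Vitali needs a two-sided dominating family (recoverable from \eqref{V:lb} plus \eqref{V:m^a+1}, but you did not justify it).

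The genuine gap is in your route to \eqref{72'}. You try to upgrade the distributional Fokker--Planck equation to an a.e.\ identity by showing that $\div(mDu)$ and $\Delta m$ are \emph{individually} $L^1$-functions and then invoking elliptic regularity. This is not what Lemma~\ref{Lem:moreape} gives: the available bounds are $m^{1/2}D^2u\in L^2$, $m^{1/2}Du\in L^2$, and $D(m^{(\alpha+1)/2})\in L^2$, and your decomposition $Dm=\tfrac{2}{\alpha+1}\,m^{(1-\alpha)/2}D(m^{(\alpha+1)/2})$ produces, by Cauchy--Schwarz, only $m^{\alpha/2}\,Dm\cdot Du\in L^1$ (pairing $D(m^{(\alpha+1)/2})$ with $m^{1/2}Du$), not $Dm\cdot Du\in L^1$. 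For $\alpha>1$ the deficient factor $m^{(1-\alpha)/2}$ is singular on $\{m=0\}$ and no interpolation is offered to bridge this gap — you yourself flag it as ``the main obstacle'' but leave it unresolved, so the proof is incomplete there. The paper avoids this entirely (Step~3 of its proof of Theorem~\ref{Thm:quad}): multiplying the second equation of \eqref{RP} by $\varphi\in C^\infty(\Tt^d)$, integrating by parts, and passing to the limit gives $\int_{\Tt^d}(m-1)\varphi\,\dx=\langle \div(mDu)+\sigma^2\Delta m,\varphi\rangle_{\mathcal{D}',C^\infty}$. Since $m^{(\alpha+1)/2}\in W^{1,2}(\Tt^d)\hookrightarrow L^{2^*}(\Tt^d)$ forces $m-1\in L^{(\alpha+1)2^*/2}(\Tt^d)$, the distribution $\div(mDu)+\sigma^2\Delta m$ \emph{is} the $L^p$-function $m-1$, and \eqref{72'} holds a.e.\ with no need to show $\Delta m$ or $\div(mDu)$ separately are functions. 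The same observation handles your $\sigma=0$ case, which likewise is not ``a.e.\ directly'' without this identification. Also note the paper takes a two-stage limit ($\epsi_1\to0^+$ for fixed $\epsi_2$, then $\epsi_2\to0^+$) to exploit the $W^{2,2}$-regularity furnished by the $\epsi_2$-term; for the convergences you actually use a single-stage limit may suffice, but the two-stage structure is what delivers the additional pointwise information used later in Corollary~\ref{Cor:quad}.
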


\begin{proof}
By Corollary~\ref{Cor:L=[0,1]},
for each $\epsi=(\epsi_1, \epsi_2) \in (0,1)^2$, there exists  $(m^\epsi, u^\epsi) \in
C^\infty(\Tt^d)\times C^\infty(\Tt^d)$ with
$\inf_{\Tt^d}m^\epsi>0$
 satisfying \eqref{eqnme0}--\eqref{eqnue0}.
 
We assume that \(\epsi_1\) and \(\epsi_2\) take values on fixed sequences of positive numbers converging to zero. We proceed with the proof in four steps.

\textit{Step 1.} Here, we establish
preliminary
convergences of \((m^\epsi, u^\epsi)\) first as \(\epsi_1
\to 0^+,\) and then as \(\epsi_2 \to 0^+\).

We recall  that the embeddings 
\(W^{1,2}(\Tt^d) \hookrightarrow L^q(\Tt^d)\) and \(W^{2,2}(\Tt^d) \hookrightarrow L^r(\Tt^d)\)  are compact for
all \(q\in [1,2^*) \) and \(r\in [1,(2^*)^*) \) and  continuous for
all \(q\in [1,2^*] \) and \(r\in [1,(2^*)^*]
\). 
Hence, the uniform bounds
in  Lemma~\ref{Lem:moreape} with respect to \(\epsi_1\) give  that, for each \(\epsi_2\),
there exist \(u^{\epsi_2}\in W^{2,2}(\Tt^d) \)
and \(m^{\epsi_2} \in W^{2,2}(\Tt^d)\)
 with \((m^{\epsi_2})^{\frac{\alpha +1}{2}}\in W^{1,2}(\Tt^d)
\) such that, up to a not relabeled subsequence  of \((m^\epsi, u^\epsi)_{\epsi_1}\),
{\setlength\arraycolsep{0.5pt}
\begin{eqnarray}
&& u^\epsi \weakly u^{\epsi_2} \hbox{ weakly in } W^{2,2}(\Tt^d) \hbox{ as } \epsi_1 \to 0^+, \label{ue wrt e1}\\
&& m^\epsi \weakly m^{\epsi_2} \hbox{ weakly
in } W^{2,2}(\Tt^d) \hbox{ as } \epsi_1 \to
0^+, \label{me wrt e1}\\
&& (m^\epsi)^{\frac{\alpha +1}{2}} \weakly (m^{\epsi_2})^{\frac{\alpha +1}{2}} \hbox{ weakly
in } W^{1,2}(\Tt^d) \hbox{ as } \epsi_1 \to
0^+, \label{mea wrt e1}\\
&& \sqrt{\epsi_1} u^\epsi \to 0 \hbox{ in } W^{2p,2}(\Tt^d) \hbox{ as } \epsi_1 \to
0^+, \label{sue wrt e1}\\
&&  \sqrt{\epsi_1} m^\epsi
\to 0 \hbox{ in } W^{2p,2}(\Tt^d) \hbox{ as } \epsi_1 \to
0^+, \label{sme wrt e1}\\
&& (m^\epsi)^{\frac{1}{2}} Du^\epsi \weakly
(m^{\epsi_2})^{\frac{1}{2}} Du^{\epsi_2} \hbox{
weakly
in } L^2(\Tt^d;\RR^d ) \hbox{ as } \epsi_1 \to
0^+, \label{smeDue wrt e1}\\
&& (m^\epsi)^{\frac{1}{2}} D^2u^\epsi \weakly
(m^{\epsi_2})^{\frac{1}{2}} D^2u^{\epsi_2} \hbox{
weakly
in } L^2(\Tt^d;\mathbb{M}_{d\times d}) \hbox{ as } \epsi_1 \to
0^+, \label{smeD2ue wrt e1}\\
&& (m^\epsi)^{\frac{\alpha +1}{2}} Du^\epsi \weakly
(m^{\epsi_2})^{\frac{\alpha +1}{2}} Du^{\epsi_2} \hbox{
weakly
in } W^{1,1}(\Tt^d;\RR^d ) \hbox{ as } \epsi_1 \to
0^+, \label{meaDue wrt e1}\\
&& h(\bfm^\epsi) \to h(\bfm^{\epsi_2}) \hbox{ in }
L^1(\Tt^d) \hbox{ as } \epsi_1 \to
0^+, \label{hme wrt e1}
\end{eqnarray}}%
where we also used the Rellich--Kondrachov theorem,  the uniqueness of weak limits, and Assumption~\eqref{hh}. Observe that
\eqref{meaDue wrt e1} holds weakly in \(W^{1,1}(\Tt^d;\RR^d )\), and not only weakly-$\star$ in \(BV(\Tt^d;\RR^d
)\), by \eqref{ue wrt e1}, \eqref{mea wrt e1},
and the Rellich--Kondrachov theorem. Moreover, in view of 
\eqref{ue wrt e1}, \eqref{me wrt e1}, and \eqref{hme wrt e1}, we can
assume that the subsequence also satisfies,  for \aev\ \(x\in \Tt^d\),
{\setlength\arraycolsep{0.5pt}
\begin{eqnarray}
&& Du^{\epsi} (x) \to Du^{\epsi_2}(x), \enspace
  m^{\epsi}
(x) \to m^{\epsi_2}(x), \hbox{ and }  h(\bfm^{\epsi})
(x) \to h(\bfm^{\epsi_2})(x) \hbox{ as } \epsi_1
\to 0^+.  \qquad\label{pointwise e1}
\end{eqnarray}}%
Note also that by \eqref{me wrt e1}, 
\((m^\epsi)^{\frac{1}{2}} \to 
(m^{\epsi_2})^{\frac{1}{2}}\) in \(L^2(\Tt^d)\) as \(\epsi_1 \to 0^+\), which, together with \eqref{smeDue wrt e1} and \eqref{smeD2ue wrt e1}, yields
{\setlength\arraycolsep{0.5pt}
\begin{eqnarray}
&& m^\epsi Du^\epsi \weakly m^{\epsi_2} 
Du^{\epsi_2} \hbox{ weakly
in } L^1(\Tt^d;\RR^d ) \hbox{ as } \epsi_1
\to
0^+, \label{meDue wrt e1}\\
&& {m^\epsi D^2u^\epsi \weakly m^{\epsi_2} 
D^2u^{\epsi_2} \hbox{ weakly
in } L^1(\Tt^d;\RR^d ) \hbox{ as } \epsi_1
\to
0^+.} \nonumber
\end{eqnarray}}%

Additionally, using the sequential lower semicontinuity of the norm in Lebesgue and Sobolev spaces with respect to the corresponding weak convergence, it follows from Lemma~\ref{Lem:moreape} and \eqref{ue wrt e1}--\eqref{meaDue wrt e1}  that there is a positive constant, \(C\), independent of \(\epsi_2\) such that
{\setlength\arraycolsep{0.5pt}%
\begin{eqnarray*}
\begin{aligned}
&\Vert
u^{\epsi_2}\Vert_{W^{1,2}(\Tt^d)} + 
\Vert (m^{\epsi_2})^{\frac{\alpha+1}{2}} 
\Vert_{W^{1,2}(\Tt^d)} +
\Vert( m^{\epsi_2})^{\frac{1}{2}} Du^{\epsi_2}
\Vert_{L^2(\Tt^d;\mathbb{R}^{d})}  
+ \Vert( m^{\epsi_2})^{\frac{1}{2}} D^2 u^{\epsi_2}
\Vert_{L^2(\Tt^d;\mathbb{M}_{d\times d})}\\
&\quad+ 
\Vert (m^{{\epsi_2}})^{\frac{\alpha+1}{2}}
Du^{{\epsi_2}}
\Vert_{W^{1,1}(\Tt^d;\RR^d)}   
+\Vert\sqrt{\epsi_2} u^{\epsi_2}\Vert_{W^{2,2}(\Tt^{d})}
+ \Vert\sqrt{\epsi_2}
m^{\epsi_2}\Vert_{W^{2,2}(\Tt^{d})} 
\leq C.
\end{aligned}
\end{eqnarray*}}%
Thus, arguing as above,  there exist \(u\in W^{1,2}(\Tt^d) \cap L^{2^*}(\Tt^d)\)
and \(m \in L^{\frac{\alpha + 1}{2}2^*}(\Tt^d)\)
 with \(m^{\frac{\alpha +1}{2}}\in W^{1,2}(\Tt^d)
\) such that, up to a not relabeled subsequence of \((m^{\epsi_2}, u^{\epsi_2})_{\epsi_2}\),
{\setlength\arraycolsep{0.5pt}
\begin{eqnarray}
&& u^{\epsi_2} \weakly u \hbox{ weakly
in } W^{1,2}(\Tt^d) \hbox{ as } \epsi_2 \to
0^+, \label{ue wrt e2}\\
&& (m^{\epsi_2})^{\frac{\alpha +1}{2}} \weakly
m^{\frac{\alpha +1}{2}} \hbox{
weakly
in } W^{1,2}(\Tt^d) \hbox{ as } \epsi_2 \to
0^+, \label{mea wrt e2}\\
&& (m^{\epsi_2})^{\frac{1}{2}} Du^{\epsi_2} \weakly
m^{\frac{1}{2}} Du \hbox{
weakly
in } L^2(\Tt^d;\RR^d ) \hbox{ as } \epsi_2
\to
0^+, \nonumber
\\
&& (m^{\epsi_2})^{\frac{\alpha +1}{2}} Du^{\epsi_2}
\weaklystar
m^{\frac{\alpha +1}{2}} Du \hbox{
weakly-$\star$
in } BV(\Tt^d;\RR^d ) \hbox{ as } \epsi_2 \to
0^+, \label{meaDue wrt e2} \\
&&  m^{\epsi_2}  Du^{\epsi_2}  \weakly m 
Du  \hbox{ weakly in } L^1(\Tt^d;\RR^d ) \hbox{
as } \epsi_2 \to 0^+, \label{meDue wrt e2'}\\
&& \sqrt{\epsi_2} u^{\epsi_2} \to 0 \hbox{ in }
W^{1,2}(\Tt^d) \hbox{ as } \epsi_2 \to
0^+, \label{sue wrt e2}\\
&&  \sqrt{\epsi_2} m^{\epsi_2}
\to 0 \hbox{ in } W^{1,2}(\Tt^d) \hbox{ as
} \epsi_2 \to
0^+, \label{sme wrt e2}\\
&&  \epsi_2 \Delta m^{\epsi_2}
\to 0 \hbox{ in } L^1(\Tt^d) \hbox{ as
} \epsi_2 \to
0^+, \label{D2me wrt e2} \\
&& h(\bfm^{\epsi_2}) \to h(\bfm) \hbox{
in }
L^1(\Tt^d) \hbox{ as } \epsi_2 \to
0^+. \label{hme wrt e2}
\end{eqnarray}}%

Moreover, using in addition the compact
embedding
\(BV(\Tt^d) \hookrightarrow L^{s}(\Tt^d)\)
for all  \(s\in [1, 1^{*})\), it follows from
\eqref{ue wrt e2}, \eqref{mea wrt e2}, \eqref{meaDue
wrt e2},  \eqref{D2me
wrt e2}, and \eqref{hme wrt e2} that, up to a further not relabeled subsequence of \((m^{\epsi_2}, u^{\epsi_2})_{\epsi_2}\), we have for \aev\ \(x\in \Tt^d\),
{\setlength\arraycolsep{0.5pt}
\begin{eqnarray}\label{pointwise e2}
\begin{aligned}
& u^{\epsi_2} (x) \to u(x), \quad m^{\epsi_2} (x) \to m(x), \quad
  (m^{\epsi_2})^{\frac{\alpha +1}{2}} (x) Du^{\epsi_2}
(x) \to m^{\frac{\alpha +1}{2}}(x) Du(x),   \\
& \epsi_2
\Delta m^{\epsi_2}(x) \to 0, \quad
h(\bfm^{\epsi_2})(x) \to h(\bfm)(x) \hbox{
as } \epsi_2 \to 0^+.
\end{aligned}
\end{eqnarray}}%

Finally, because \(\inf_{\Tt^d} m^\epsi >0\) and
\((m^\epsi)_{\epsi_1}\) converges \aev\ in \(\Tt^d\)
to \(m^{\epsi_2}\) as \(\epsi_1 \to 0^+\), we
conclude that \(m^{\epsi_2} \geq 0\) \aev\ in \(\Tt^d\). In turn, the latter condition together with the
second convergence in \eqref{pointwise e2} yields
\(m \geq 0\) \aev\ in \(\Tt^d\). This concludes
Step~1. 
\medskip

\textit{Step 2.} We claim that the pair \((m,u)\) with \(m \geq 0\) \aev\ in \(\Tt^d\) determined in Step~1 is a weak solution to \eqref{Pquad}.

The argument is the same as the one in the proof of Theorem~\ref{Thm:main}.

\medskip

\textit{Step 3.} We prove that the pair \((m,u)\)
with \(m \geq 0\) \aev\ in \(\Tt^d\) determined
in Step~1 satisfies \(\int_{\Tt^d} m \,\dx =1\),  $\div(m Du) +\sigma^2\Delta m  \in
L^{\frac{\alpha
+ 1}{2}2^*}(\Tt^d)$, and \eqref{72'}.


Let $\varphi \in C^{\infty} (\Tt^d)$. Multiplying
   \eqref{eqnue0} by \(\varphi\),
 integrating the resulting equality over \(\Tt^d\),
and using integration by parts, we obtain
{\setlength\arraycolsep{0.5pt}%
\begin{eqnarray*}
\begin{aligned}
&\int_{\Tt^d}(m^\epsi -1) \varphi\,\dx   + \epsi_1
\int_{\Tt^d} u^\epsi\varphi\,\dx  + \epsi_1
\int_{\Tt^d} \Delta^{p}u^\epsi \Delta^p \varphi\,\dx+ \epsi_2
\int_{\Tt^d} u^\epsi\varphi\,\dx  - \epsi_2
\int_{\Tt^d} u^\epsi \Delta \varphi\,\dx
\\ 
& \qquad=-  \int_{\Tt^d} m^\epsi Du^\epsi \cdot D\varphi
\,\dx+\sigma^2
\int_{\Tt^d} m^\epsi \Delta \varphi\,\dx.
\end{aligned}
\end{eqnarray*}}%

Letting  \(\epsi_1\to 0^+\) first and using 
\eqref{me wrt e1}, \eqref{sue wrt e1}, \eqref{ue wrt e1}, and \eqref{meDue wrt e1}, and then letting  \(\epsi_2\to 0^+\) and invoking  \eqref{mea wrt e2}, \eqref{sue wrt e2}, and \eqref{meDue wrt e2'}, we conclude that
{\setlength\arraycolsep{0.5pt}%
\begin{eqnarray*}
\begin{aligned}
\int_{\Tt^d}(m -1) \varphi\,\dx  
&=- \int_{\Tt^d} m Du \cdot D\varphi
\,\dx+\sigma^2
\int_{\Tt^d} m\Delta \varphi\,\dx \\
&= \langle \div(m Du)+\sigma^2\Delta m,\varphi \rangle_{\mathcal{D}'
(\Tt^d), C^\infty(\Tt^d)}.
\end{aligned}
\end{eqnarray*}}%
Because $m-1  \in L^{\frac{\alpha + 1}{2}2^*}(\Tt^d)$
and  $\varphi \in C^{\infty} (\Tt^d)$ is arbitrary,
it follows that $\div(m Du) +\sigma^2\Delta m  \in L^{\frac{\alpha
+ 1}{2}2^*}(\Tt^d)$ and \eqref{72'} holds \aev\
in \(\Tt^d\).

Finally, taking $\varphi =1$ in the equality above yields $\int_{\Tt^d} m \,\dx
=1$.
This completes Step~3.
\medskip

\textit{Step 4.} We prove that the pair \((m,u)\)
with \(m \geq 0\) \aev\ in \(\Tt^d\) determined
in Step~1 satisfies \eqref{72}.

Let $\varphi \in C^{\infty} (\Tt^d)$ with $\varphi
\geq 0$. Multiplying
   \eqref{eqnme0} by \(\varphi\),
 integrating the resulting equality over \(\Tt^d\),
and using the integration by parts formula and the condition $\beta_{\epsi_1}(\cdot)\leq
0$, we get
{\setlength\arraycolsep{0.5pt}%
\begin{eqnarray}
\begin{aligned} \label{72 e2}
&\int_{\Tt^d} -u^\epsi
\varphi\,\dx + \intt V(x, m^\epsi, h(\bfm^\epsi)) \varphi\,\dx 
+ \sigma^{2} \int_{\Tt^d} u^\epsi
\Delta \varphi\, \dx + \epsi_1
\int_{\Tt^d} m^\epsi\varphi\,\dx  
\\ 
&\qquad+ \epsi_1
\int_{\Tt^d} \Delta^{p}m^\epsi \Delta^p \varphi\,\dx+ \epsi_2
\int_{\Tt^d} m^\epsi\varphi\,\dx
 - \epsi_2
\int_{\Tt^d} m^\epsi \Delta \varphi\,\dx
\\
&\qquad=
\int_{\Tt^d}\frac{|Du^\epsi|^2}{2} \varphi\,\dx
- \int_{\Tt^d} \beta_{\epsi_1}(m^\epsi) \varphi\,\dx
\geq 
\int_{\Tt^d}\frac{|Du^\epsi|^2}{2} \varphi\,\dx.
\end{aligned}
\end{eqnarray}}%
 First, we let  \(\epsi_1\to 0^+\) and then  \(\epsi_2\to 0^+\).
According to \eqref{ue wrt e1}, \eqref{me wrt e1}, \eqref{sme wrt e1}, \eqref{hme wrt e1},
\eqref{pointwise e1}, \eqref{ue wrt e2}, \eqref{mea
wrt e2}, \eqref{sme wrt e2}, \eqref{hme wrt e2}, \eqref{pointwise e2},    the sequential
lower semicontinuity of the  convex functional 
{\setlength\arraycolsep{0.5pt}%
\begin{eqnarray*}
\begin{aligned}
w\in L^2(\Tt^d;\RR^d) \mapsto 
\int_{\Tt^d}\frac{|w(x)|^2}{2}
\varphi(x)\,\dx \in \RR^+_0
\end{aligned}
\end{eqnarray*}}%
with respect to the weak convergence
in \(L^2(\Tt^d;\RR^d)\), and  the continuity of $V$ together with Fatou's lemma and Assumption~\eqref{V:Fatou}, we deduce that
{\setlength\arraycolsep{0.5pt}%
\begin{eqnarray*}
\begin{aligned}
&\int_{\Tt^d} -u
\varphi\,\dx + \intt V(x,m ,h(\bfm)) 
\varphi\,\dx + \sigma^{2} \int_{\Tt^d} u \Delta \varphi\, \dx \geq \int_{\Tt^d}
\frac{|Du|^2}{2}
\varphi\,\dx.
\end{aligned}
\end{eqnarray*}}%
Equivalently,
{\setlength\arraycolsep{0.5pt}%
\begin{eqnarray*}
\begin{aligned}
&\Big\langle-u - \frac{|Du|^2}{2}
+ V(x,m,h(\bfm)) + \sigma^2 \Delta
u,\varphi \Big \rangle_{\mathcal{D}'
(\Tt^d), C^\infty(\Tt^d)} \geq0,
\end{aligned}
\end{eqnarray*}}%
and so, \eqref{72}  holds since
$\varphi \in C^{\infty} (\Tt^d)$
with $\varphi
\geq 0$ is arbitrary.
\end{proof}

Next, we address the deterministic case, $\sigma=0$, and improve the supersolution property from Proposition \ref{supersolprop}.

\begin{corollary}\label{Cor:quad}
Suppose that Assumptions~\eqref{h}, \eqref{hh}, \eqref{g},
and \eqref{CC} hold with \(\sigma = 0\). If \(d\geq8\), assume further
that \(\alpha > \frac{d-4}{2} \) in Assumption~\eqref{CC}.  Then,
the weak solution, \((m,u)\), of \eqref{Pquad} given
by  Theorem~\ref{Thm:quad} satisfies
\begin{eqnarray*}
\begin{aligned}
\begin{cases}
\Big(\displaystyle-u- \frac{|Du|^2}{2} + V(x, m, h(\bfm))\Big)m = 0,\\
m - \div(m Du)  -1  =0,
\end{cases}
\end{aligned}
\end{eqnarray*}
\aev\ in \(\Tt^d\). 
\end{corollary}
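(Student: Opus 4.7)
My plan is to combine the Hamilton-Jacobi subsolution property delivered by Theorem~\ref{Thm:quad} with a reverse integral inequality obtained from the duality between the two equations in \eqref{Pquad}, in the spirit of Proposition~\ref{supersolprop} but argued directly in the quadratic framework. Since $\sigma=0$, Theorem~\ref{Thm:quad} already yields the Fokker-Planck equation $m-\div(mDu)=1$ a.e.\ in $\Tt^d$ and the Hamilton-Jacobi inequality $-u-\tfrac{|Du|^2}{2}+V(x,m,h(\bfm))\geq 0$ in $\Dd'(\Tt^d)$. I would first upgrade this distributional inequality to a pointwise a.e.\ inequality: the left-hand side lies in $L^1(\Tt^d)$ because $u\in W^{1,2}(\Tt^d)$, $|Du|^2\in L^1(\Tt^d)$, Assumption~\eqref{V:Fatou} provides the upper bound $\kappa_1(1+m^\alpha+|h(\bfm)|)$ for $V$, and Assumptions~\eqref{V:lb}--\eqref{V:m^a+1} provide a uniform lower bound. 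Multiplying by $m\geq 0$ then gives
\[
m\Bigl(-u-\tfrac{|Du|^2}{2}+V(x,m,h(\bfm))\Bigr)\geq 0\quad\text{a.e.\ in }\Tt^d,
\]
so the goal reduces to proving that the integral of this non-negative function is non-positive.

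Next I would derive a duality identity by testing the Fokker-Planck equation against $u$. Since $m\in L^{(\alpha+1)d/(d-2)}$ via Sobolev embedding applied to $m^{(\alpha+1)/2}\in W^{1,2}(\Tt^d)$, and since $\int_{\Tt^d} m|Du|^2\,\dx$ is finite (by Lemma~\ref{Lem:moreape} passed to the limit), mollifying $u$ and pairing with the a.e.\ equation yields
\[
\int_{\Tt^d} m u\,\dx=\int_{\Tt^d} u\,\dx-\int_{\Tt^d} m|Du|^2\,\dx.
\]
The hypothesis $\alpha>(d-4)/2$ for $d\geq 8$ is what secures enough integrability of $m$ and of $mDu$ for this smooth approximation to pass through all the relevant pairings.

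The complementary inequality will come from the regularized system. Multiplying the first equation in \eqref{RP} by $m^\epsi$ and the second by $u^\epsi$, adding and integrating over $\Tt^d$, the diagonal cross-terms $\pm\int m^\epsi u^\epsi\,\dx$ cancel, the drift-diffusion terms combine (after integration by parts on the torus) into $\tfrac12\int m^\epsi|Du^\epsi|^2\,\dx$, and all contributions from the regularizing operators assemble into non-negative quantities. Using that $\int m^\epsi\beta_{\epsi_1}(m^\epsi)\,\dx\to 0$, as in the proof of Proposition~\ref{supersolprop}, I obtain
\[
\tfrac12\int_{\Tt^d} m^\epsi|Du^\epsi|^2\,\dx+\int_{\Tt^d} m^\epsi V(x,m^\epsi,h(\bfm^\epsi))\,\dx-\int_{\Tt^d} u^\epsi\,\dx\leq o(1).
\]
Letting $\epsi\to 0^+$ along the subsequence from the proof of Theorem~\ref{Thm:quad}, I would invoke the lower semicontinuity of $\|\cdot\|_{L^2}^2$ for $(m^\epsi)^{1/2}Du^\epsi\weakly m^{1/2}Du$, Fatou's lemma for $\int m^\epsi V^\epsi\,\dx$ (made possible by the lower bound on $V$ and the a.e.\ convergences $m^\epsi\to m$, $h(\bfm^\epsi)\to h(\bfm)$ recorded in \eqref{pointwise e1}--\eqref{pointwise e2}), and the strong convergence $\int u^\epsi\,\dx\to\int u\,\dx$ to obtain
\[
\tfrac12\int_{\Tt^d} m|Du|^2\,\dx+\int_{\Tt^d} mV(x,m,h(\bfm))\,\dx\leq\int_{\Tt^d} u\,\dx.
\]

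Combining the two integral identities then yields
\[
\int_{\Tt^d} m\Bigl(-u-\tfrac{|Du|^2}{2}+V\Bigr)\,\dx=-\int_{\Tt^d} u\,\dx+\tfrac12\int_{\Tt^d} m|Du|^2\,\dx+\int_{\Tt^d} mV\,\dx\leq 0,
\]
which, together with the a.e.\ non-negativity of the integrand, forces $m(-u-\tfrac{|Du|^2}{2}+V)=0$ a.e.\ in $\Tt^d$. The second equation of the claimed system is the Fokker-Planck identity already supplied by Theorem~\ref{Thm:quad}. I expect the main obstacle to be the rigorous testing of the Fokker-Planck equation against $u\in W^{1,2}(\Tt^d)$, which is not a priori an admissible test function; this is precisely where the dimensional hypothesis $\alpha>(d-4)/2$ for $d\geq 8$ is needed, as it provides the integrability of $m$ and $mDu$ required to carry smooth approximations through to the limit.
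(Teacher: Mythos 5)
Your strategy---pair the subsolution inequality from Theorem~\ref{Thm:quad} with a reverse integral inequality coming from testing the system against $(m,u)$---is the right idea, and your complementary inequality
\[
\tfrac12\int_{\Tt^d} m|Du|^2\,\dx+\int_{\Tt^d} mV(x,m,h(\bfm))\,\dx\leq\int_{\Tt^d} u\,\dx
\]
is indeed obtained in the paper (this is essentially what Proposition~\ref{supersolprop} establishes, and it reappears inside the proof of Corollary~\ref{Cor:quad}). However, there is a genuine gap in your step 3, the duality identity
\[
\int_{\Tt^d} m u\,\dx=\int_{\Tt^d} u\,\dx-\int_{\Tt^d} m|Du|^2\,\dx .
\]
Testing the limit Fokker-Planck equation against $u$ via mollification requires showing
$\int_{\Tt^d} mDu\cdot Du_\delta\,\dx\to\int_{\Tt^d} m|Du|^2\,\dx$ where $u_\delta=u*\rho_\delta$. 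From the estimates you only know $m^{1/2}Du\in L^2(\Tt^d;\RR^d)$ (equivalently $mDu\in L^1$), not $mDu\in L^2$; the convergence $Du_\delta\to Du$ in $L^2$ is not compatible with the $m$-weighted pairing, and mollification does not commute with multiplication by $m^{1/2}$, so $m^{1/2}Du_\delta\to m^{1/2}Du$ in $L^2$ does not follow. Truncating $u$ hits the same wall: $D(u_N*\rho_\delta)\to Du_N$ in $L^2$, but without a uniform pointwise bound one cannot conclude $\int m|D(u_N*\rho_\delta)-Du_N|^2\to 0$. The dimensional hypothesis $\alpha>(d-4)/2$ for $d\geq 8$ does not resolve this: it controls $\|m\|_{L^p}$, not the interaction of mollification with the weight. (Your Hölder pairing $\int(m-1)u_\delta\to\int(m-1)u$ needs $\alpha\geq\frac{d-6}{d+2}$, which for $d=7$ is not guaranteed by $\alpha>0$ alone; this is a secondary issue.) The paper circumvents this difficulty entirely: the multiplication of the first regularized equation by $m^\epsi\varphi$ and the integration by parts are carried out while $(m^\epsi,u^\epsi)$ is still smooth, one first passes $\epsi_1\to 0^+$ (this is where $\alpha>(d-4)/2$ for $d\geq 8$ is used, to obtain strong $L^q$ convergence of $|Du^\epsi|^2 m^\epsi$, $q>1$), obtaining a pointwise a.e.\ identity at the $\epsi_2$-level, and only then sends $\epsi_2\to 0^+$ pointwise, splitting into the cases $m(x)=0$ and $m(x)>0$. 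That scheme never needs to pair the limiting Fokker-Planck equation with $u$.
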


\begin{proof}
Assume that \(\sigma=0\). 
In view of Theorem \ref{Thm:quad}, we are left to prove that
{\setlength\arraycolsep{0.5pt}
\begin{eqnarray}\label{HJ ae}
\Big(\displaystyle-u- \frac{|Du|^2}{2}
+ V(x,m, h(\bfm))
\Big)m = 0 \enspace \hbox{
\aev\
in \(\Tt^d\).}
\end{eqnarray}}%

In what follows, either \(d\leq 7\) and \(\alpha>0\)
or \(d\geq8\) and \(\alpha > \frac{d-4}{2} \).
Let \(\epsi = (\epsi_1,
\epsi_2) \in (0,1)^2\) and $(m^\epsi, u^\epsi) \in
C^\infty(\Tt^d)\times C^\infty(\Tt^d)$ with
$\inf_{\Tt^d}m^\epsi>0$
 be as in the proof of Theorem \ref{Thm:quad}. The arguments 
in the Step~4 of that proof show that by letting
\(\epsi_1\to 0^+\) in \eqref{72 e2},  we  have
\begin{equation*}
\int_{\Tt^d} \Big(- u^{\epsi_2 } - \frac{|Du^{\epsi_2}|^2}{2}
   +  V(x,m^{\epsi_2},h(\bfm^{\epsi_2}))+
\epsi_2
m^{\epsi_2}  - \epsi_2
 \Delta m^{\epsi_2} \Big) 
 \varphi\,\dx \geq 0
 \end{equation*}
for all $\varphi \in C^{\infty} (\Tt^d)$, with
\(\varphi \geq 0\). Hence, because \(m^{\epsi_2} \geq 0\) \aev\ in \(\Tt^d\),
we conclude that
\begin{equation}
\label{HJ ae geq}
\Big(- u^{\epsi_2 } - \frac{|Du^{\epsi_2}|^2}{2}
   +  V(x,m^{\epsi_2},h(\bfm^{\epsi_2}))+
\epsi_2
m^{\epsi_2}  - \epsi_2
 \Delta m^{\epsi_2} \Big) m^{\epsi_2}
\geq 0
\end{equation}
\aev\ in \(\Tt^d\).

Next, we prove the converse inequality. Let $\varphi \in C^{\infty} (\Tt^d)$ with \(\varphi \geq 0\). Multiplying   \eqref{eqnme0}  by \(m^\epsi\varphi\),
 integrating the resulting equality
over \(\Tt^d\),
and using integration by parts,
we get
{\setlength\arraycolsep{0.5pt}%
\begin{eqnarray}
&& - \int_{\Tt^d} u^\epsi m^\epsi\varphi
\,\dx - 
\int_{\Tt^d}\frac{|Du^\epsi|^2}{2}
 m^\epsi \varphi\,\dx
  + \int_{\Tt^d}  V(x,  m^\epsi,h(\bfm^\epsi))m^\epsi\varphi
\,\dx  
\nonumber \\
 &&\qquad+ \epsi_1
\int_{\Tt^d} (m^\epsi)^2\varphi\,\dx
 + \epsi_1
\int_{\Tt^d} \Delta^{p}m^\epsi
\Delta^p (m^\epsi\varphi)\,\dx
 +  \int_{\Tt^d} \beta_{\epsi_1}(m^\epsi)
m^\epsi \varphi\,\dx \nonumber\\
&&\qquad + \epsi_2
\int_{\Tt^d} (m^\epsi)^2 \varphi\,\dx
 - \epsi_2
\int_{\Tt^d} \Delta m^\epsi m^\epsi  \varphi\,\dx
=0.\label{HJ me}
\end{eqnarray}}

Using \eqref{ue wrt e1} and \eqref{me wrt e1} together with the embeddings
mentioned at the beginning of Step~1 of the previous
proof,
we have 
{\setlength\arraycolsep{0.5pt}%
\begin{eqnarray}
\label{HJ lim e11}
&&\lim_{\epsi_1 \to 0^+} \bigg(
- \int_{\Tt^d} u^\epsi m^\epsi\varphi
\,\dx     + \epsi_2
\int_{\Tt^d} (m^\epsi)^2 \varphi\,\dx
 - \epsi_2
\int_{\Tt^d} \Delta m^\epsi m^\epsi
 \varphi\,\dx \bigg)\nonumber \\
 &&\qquad = - \int_{\Tt^d} u^{\epsi_2} m^{\epsi_2}\varphi
\,\dx   
  + \epsi_2
\int_{\Tt^d} (m^{\epsi_2})^2 \varphi\,\dx
 - \epsi_2
\int_{\Tt^d} \Delta m^{\epsi_2} m^{\epsi_2}
 \varphi\,\dx. 
\end{eqnarray}}%

From Assumptions~\eqref{V:lb}, \eqref{G:sumg},
and \eqref{V:m^a+1}, it follows that
\begin{equation*}
V(x, m^{\epsi}, h(\bfm^{\epsi})) \geq \frac{1}{\kappa_2}
(m^\epsi)^\alpha - \kappa_2,
\end{equation*}
for some positive constant, \(\kappa_2\), independent
of \(\epsi\). Thus, because \(m^\epsi \varphi
\geq 0\) \aev\ in \(\Tt^d\), Fatou's lemma, the
continuity of \(V\), and the convergences \eqref{mea wrt e1} and \eqref{pointwise e1} imply that
\begin{equation}
\label{HJ lim e2'}
  \int_{\Tt^d}  V(x, m^{\epsi_2}, h(\bfm^{\epsi_2}))
 m^{\epsi_2}\varphi
\,\dx \leq \liminf_{\epsi_1 \to 0^+}  \int_{\Tt^d}  V(x,
m^{\epsi}, h(\bfm^{\epsi}))
 m^\epsi\varphi
\,\dx .
\end{equation}

By definition, \(\beta_{\epsi_1}(\cdot)
\leq 0\) and \(\beta_{\epsi_1}(s)
= 0\) if $s\geq \epsi_1$, and thus
{\setlength\arraycolsep{0.5pt}%
\begin{eqnarray*}
&&\bigg|  \int_{\Tt^d} \beta_{\epsi_1}(m^\epsi)
m^\epsi \varphi\,\dx \bigg| = \bigg|  \int_{\{x\in
\Tt^d\!:\, m^\epsi(x) \leq \epsi_1\}} \beta_{\epsi_1}
(m^\epsi)
m^\epsi \varphi\,\dx \bigg| \leq \epsi_1 \Vert
\varphi \Vert_{L^\infty(\Tt^d)} C,
\end{eqnarray*}}%
where \(C\) is the constant given by 
Lemma~\ref{Lem:moreape}. Hence,
{\setlength\arraycolsep{0.5pt}%
\begin{eqnarray}
\label{HJ lim e12}
&& \lim_{\epsi_1 \to 0^+} \bigg(   \epsi_1
\int_{\Tt^d} (m^\epsi)^2\varphi\,\dx
 + \epsi_1
\int_{\Tt^d} \Delta^{p}m^\epsi
\Delta^p (m^\epsi\varphi)\,\dx
 +\int_{\Tt^d} \beta_{\epsi_1}(m^\epsi)
m^\epsi \varphi\,\dx  
\bigg)
=0,\qquad
\end{eqnarray}}%
where we also used \eqref{sme wrt e1}.

Next, we prove that
{\setlength\arraycolsep{0.5pt}%
\begin{eqnarray}\label{HJ lim e13}
&&\lim_{\epsi_1 \to 0^+}  
\int_{\Tt^d}\frac{|Du^\epsi|^2}{2}
 m^\epsi \varphi\,\dx = \int_{\Tt^d}
 \frac{|Du^{\epsi_2}|^2}{2}
 m^{\epsi_2} \varphi\,\dx.
\end{eqnarray}}%
In view of the embeddings mentioned at the beginning
of Step~1 of the previous proof, we have 
{\setlength\arraycolsep{0.5pt}%
\begin{eqnarray}
&&m^\epsi \to m^{\epsi_2} \hbox{ 
in } L^p(\Tt^d) \hbox{ as } \epsi_1 \to
0^+ \hbox{ and  for all } p\in (1,(2^*)^*), \label{mdss} \\
&&m^\epsi \to m^{\epsi_2} \hbox{ 
in } L^r(\Tt^d) \hbox{ as } \epsi_1 \to
0^+ \hbox{ and  for all } r\in \Big ( 1, \frac{(\alpha+1)2^*}{2}\Big), \label{mads}
\\
&& |Du^\epsi|^2 \to |Du^{\epsi_2}|^2 \hbox{   in } L^{s}(\Tt^d) \hbox{ as } \epsi_1 \to 0^+\hbox{ and  for all } s\in \Big (1,\frac{2^*}{2}\Big).
\label{Dusq}
\end{eqnarray}}%
If \(d \leq 7 \), then \((\frac{2^*}{2})' < (2^*)^*\).
Accordingly, for some $p<\frac{2^*}{2}$ in \eqref{mdss} and some $s<\frac{2^*}{2}$ in \eqref{Dusq}, we get  $|Du^{\epsi}|^2  m^{\epsi}\to |Du^{\epsi_2}|^2  m^{\epsi_2}$ strongly in  $L^q(\Tt^d)$ with \(\frac{1}{q}
= \frac{1}{r} + \frac{1}{s}\) and $q>1$. 
Moreover, if  \(d\geq 8\) and \(\alpha > \frac{d-4}{2} \),
then there is \(q>1\) such that \(\frac{1}{q}
= \frac{1}{r} + \frac{1}{s}\), where \(r\) and
\(s\) are as in \eqref{mads} and \eqref{Dusq},
respectively. Therefore, we get again $|Du^{\epsi}|^2  m^{\epsi}\to |Du^{\epsi_2}|^2
 m^{\epsi_2}$ strongly in  $L^q(\Tt^d)$.

%
Finally, by \eqref{HJ me}, \eqref{HJ lim e11},
\eqref{HJ lim e2'}, \eqref{HJ lim e12}, and \eqref{HJ lim e13}, we
have
{\setlength\arraycolsep{0.5pt}%
\begin{eqnarray*}
&& \int_{\Tt^d} \Big(- u^{\epsi_2 } - \frac{|Du^{\epsi_2}|^2}{2}
 +  V(x,m^{\epsi_2},h(\bfm^{\epsi_2}))+ \epsi_2
m^{\epsi_2}  - \epsi_2
 \Delta m^{\epsi_2} \Big) m^{\epsi_2}
 \varphi\,\dx \leq 0.
\end{eqnarray*}}%
Because \(\varphi \in C^\infty(\Tt^d)\) with \(\varphi
\geq 0\) is arbitrary,
recalling \eqref{HJ ae geq}, it follows that
{\setlength\arraycolsep{0.5pt}%
\begin{eqnarray}\label{HJae e2}
&& \Big(- u^{\epsi_2 } - \frac{|Du^{\epsi_2}|^2}{2}
  +V(x,m^{\epsi_2},h(\bfm^{\epsi_2}))+ \epsi_2
m^{\epsi_2}  - \epsi_2
 \Delta m^{\epsi_2} \Big) m^{\epsi_2} = 0 \enspace
 \hbox{  \aev\ in } \Tt^d.\qquad
\end{eqnarray}}%

Let \(A:= \{ x \in \Tt^d \!: \, \eqref{pointwise e2} \hbox{ and \(m(x) \geq 0\) hold}\}\). Then, \(\Ll^d(\Tt^d \backslash
A) =0\). 

If \(x \in A\) is such that \(m(x) =0\), then clearly
{\setlength\arraycolsep{0.5pt}%
\begin{eqnarray}\label{HJae1}
&&\Big(\displaystyle-u(x)- \frac{|Du(x)|^2}{2}
+ V(x,m(x),h(\bfm)(x)) \Big)m(x) = 0.
\end{eqnarray}}%

If \(x \in A\) is such that \(m(x) >0\), then \(m^{\epsi_2}(x)>0\)
for all sufficiently small  \(\epsi_2\); moreover,
{\setlength\arraycolsep{0.5pt}%
\begin{eqnarray*}
\lim_{\epsi_2 \to 0^+} |Du^{\epsi_2}(x)|^2 m^{\epsi_2}(x)
&&= \lim_{\epsi_2 \to 0^+} |Du^{\epsi_2}(x)|^2
(m^{\epsi_2})^{{\alpha + 1}}(x) (m^{\epsi_2})^{{1-\alpha }}(x)\\
&& = |Du(x)|^2 m(x),
\end{eqnarray*}}%
which, together with \eqref{pointwise e2} and \eqref{HJae
e2}, proves that \eqref{HJae1}  holds for all such
$x$. This completes the proof
of \eqref{HJ ae}.
\end{proof}

%

\section{Final remarks}
\label{final}

The monotonicity method developed here is a powerful and flexible way to study 
MFGs. Several extensions can be considered using similar ideas, including
the standard stationary MFG:
\[
\begin{cases}
H(x, Du, D^2 u, m,h(\bfm))
=\Hh,\\
 - \div(m D_p H(x, Du, D^2 u,m, h(\bfm)))
+ \big(m D_{M_{ij}} H (x, Du, D^2 u, m,h(\bfm)) \big)_{x_i
        x_j} =0. 
\end{cases}
\]
In addition, there are several areas where future developments are likely. 
First, we foresee improvements in the regularity theory 
for weak solutions. Additional regularity is essential  to
prove the uniqueness of solutions. 
Second, the congestion problems examined here
may enjoy further regularity properties;  this topic
 was not explored in the present paper. 
Third, time-dependent MFGs
are a natural application of our methods. Here, 
we need to develop a 
different regularization method  because of the initial-terminal boundary conditions. 
Moreover, the regularity theory for these problems may differ substantially 
from the stationary case. Finally, our results
may   be of independent interest in the  calculus of variations 
as certain MFGs are the Euler-Lagrange equation of integral functionals. These functionals are convex but, in many cases, non-coercive,
and their study presents substantial challenges; see, for example, \cite{GM}.

\bibliographystyle{plain}

\bibliography{fgVarIneq}

\end{document}